\newtheorem{thm}{Theorem}[section]
\newtheorem{cor}[thm]{Corollary}
\newtheorem{lem}[thm]{Lemma}
\theoremstyle{definition}
\theoremstyle{example}
\newtheorem{exam}{Example}
\theoremstyle{remark}
\newtheorem{rem}[thm]{Remark}
\numberwithin{equation}{section}
\begin{document}

\title[Binomial sums]{Parametric  binomial sums involving harmonic numbers}%
\author{necdet bat{\i}r}%
\address{department of  mathematics, nev{\c{s}}ehir hbv university, nev{\c{s}}ehir, 50300 turkey}%
\email{nbatir@hotmail.com}%

%\thanks{}%
\subjclass[2000]{Primary 05A10, 05A19; Secondary 33C20}%
\keywords{Binomial sums, binomial coefficients, Riemann zeta function, gamma function, combinatorial identities, harmonic numbers}%

%\dedicatory{}%
%\commby{}%
% ----------------------------------------------------------------
\begin{abstract} We present explicit formulas for the following family of parametric binomial sums involving harmonic numbers for $p=0,1,2$ and $|t|\leq1$.
$$
\sum_{k=1}^{\infty}\frac{H_{k-1}t^k}{k^p\binom{n+k}{k}}\quad \mbox{and}\quad \sum_{k=1}^{\infty}\frac{t^k}{k^p\binom{n+k}{k}}.
$$
We also generalize the following  relation between the Stirling numbers of the first kind and the Riemann zeta function to polygamma function and give some applications.
$$
\zeta(n+1)=\sum_{k=n}^{\infty}\frac{s(k,n)}{kk!}, \quad n=1,2,3,... .
$$
As examples,
\begin{equation*}
\zeta(3)=\frac{1}{7}\sum_{k=1}^{\infty}\frac{H_{k-1}4^k}{k^2\binom{2k}{k}},\quad \mbox{and}\quad \zeta(3)=\frac{8}{7}+\frac{1}{7}\sum_{k=1}^{\infty}\frac{H_{k-1}4^k}{k^2(2k+1)\binom{2k}{k}},
\end{equation*}
which are new series representations for the Ap\'{e}ry constant $\zeta(3)$.
\end{abstract}
\maketitle
% ----------------------------------------------------------------
\section{introduction}
First, we recall some tools we need. The gamma and digamma  functions denoted by  $\Gamma$  and $\psi$, respectively, are defined by
$$
\Gamma(x)=\int_{0}^\infty u^{x-1}e^{-u}du\quad \mbox{and}\quad \psi(x)=\frac{\Gamma'(x)}{\Gamma(x)},\quad (x>0).
$$
The gamma function satisfies the Legendre's duplication formula
\begin{equation}\label{e:1}
\Gamma(n+1/2)=\frac{(2n)!\sqrt{\pi}}{4^nn!},\quad n=0,1,2,3,... .
\end{equation}
The functions $\psi,\psi',\psi'',...$ are known to be the polygamma functions.
$\psi^{(0)}(x)=\psi(x)$.
The polygamma functions satisfy the following fundamental functional equation:
\begin{equation}\label{e:2}
\psi^{(n)}(x+1)-\psi^{(n)}(x)=\frac{(-1)^nn!}{x^{n+1}},\quad n=0,1,2,... ,
\end{equation}
and have the integral representation
\begin{equation}\label{e:3}
\psi^{(n)}(s+1)=(-1)^{n+1}\int_{0}^{\infty}\frac{t^ne^{-(s+1)t}}{1-e^{-t}}dt,
\end{equation}
see \cite{16}. The polygamma functions satisfy
\begin{equation}\label{e:4}
\psi^{(n)}(1)=(-1)^{n-1}n!\zeta(n+1) \quad \mbox{and}\quad \psi^{(n)}(1/2)=(-1)^{n+1}n!\left(2^{n+1}-1\right)\zeta(n+1),
\end{equation}
where $\zeta(s)=\sum_{k=1}^{\infty}\frac{1}{k^s}$\quad $(\Re(s)>1)$ is the Riemann zeta function. The beta function $B(s,t)$  is defined by
$$
B(s,t)=\int\limits_{0}^{1}u^{s-1}(1-u)^{t-1}du\quad (\Re(s)>0\,,\Re(t)>0).
$$
The gamma and beta functions are  related by
\begin{equation}\label{e:5} %=================================== Equation (1.11)
B(s,t)=\frac{\Gamma(s)\Gamma(t)}{\Gamma(s+t)};
\end{equation}
see \cite[p.251]{11}.
A generalized binomial coefficient  $\binom{s}{t}$ ($s,t\in\mathbb{C}$) is defined, in terms of the classical gamma function, by
\begin{equation}\label{e:6}
\binom{s}{t}=\frac{\Gamma(s+1)}{\Gamma(t+1)\Gamma(s-t+1)},\quad(s,t\in\mathbb{C}).
\end{equation}
For $s\in\mathbb{C}$, a generalized harmonic number $H_n^{(s)}$ of order $s$ is defined by
\begin{equation}\label{e:7} %=================================== Equation (1.1)
H_n^{(s)}=\sum\limits_{k=1}^{n}\frac{1}{k^s}, \quad \mbox{and} \quad H_n^{(1)}=H_n;
\end{equation}
see \cite{28}. Here and throughout, an empty sum is understood to be nil and so $H_0{(s)}=0$. We also use the principal branch of  logarithmic function $\log z$ for $z\in\mathbb{C}$. Recall the relation between harmonic numbers and the digamma function: When $n\in\mathbb{N}$, $\gamma+\psi(n+1)=H_n$.
The generalized harmonic numbers and the polygamma functions are related with
\begin{equation}\label{e:8}
H_n^{(m)}=\zeta(m+1)+\frac{(-1)^m}{m!}\psi^{(m)}(n+1),\quad n\in \mathbb{N};
\end{equation}
see \cite{25}.
Here and throughout, an empty sum is understood to be nil and so $H_0{(s)}=0$.
The Stirling numbers of the first kind are defined by the generating function of the falling factorial $(x)_n=x(x-1)(x-2)...(x-(n-1))$ $(n\geq 0)$, that is,
$$
\sum_{k=0}^{\infty}s(n,k)x^k=(x)_n.
$$
Some special values include
\begin{align}\label{e:9}
s(n,0)&=\delta_{n0},\quad s(n,1)=(-1)^{n-1}(n-1)!,\quad s(n,2)=(-1)^{n}(n-1)!H_{n-1},\nonumber\\
s(n,3)&=\frac{1}{2}(-1)^{n-1}(n-1)!\left(H_{n-1}^2-H_{n-1}^{(2)}\right),\notag\\
s(n,4)&=\frac{1}{6}(-1)^n(n-1)!\left(H_{n-1}^3-3H_{n-1}H_{n-1}^{(2)}+2H_{n-1}^{(3)}\right).
\end{align}
Here $\delta_{ij}$ is the Kronecker delta. For basic properties, closed form evaluations, and generalizations  for many other special numbers and polynomials, such as, Stirling numbers of the second kind, Bernoulli and Euler polynomials, Eulerian and Catalan numbers, Daehee numbers we refer the interested readers to Comtet \cite{9}, Da\v{g}l{\i} \cite{Dag}, Gun and \c{S}im\c{s}ek \cite{Gun}, and Kim and Kim \cite{Kim}, and the papers cited there.

The polylogarithmic function $Li_n(x)$ is defined by
$$
Li_n(x)=\sum_{k=1}^{\infty}\frac{x^k}{k^n},
$$
where here  requires $-1\leq x<1$ if $n=1$ and $|x|\leq 1$ if $n>1$. In particular, the dilogarithm function $Li_2$ satisfies the  functional equation
\begin{equation}\label{e:10}
Li_2(x)+Li_2(1-x)=\zeta(2)-\log x\log(1-x);
\end{equation}
see \cite{15} for other basic properties of polylogarithm functions. It is well known that binomial coefficients play an important role in various subjects such as combinatorics, number theory, probability, and graph theory. In the literature there exist many results dealing with sums involving reciprocals of the binomial coefficients. Particularly, sums involving both reciprocals of the binomial coefficients and harmonic numbers  have been receiving much attention. For example, in \cite{9} the author gives closed-form identities for the sums
$$
\sum_{k=1}^{\infty}\frac{(-1)^{k+1}H_k}{k^p\binom{n+k}{k}^2},\quad p=0,1.
$$
In \cite{20} the author gives explicit formulas for the following sums:
$$
\sum_{k=1}^{\infty}\frac{(-1)^{k+1}H_k^2}{k^p\binom{n+k}{k}^2},\quad p=0,1.
$$
In \cite{21} the author gives explicit identities for the following sum for $t=1$ and $1/2$:
$$
\sum_{k=1}^{\infty}\frac{t^{k}H_k}{\binom{n+k}{k}}.
$$
Cloitre as reported in \cite{29} gives the formula
$$
\sum_{k=1}^{\infty}\frac{H_k}{\binom{n+k}{k}}=\frac{n}{(n-1)^2}.
$$
In \cite{18} the author gives some complicated formulas involving generalized harmonic numbers and binomial coefficients for
$$
\sum_{k=1}^{\infty}\frac{(-1)^{k+1}H_k}{k^p\binom{n+k}{k}},\quad p=0,1,2.
$$
In \cite{30} the authors give elegant identities for the following sums:
$$
\sum_{k=1}^{\infty}\frac{(-1)^k}{\binom{n+k}{k}}\quad \mbox{and}\quad \sum_{k=1}^{\infty}\frac{1}{2^k\binom{n+k}{k}}
$$
For many other infinite series involving the harmonic numbers and binomial coefficients please refer to  [3], [4], [22]-[28], [30]-[32], [33]-[35] and the references cited there.
Motivated by these results mentioned above in this paper we consider the following family of sums:
$$
\sum_{k=1}^{\infty}\frac{t^k}{k^p\binom{n+k}{k}} \quad\mbox{and} \quad \sum_{k=1}^{\infty}\frac{H_{k-1}t^k}{k^p\binom{n+k}{k}},\quad p=0,1,2,
$$
for $|t|<1$. While there are many results for sums of the types given above there are almost no results for the power series like these. It is a well-known fact that evaluating  the sum of this kind of   series explicitly is not easy, and usually it is done by using the gamma-beta functions identity or a combinatorial formula for the reciprocal of the pochhammer symbol. This formula has been used extensively by A. Sofo in his works. But we believe that using proper integrals are more useful for studying these sums and therefore we employ some logarithmic integrals and by this way we provide elegant explicit formulas for these sums.

In Jordan's book \cite[pp. 166, 194-195]{14} the following interesting identity, which establishes an interesting relation between $\zeta(n)$ and $s(n,k)$, was recorded:
\begin{equation}\label{e:11}
\zeta(n+1)=\sum_{k=n}^{\infty}\frac{s(k,n)}{kk!}, \quad n=1,2,3,... .
\end{equation}
This formula was rediscovered by some authors recently; see, \cite{17}, \cite{16}, \cite[Sec. 5]{1} and \cite[p. 412]{7}. Our second aim in this work  is to generalize this formula to polygamma functions and give some applications.

We need the following useful lemmas. The first lemma is proved in \cite{5}.
\begin{lem} Let $(a_k)_{k\geq1}$ and $(b_k)_{k\geq1}$ be any two sequences of real or complex numbers. Then we have
\begin{equation}\label{e:12}
\sum\limits_{k=1}^{n}a_k\sum\limits_{j=1}^{k}b_j=\sum\limits_{p=0}^{n-1}\sum\limits_{k=1}^{n-p}b_ka_{p+k}.
\end{equation}
\end{lem}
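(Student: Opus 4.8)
The plan is to recognize that both sides of \eqref{e:12} are merely two different enumerations of the same family of products $a_kb_j$ taken over the triangular index set $\set{(j,k):1\leq j\leq k\leq n}$, and then to exhibit the correspondence between the two enumerations by a change of summation variable.

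First I would collapse the left-hand side into a single double sum, writing
\begin{equation*}
\sum_{k=1}^{n}a_k\sum_{j=1}^{k}b_j=\sum_{k=1}^{n}\sum_{j=1}^{k}a_kb_j=\sum_{1\leq j\leq k\leq n}a_kb_j,
\end{equation*}
so that $k$ is the subscript of $a$, the subscript $j$ of $b$ satisfies $j\leq k$, and the triangular array is being summed row by row. Next I would analyse the right-hand side diagonal by diagonal. For fixed $p$ the inner summand $b_ka_{p+k}$ has $b$-subscript $k$ and $a$-subscript $p+k$, with $k$ running from $1$ to $n-p$ and $p$ running from $0$ to $n-1$. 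Introducing $j=k$ and $m=p+k$, I would check that the ranges $0\leq p\leq n-1$ and $1\leq k\leq n-p$ correspond exactly to $1\leq j\leq m\leq n$, the quantity $p=m-j$ now recording the gap between the two subscripts. Hence
\begin{equation*}
\sum_{p=0}^{n-1}\sum_{k=1}^{n-p}b_ka_{p+k}=\sum_{1\leq j\leq m\leq n}a_mb_j,
\end{equation*}
and after renaming $m$ to $k$ this is identical to the triangular sum obtained above. Comparing the two displays yields \eqref{e:12}.

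The argument is elementary and I do not expect a genuine obstacle; the only delicate point is the bookkeeping in the substitution $m=p+k$, that is, confirming that the bounds $0\leq p\leq n-1$, $1\leq k\leq n-p$ sweep out the triangle $1\leq j\leq m\leq n$ exactly once, omitting nothing and repeating nothing. Should one prefer to bypass the reindexing entirely, an equally short alternative is induction on $n$: a direct computation shows that each side increases by precisely $a_{n+1}\sum_{j=1}^{n+1}b_j$ when $n$ is replaced by $n+1$, which settles the inductive step once the trivial base case $n=1$ is checked.
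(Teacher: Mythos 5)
Your argument is correct: the bijection $(p,k)\mapsto(j,m)=(k,p+k)$ between $\set{(p,k): 0\leq p\leq n-1,\ 1\leq k\leq n-p}$ and the triangle $\set{(j,m): 1\leq j\leq m\leq n}$ is exactly right, and both sides of \eqref{e:12} collapse to $\sum_{1\leq j\leq k\leq n}a_kb_j$. Your induction alternative also checks out: passing from $n$ to $n+1$ adds $b_1a_{n+1}$ (the new diagonal $p=n$) plus $\sum_{p=0}^{n-1}b_{n+1-p}a_{n+1}$ (the new endpoint $k=n-p+1$ of each old diagonal), which totals $a_{n+1}\sum_{j=1}^{n+1}b_j$, matching the increment of the left-hand side. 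One point of comparison is worth noting: the paper does not actually prove this lemma at all; it is quoted from reference [5], so there is no in-paper argument to measure yours against. Your reindexing proof is the natural self-contained one (it is a finite rearrangement, so no convergence issues arise and the sequences may indeed be arbitrary), and it makes transparent why the paper can let $n\to\infty$ afterwards to obtain \eqref{e:13} only under the additional hypothesis that the relevant series converge, since in the infinite case the rearrangement is no longer automatic.
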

If we let $n\to \infty$ here, we get
\begin{equation}\label{e:13}
\sum\limits_{k=1}^{\infty}a_k\sum\limits_{j=1}^{k}b_j=\sum\limits_{p=0}^{\infty}\sum\limits_{k=1}^{\infty}b_ka_{p+k},
\end{equation}
provided that all involved series are convergent.
\begin{lem} For $n\in\mathbb{N}$ we have
\begin{equation*}
\sum_{k=1}^{n}\frac{(-a)^k}{k}\sum_{J=1}^{k}\frac{(-1/a)^j}{j}=H_n^{(2)}+\sum_{k=1}^{n}\frac{(H_k+H_{n-k}-H_n)(-a)^k}{k}.
\end{equation*}
\end{lem}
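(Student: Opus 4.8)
The plan is to collapse the double sum into a single sum by applying the rearrangement identity \eqref{e:12} from the previous lemma, and then to evaluate the resulting inner sums by partial fractions. First I would set $a_k=(-a)^k/k$ and $b_j=(-1/a)^j/j$, so that the left-hand side is precisely $\sum_{k=1}^n a_k\sum_{j=1}^k b_j$. Invoking \eqref{e:12} rewrites this as $\sum_{p=0}^{n-1}\sum_{k=1}^{n-p}b_k a_{p+k}$; since $n$ is a fixed positive integer there are no convergence concerns and the finite identity applies directly.

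The key simplification is that the troublesome powers of $a$ cancel. Indeed,
$$
b_k a_{p+k}=\frac{(-1/a)^k}{k}\cdot\frac{(-a)^{p+k}}{p+k}=\frac{(-1)^{p+2k}a^{p}}{k(p+k)}=\frac{(-a)^p}{k(p+k)},
$$
so the double sum collapses to $\sum_{p=0}^{n-1}(-a)^p\sum_{k=1}^{n-p}\frac{1}{k(p+k)}$, with the parameter $a$ now entering only through the clean prefactor $(-a)^p$.

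Next I would evaluate the inner sum $S_p:=\sum_{k=1}^{n-p}\frac{1}{k(p+k)}$. The term $p=0$ gives $S_0=\sum_{k=1}^n 1/k^2=H_n^{(2)}$, which supplies the leading term of the claimed right-hand side. For $p\geq1$ the partial-fraction decomposition $\frac{1}{k(p+k)}=\frac1p\big(\frac1k-\frac1{p+k}\big)$ separates the summand into two harmonic pieces: $\sum_{k=1}^{n-p}\frac1k=H_{n-p}$ and $\sum_{k=1}^{n-p}\frac{1}{p+k}=H_n-H_p$, whence $S_p=\frac{H_p+H_{n-p}-H_n}{p}$. Substituting back yields
$$
\sum_{p=0}^{n-1}(-a)^p S_p=H_n^{(2)}+\sum_{p=1}^{n-1}\frac{(H_p+H_{n-p}-H_n)(-a)^p}{p}.
$$

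Finally I would reconcile the upper limit: the claimed sum runs to $k=n$, whereas the computation produces a sum to $n-1$. But the $k=n$ summand on the right-hand side equals $\frac{(H_n+H_0-H_n)(-a)^n}{n}=0$ because $H_0=0$, so extending the sum from $n-1$ to $n$ changes nothing and the two expressions coincide. I expect the only points genuinely requiring care to be the cancellation of the $a^{\pm k}$ factors in the product $b_k a_{p+k}$ and this boundary bookkeeping between the limits $n-1$ and $n$; the partial-fraction evaluation itself is routine.
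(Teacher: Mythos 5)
Your proof is correct and follows exactly the route the paper intends: the paper's proof consists of the single line ``apply \eqref{e:12} with $a_k=(-a)^k/k$ and $b_j=(-1/a)^j/j$,'' and your argument is precisely this application, with the cancellation $b_ka_{p+k}=(-a)^p/\bigl(k(p+k)\bigr)$, the partial-fraction evaluation of $\sum_{k=1}^{n-p}\frac{1}{k(p+k)}$, and the vanishing $k=n$ term written out in full. Nothing is missing; you have simply supplied the routine computation the paper leaves implicit.
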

\begin{proof}
The proof follows from (\ref{e:12}) with $a_k=(-a)^k/k$ and $b_j=(-1/a)^j/j$.
\end{proof}
The following lemma is due to Hassani and Rahimpour [13].
\begin{lem} Let $(a_{ij})$ be any double sequence. Then we have
\begin{equation*}
\sum_{j,k=1}^{n}a_{jk}=\sum_{k=1}^n\sum_{j=1}^{k}(a_{jk}+a_{kj})-\sum_{k=1}^{n}a_{kk}.
\end{equation*}
\end{lem}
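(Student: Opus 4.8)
The plan is to prove the identity by a pure bookkeeping argument: since all sums are finite, nothing analytic is required, and the whole statement amounts to partitioning the index square $\{(j,k):1\le j,k\le n\}$ according to whether $j<k$, $j>k$, or $j=k$, and then recognizing the right-hand side as a reassembly of these three pieces in which the diagonal has been counted twice. First I would record the decomposition of the full double sum into its strictly lower part, its strictly upper part, and its diagonal:
\begin{equation*}
\sum_{j,k=1}^{n}a_{jk}=\sum_{1\le j<k\le n}a_{jk}+\sum_{1\le k<j\le n}a_{jk}+\sum_{k=1}^{n}a_{kk}.
\end{equation*}

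Next I would analyze the inner sum $\sum_{j=1}^{k}$ appearing on the right, which ranges exactly over $j\le k$. Summed over $1\le j\le k\le n$, the term $a_{jk}$ reproduces the strictly lower part together with the diagonal, while the term $a_{kj}$ over the same range, after swapping the names of the two indices, reproduces the strictly upper part together with the diagonal. Adding these two contributions therefore yields
\begin{equation*}
\sum_{k=1}^{n}\sum_{j=1}^{k}\bigl(a_{jk}+a_{kj}\bigr)=\sum_{j,k=1}^{n}a_{jk}+\sum_{k=1}^{n}a_{kk},
\end{equation*}
the diagonal being picked up once by each of the two summands and hence appearing twice. Rearranging this single equation gives the claimed identity.

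The only point that genuinely requires care — and so the closest thing here to an obstacle — is the reindexing of the second term. One must verify that the index set $\{(k,j):1\le j\le k\le n\}$, read as ordered pairs (first index, second index), is precisely $\{(p,q):1\le q\le p\le n\}$, i.e. the strictly upper triangle together with the diagonal, and that the diagonal $j=k$ feeds into both $a_{jk}$ and $a_{kj}$ simultaneously. Once this matching is checked, the coefficient $1$ on each off-diagonal entry and the coefficient $2$ on each diagonal entry on the right are exactly accounted for, and the single subtraction of $\sum_{k=1}^{n}a_{kk}$ corrects the double count, completing the proof.
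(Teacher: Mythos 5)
Your proof is correct and complete. Note, however, that the paper itself offers no proof of this lemma at all: it simply states that the result is due to Hassani and Rahimpour and cites their paper, so there is no internal argument to compare yours against. Your double-counting argument is the standard (and essentially the only natural) proof: you split the index square into the strict triangles $j<k$, $j>k$ and the diagonal $j=k$, observe that $\sum_{k=1}^{n}\sum_{j=1}^{k}a_{jk}$ covers one strict triangle plus the diagonal while $\sum_{k=1}^{n}\sum_{j=1}^{k}a_{kj}$ covers the other strict triangle plus the diagonal, and subtract the once-overcounted diagonal. The one delicate step, the reindexing that identifies $\left\{(k,j):1\le j\le k\le n\right\}$ with $\left\{(p,q):1\le q\le p\le n\right\}$, is handled explicitly and correctly, so the argument stands as a self-contained proof of the cited result.
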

\begin{lem} Let $n\in\mathbb{N}$  and $a\in\mathbb{C}$. Then, we have
\begin{align}\label{e:14}
&\sum_{k=1}^{n}\frac{(-a)^k}{k}\sum_{j=1}^{k}\frac{1}{j}\sum_{p=1}^{j}\frac{(-1/a)^p}{p}\notag\\
&=\sum_{p=0}^{n-1}\bigg\{\sum_{k=1}^{n-p}\bigg(\frac{H_{p+k}-H_k}{(p+k)k}+\frac{1}{k^2(p+k)}\bigg)\bigg\}(-a)^p.
\end{align}
\end{lem}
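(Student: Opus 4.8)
The plan is to avoid the nested-summation rearrangement of Lemma~1.1 and instead expand the left-hand side directly as a single triple sum, then collect terms by the power of $a$ that each carries. First I would rename the innermost summation index from $p$ to $q$ (to free up the symbol $p$ for the outer index on the right-hand side) and write the left-hand side as a sum over the tetrahedral region $1\le q\le j\le k\le n$,
$$
S=\sum_{1\le q\le j\le k\le n}\frac{(-a)^k}{k}\cdot\frac1j\cdot\frac{(-1/a)^q}{q}.
$$

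The key algebraic observation is that the two $a$-carrying factors combine cleanly: since $(-1)^{k+q}=(-1)^{k-q}$, we have $(-a)^k(-1/a)^q=(-1)^{k+q}a^{k-q}=(-a)^{k-q}$. Hence every summand becomes $\dfrac{(-a)^{k-q}}{kjq}$, so that the entire $a$-dependence is carried by the single power $(-a)^{k-q}$. I would then reindex by $p:=k-q$, which runs over $0,1,\dots,n-1$. For fixed $p$ the constraints $1\le q\le j\le k\le n$ together with $k=q+p$ become $1\le q\le n-p$ and $q\le j\le q+p$, giving
$$
S=\sum_{p=0}^{n-1}(-a)^p\sum_{q=1}^{n-p}\frac{1}{q(q+p)}\sum_{j=q}^{q+p}\frac1j.
$$

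To finish, I would evaluate the innermost sum as $\sum_{j=q}^{q+p}1/j=H_{q+p}-H_{q-1}$ and use $H_{q-1}=H_q-1/q$ to split it as $(H_{q+p}-H_q)+1/q$. Distributing the factor $1/\big(q(q+p)\big)$ over these two pieces produces exactly $\dfrac{H_{q+p}-H_q}{q(q+p)}$ and $\dfrac{1}{q^2(q+p)}$; renaming $q\to k$ then yields the claimed right-hand side.

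The only real obstacle is bookkeeping: getting the sign–power identity $(-a)^k(-1/a)^q=(-a)^{k-q}$ right, and correctly translating the original region $1\le q\le j\le k\le n$ into the ranges $1\le q\le n-p$, $q\le j\le q+p$ after setting $p=k-q$. Everything after that is the elementary harmonic-number step $H_{q+p}-H_{q-1}=(H_{q+p}-H_q)+1/q$. One could instead invoke Lemma~1.1 on the outermost sum, but that introduces a shift index distinct from the exponent of $a$ and forces an extra regrouping, so the direct expansion above is the shorter route.
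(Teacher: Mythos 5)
Your proof is correct, and it takes a genuinely different route from the paper. You expand the left-hand side as a single triple sum over the region $1\le q\le j\le k\le n$, use the sign identity $(-a)^k(-1/a)^q=(-a)^{k-q}$, and collect terms by the exponent $p=k-q$; the constraint translation ($1\le q\le n-p$, $q\le j\le q+p$) and the telescoping step $\sum_{j=q}^{q+p}1/j=H_{q+p}-H_q+1/q$ are all handled correctly, and I verified the result agrees for small $n$. The paper instead proceeds in two stages: it first applies the Hassani--Rahimpour double-sum identity (Lemma 1.3) with $a_{jk}=\frac{(-1/a)^j}{jk}$ to split the inner double sum into three pieces (involving $H_k$, $H_j$, and $1/j^2$), and then applies the rearrangement formula (1.12) of Lemma 1.1 three separate times to each resulting double sum before recombining. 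Your direct reindexing is shorter and more self-contained, requiring nothing beyond elementary manipulation of the summation region; the paper's route is longer but showcases and reuses the auxiliary lemmas it has already set up, producing the intermediate identities (1.17)--(1.19) along the way. One small point worth noting in a final write-up: both arguments implicitly assume $a\ne 0$ (as does the statement itself, via $(-1/a)^p$), and since both sides are polynomials in $a$ (cf. Remark 1.5) the identity extends to all of $\mathbb{C}$ once established for $a\ne 0$.
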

\begin{proof}
From Lemma 1.3 with $a_{jk}=\frac{(-1/a)^j}{jk}$ we get
\begin{align}\label{e:15}
\sum_{j=1}^{k}\frac{1}{j}\sum_{p=1}^{j}\frac{(-1/a)^p}{p}&=\sum_{j=1}^{k}\frac{H_k(-1/a)^j}{j}-\sum_{j=1}^{k}\frac{H_j(-1/a)^j}{j}\notag\\
&+\sum_{j=1}^k\frac{(-1/a)^j}{j^2}.
\end{align}
Multiplying both sides of (\ref{e:15}) by $\frac{(-a)^k}{k}$ and then summing both sides of the resulting equation from $k=1$ to $k=n$, it follows that
\begin{align}\label{e:16}
&\sum_{k=1}^{n}\frac{(-a)^k}{k}\sum_{j=1}^{k}\frac{1}{j}\sum_{p=1}^{j}\frac{(-1/a)^p}{p}=\sum_{k=1}^{n}\frac{H_k(-a)^k}{k}\sum_{j=1}^{k}\frac{(-1/a)^j}{j}\notag\\
&-\sum_{k=1}^{n}\frac{(-a)^k}{k}\sum_{j=1}^{k}\frac{H_j(-1/a)^j}{j}+\sum_{k=1}^{n}\frac{(-a)^k}{k}\sum_{j=1}^k\frac{(-1/a)^j}{j^2}.
\end{align}
Choosing the sequences $a_n$ and $b_n$ in (\ref{e:12}) appropriately  we obtain the following identities:
\begin{equation}\label{e:17}
\sum_{k=1}^{n}\frac{H_k(-a)^k}{k}\sum_{j=1}^{k}\frac{(-1/a)^j}{j}=\sum_{p=0}^{n-1}\bigg(\sum_{k=1}^{n-p}\frac{H_{p+k}}{k(p+k)}\bigg)(-a)^p,
\end{equation}
\begin{equation}\label{e:18}
\sum_{k=1}^{n}\frac{(-a)^k}{k}\sum_{j=1}^{k}\frac{H_j(-1/a)^j}{j}=\sum_{p=0}^{n-1}\bigg(\sum_{k=1}^{n-p}\frac{H_{k}}{k(p+k)}\bigg)(-a)^p,
\end{equation}
and
\begin{equation}\label{e:19}
\sum_{k=1}^{n}\frac{(-a)^k}{k}\sum_{j=1}^k\frac{(-1/a)^j}{j^2}=\sum_{p=0}^{n-1}\bigg(\sum_{k=1}^{n-p}\frac{1}{k^2(p+k)}\bigg)(-a)^p,
\end{equation}
Substituting (\ref{e:17}), (\ref{e:18}) and (\ref{e:19}) in (\ref{e:16}) we get the desired conclusion.
\end{proof}
\begin{rem}
We want to remark that the right hand side of (\ref{e:14}) is a polynomial of degree $n-1$ in $a$.
\end{rem}
\begin{lem}Let $a\in\mathbb{R}\backslash (-2,0)$, and $s$ be a positive real number. Then the following equality holds:
\begin{align*}
&\int_{0}^{1}\frac{1-x^s}{1-x}\log\left(1-\frac{1-x}{1+a}\right)dx=\sum_{k=1}^{\infty}\frac{1}{k^2(a+1)^k\binom{s+k}{k}}-Li_2\left(\frac{1}{a+1}\right).
\end{align*}
\end{lem}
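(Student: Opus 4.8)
The plan is to substitute $b=\frac{1}{a+1}$, so that the hypothesis $a\in\Real\setminus(-2,0)$ becomes precisely $\abs{b}\le 1$, and to expand the logarithm as a power series in $1-x$. Writing $\log\left(1-\frac{1-x}{1+a}\right)=\log\bigl(1-b(1-x)\bigr)$ and using the Maclaurin series $\log(1-u)=-\sum_{k=1}^{\infty}u^k/k$ with $u=b(1-x)$, I would obtain
\begin{equation*}
\int_{0}^{1}\frac{1-x^s}{1-x}\log\left(1-\frac{1-x}{1+a}\right)dx=-\sum_{k=1}^{\infty}\frac{b^k}{k}\int_{0}^{1}(1-x^s)(1-x)^{k-1}\,dx,
\end{equation*}
after cancelling one factor of $1-x$ between the numerator and denominator.

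The second step is the term-by-term evaluation of the inner integral. Splitting it and recognizing a beta integral gives $\int_{0}^{1}(1-x^s)(1-x)^{k-1}\,dx=\frac{1}{k}-B(s+1,k)$. By the beta--gamma relation (\ref{e:5}) and the definition (\ref{e:6}) of the generalized binomial coefficient one checks that $B(s+1,k)=\frac{\Gamma(s+1)\Gamma(k)}{\Gamma(s+k+1)}=\frac{1}{k\binom{s+k}{k}}$, so the inner integral equals $\frac{1}{k}-\frac{1}{k\binom{s+k}{k}}$. Substituting this back and separating the two resulting sums yields
\begin{equation*}
-\sum_{k=1}^{\infty}\frac{b^k}{k^2}+\sum_{k=1}^{\infty}\frac{b^k}{k^2\binom{s+k}{k}},
\end{equation*}
and since $\sum_{k=1}^{\infty}b^k/k^2=Li_2(b)$ and $b^k=(a+1)^{-k}$, this is exactly the claimed right-hand side.

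The main obstacle is justifying the interchange of summation and integration. For $\abs{b}<1$, that is $a>0$ or $a<-2$, the bound $\abs{b(1-x)}\le\abs{b}<1$ on $[0,1]$ gives uniform convergence of the logarithmic series, so term-by-term integration is immediate. The delicate case is the boundary $\abs{b}=1$, i.e. $a=0$ or $a=-2$, where the series for $\log\bigl(1-b(1-x)\bigr)$ fails to converge uniformly near $x=0$. Here I would argue by continuity in $a$: the right-hand side is continuous on $\abs{b}\le 1$ since $\sum b^k/(k^2\binom{s+k}{k})$ is dominated by $\sum 1/k^2$ and $Li_2$ is continuous on $[-1,1]$, while the left-hand side is continuous by dominated convergence, using that $\frac{1-x^s}{1-x}$ is bounded on $[0,1]$ and that $\bigl|\log(1-b(1-x))\bigr|$ admits an integrable majorant of the form $C(1+\abs{\log x})$ uniformly for $a$ near the endpoint (indeed $1-b(1-x)=\frac{a+x}{a+1}$, which stays bounded below by $x/2$ as $a\to 0^+$ and bounded away from $0$ as $a\to -2^-$). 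Passing to the limit $a\to 0^+$ or $a\to -2^-$ then extends the identity, already established on the open set, to the full closed set $\Real\setminus(-2,0)$.
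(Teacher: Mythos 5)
Your proposal is correct and follows essentially the same route as the paper's own proof: expand the logarithm in powers of $1-x$, interchange sum and integral (justified by the Weierstrass $M$-test when $|a+1|>1$), evaluate the resulting integrals as beta functions via (\ref{e:5}) and (\ref{e:6}), and finally extend the identity to the endpoints $a=0$ and $a=-2$ by a limiting argument. The only divergence is at the boundary step, where the paper appeals to Abel's limit test for the series side, while you additionally make explicit the continuity of the integral side via dominated convergence with the majorant $C(1+|\log x|)$ --- a detail the paper leaves implicit.
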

\begin{proof} Expanding the logarithmic function to its power series, we get
\begin{align}\label{uniform}
\int_{0}^{1}\frac{1-x^s}{1-x}\log\left(1-\frac{1-x}{1+a}\right)dx&=-\int_{0}^{1}(1-x^s)\sum_{k=1}^{\infty}\frac{(1-x)^{k-1}}{k(a+1)^k}dx\\
&=-\int_{0}^{1}(1-(1-x)x^s)\sum_{k=1}^{\infty}\frac{x^{k-1}}{k(a+1)^k}dx.
\end{align}
Let $x\in [0,1]$, and $a\in\mathbb{R}\backslash [-2,0]$.  Then we, clearly, have
$$
\left|\frac{(1-(1-x)^s)x^{k-1}}{k(a+1)^k}\right|\leq \frac{1}{k|a+1|^k},
$$
and $\sum_{k=1}^{\infty}\frac{1}{k[a+1]^k}<\infty$, so the series $\sum_{k=1}^{\infty}\frac{(1-(1-x)^s)x^{k-1}}{k(a+1)^k}$ is uniformly convergent on [0,1] by Weierstrass $M$-test. Therefore, we can reverse the order of integration and summation in (\ref{uniform}) (see \cite[pp.192--193]{Ter}), and
\begin{align*}
&\int_{0}^{1}\frac{1-x^s}{1-x}\log\left(1-\frac{1-x}{1+a}\right)dx\\
&=-\sum_{k=1}^{\infty}\frac{1}{k(a+1)^k}\left[\int_{0}^{1}(1-x)^{k-1}dx-\int_{0}^{1}x^s(1-x)^{k-1}dx\right].
\end{align*}
Since  we have by (\ref{e:5}) and (\ref{e:6})
\begin{equation}\label{e:20}
\int_{0}^{1}(1-x)^{k-1}dx=\frac{1}{k} \quad \mbox{and} \quad \int_{0}^{1}x^s(1-x)^{k-1}dx=\frac{1}{k\binom{s+k}{k}},
\end{equation}
we find for $a\in\mathbb{R}\backslash [-2,0]$
\begin{align*}
\int_{0}^{1}\frac{1-x^s}{1-x}\log\left(1-\frac{1-x}{1+a}\right)dx=\sum_{k=1}^{\infty}\frac{1}{k^2(a+1)^k\binom{s+k}{k}}-Li_2\left(\frac{1}{a+1}\right).
\end{align*}

But this series is also convergent at $a=-2$ and $a=0$, so, Lemma 1.6 is valid for $a\in\mathbb{R}\backslash (-2,0)$ by Abel's limit test. This completes the proof.
\end{proof}
\begin{lem} For k=0,1,2,...,  and $a\in\mathbb{R}$ we have
\begin{align*}
\int_{0}^{1}x^k\log (x+a)dx&=\frac{\log(a+1)}{k+1}-\frac{(-a)^{k+1}}{k+1}\sum_{j=1}^{k+1}\frac{(-1/a)^j}{j}\\
&-\frac{(-a)^{k+1}}{k+1}\log(1+1/a).
\end{align*}
\end{lem}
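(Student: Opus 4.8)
The plan is to evaluate the integral by a single integration by parts followed by an elementary polynomial-division expansion of the resulting rational integrand. Setting $u=\log(x+a)$ and $dv=x^k\,dx$, so that $du=\frac{dx}{x+a}$ and $v=\frac{x^{k+1}}{k+1}$, integration by parts gives
$$
\int_0^1 x^k\log(x+a)\,dx=\left[\frac{x^{k+1}}{k+1}\log(x+a)\right]_0^1-\frac{1}{k+1}\int_0^1\frac{x^{k+1}}{x+a}\,dx.
$$
Because $k\ge 0$, the lower endpoint of the boundary term vanishes and the upper endpoint contributes $\frac{\log(a+1)}{k+1}$, which is precisely the first term on the right-hand side of the claimed identity. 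It therefore remains only to evaluate the rational integral $\int_0^1\frac{x^{k+1}}{x+a}\,dx$.

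For this I would use the exact division identity
$$
\frac{x^{k+1}}{x+a}=\sum_{j=0}^{k}(-a)^j x^{k-j}+\frac{(-a)^{k+1}}{x+a},
$$
which follows at once from the finite factorization $x^{k+1}-(-a)^{k+1}=(x+a)\sum_{j=0}^{k}(-a)^j x^{k-j}$. Integrating term by term over $[0,1]$, using $\int_0^1 x^{k-j}\,dx=\frac{1}{k-j+1}$ and $\int_0^1\frac{dx}{x+a}=\log(a+1)-\log a=\log(1+1/a)$, yields
$$
\int_0^1\frac{x^{k+1}}{x+a}\,dx=\sum_{j=0}^{k}\frac{(-a)^j}{k-j+1}+(-a)^{k+1}\log\!\left(1+\tfrac1a\right).
$$

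The last bookkeeping step is to recast the finite sum in the form appearing in the statement. Substituting $m=k+1-j$ and using $(-a)^{-m}=(-1/a)^m$ turns $\sum_{j=0}^{k}\frac{(-a)^j}{k-j+1}$ into $(-a)^{k+1}\sum_{m=1}^{k+1}\frac{(-1/a)^m}{m}$; inserting this together with the logarithmic term back into the integration-by-parts formula and multiplying through by $-\frac{1}{k+1}$ reproduces the two remaining terms of the asserted identity. The computation is entirely elementary; the only point requiring a little care is the reindexing, where one must track the factor $(-a)^{k+1}$ correctly so that the exponent $(-1/a)^m$ emerges with the right sign. I would also note in passing that the formula is to be read on the range of $a$ (essentially $a>0$, or more generally via the principal branch) for which $\log(x+a)$ on $[0,1]$, $\log(a+1)$, and $\log(1+1/a)$ are all defined; the identity is a formal consequence of the steps above and holds wherever these terms make sense.
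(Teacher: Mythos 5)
Your proof is correct and takes essentially the same route as the paper's: integration by parts to reduce the problem to $\int_0^1 \frac{x^{k+1}}{x+a}\,dx$, followed by an exact finite expansion of that rational integrand --- your division identity $x^{k+1}-(-a)^{k+1}=(x+a)\sum_{j=0}^{k}(-a)^j x^{k-j}$ is the very same algebraic fact the paper writes as the finite geometric sum $\frac{1-(-x/a)^{k+1}}{1-(-x/a)}=\sum_{j=0}^{k}(-x/a)^j$, and the reindexing producing $(-a)^{k+1}\sum_{m=1}^{k+1}\frac{(-1/a)^m}{m}$ matches as well. Your closing caveat on the admissible range of $a$ (so that $\log(x+a)$, $\log(a+1)$ and $\log(1+1/a)$ all make sense) is a sensible precision that the paper's blanket hypothesis $a\in\mathbb{R}$ glosses over.
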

\begin{proof}
Integrating by parts yields
\begin{equation}\label{e:21}
\int_{0}^{1}x^k\log(x+a)dx=\frac{\log(a+1)}{k+1}-\frac{1}{k+1}\int_{0}^{1}\frac{x^{k+1}}{x+a}dx.
\end{equation}
On the other hand, we have
\begin{align*}
&\int_{0}^{1}\frac{x^{k+1}}{x+a}dx=(-1)^{k}a^k\int_{0}^{1}\frac{1-(-x/a)^{k+1}-1}{1-(-x/a)}dx\nonumber\\
&=(-a)^k\int_{0}^{1}\frac{1-(-x/a)^{k+1}}{1-(-x/a)}dx+(-a)^{k+1}\int_{0}^{1}\frac{dx}{x+a}\nonumber\\
&=(-a)^{k}\int_{0}^{1}\sum_{j=0}^{k}(-x/a)^jdx+(-a)^{k+1}\log\left(1+\frac{1}{a}\right).
\end{align*}
Inverting the order of summation and integration, we obtain, after a short computation,
\begin{equation*}
\int_{0}^{1}\frac{x^{k+1}}{x+a}dx=(-a)^{k+1}\sum_{j=1}^{k+1}\frac{(-a)^j}{j}+(-a)^{k+1}\log\left(1+\frac{1}{a}\right).
\end{equation*}
Substituting this into (\ref{e:21}), we complete the proof.
\end{proof}
\begin{lem} For $|x|\leq1$ and $k\in\mathbb{N}$ we have
\begin{align*}
&\sum_{j=1}^{\infty}\frac{H_jx^j}{(j+1)(j+k+1)}=\frac{1}{2xk}\left(1-\frac{1}{x^k}\right)\log^2(1-x)\nonumber\\
&+\frac{\log(1-x)}{kx^{k+1}}\left(H_k-\sum_{j=1}^{k}\frac{x^j}{j}\right)+\frac{1}{kx^{k+1}}\sum_{j=1}^{k}\frac{1}{j}\sum_{p=1}^{j}\frac{x^p}{p}.
\end{align*}
\end{lem}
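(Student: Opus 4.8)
The plan is to reduce this double-indexed series to two single power series by partial fractions, evaluate each one through the harmonic generating function, and then collect like terms. First I would write
\[
\frac{1}{(j+1)(j+k+1)}=\frac{1}{k}\left(\frac{1}{j+1}-\frac{1}{j+k+1}\right),
\]
so that the target sum equals $\frac{1}{k}\big(S_0(x)-S_k(x)\big)$, where $S_0(x)=\sum_{j=1}^{\infty}\frac{H_jx^j}{j+1}$ and $S_k(x)=\sum_{j=1}^{\infty}\frac{H_jx^j}{j+k+1}$. The engine for both is the classical generating function $\sum_{j=1}^{\infty}H_jx^j=-\frac{\log(1-x)}{1-x}$, valid for $|x|<1$.

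For $S_0$ I would observe that $\sum_{j=1}^{\infty}\frac{H_jx^{j+1}}{j+1}$ has derivative $-\frac{\log(1-x)}{1-x}=\frac{d}{dx}\big[\tfrac12\log^2(1-x)\big]$, whence $S_0(x)=\frac{1}{2x}\log^2(1-x)$ after dividing by $x$. This already supplies the $\log^2(1-x)$ pieces of the answer. The real work is $S_k$: writing $\sum_{j=1}^{\infty}\frac{H_jx^{j+k+1}}{j+k+1}=-\int_{0}^{x}t^k\frac{\log(1-t)}{1-t}\,dt$ and using the elementary identity $\frac{t^k}{1-t}=\frac{1}{1-t}-\sum_{i=0}^{k-1}t^i$, the integral splits into the same $\tfrac12\log^2(1-x)$ term plus $\sum_{i=0}^{k-1}\int_{0}^{x}t^i\log(1-t)\,dt$.

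Each of the latter integrals I would evaluate by parts, using $\int_{0}^{x}\frac{t^{i+1}}{1-t}\,dt=-\sum_{m=1}^{i+1}\frac{x^m}{m}-\log(1-x)$, to obtain $\frac{(x^{i+1}-1)\log(1-x)}{i+1}-\frac{1}{i+1}\sum_{m=1}^{i+1}\frac{x^m}{m}$. Summing over $i$ and reindexing with $j=i+1$ converts the $x^{i+1}$ terms into $\sum_{j=1}^{k}\frac{x^j}{j}$ and the $-1$ terms into $H_k$; dividing by $x^{k+1}$ then expresses $S_k$ in terms of $\log^2(1-x)$, of $\log(1-x)\big(H_k-\sum_{j=1}^{k}x^j/j\big)$, and of the iterated sum $\sum_{j=1}^{k}\frac1j\sum_{p=1}^{j}\frac{x^p}{p}$.

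Finally I would substitute into $\frac1k\big(S_0-S_k\big)$ and collect by type: the two $\log^2$ contributions combine into $\frac{1}{2xk}\big(1-x^{-k}\big)\log^2(1-x)$, the linear-log terms assemble into $\frac{\log(1-x)}{kx^{k+1}}\big(H_k-\sum_{j=1}^{k}x^j/j\big)$, and the iterated sum yields the last term, which is exactly the claimed identity. All steps hold for $0<|x|<1$; the boundary case $|x|=1$ then follows by Abel's limit theorem, since both sides extend continuously there (the apparent singularities at $x=0$ and $x=1$ cancel). I expect the main obstacle to be purely the bookkeeping in evaluating and reassembling the finite sums from $S_k$, and in particular tracking the $-1$ in $x^{i+1}-1$ that produces the $H_k$ term, rather than any conceptual difficulty.
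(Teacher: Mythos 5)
Your proof is correct, and it reaches the paper's key intermediate formula by a genuinely different route. Both you and the paper begin identically: the partial fraction $\frac{1}{(j+1)(j+k+1)}=\frac{1}{k}\bigl(\frac{1}{j+1}-\frac{1}{j+k+1}\bigr)$ and the evaluation $\sum_{j\geq1}\frac{H_jx^j}{j+1}=\frac{1}{2x}\log^2(1-x)$. The divergence is in how $S_k=\sum_{j\geq1}\frac{H_jx^j}{j+k+1}$ is computed. The paper stays entirely inside series manipulations: it applies its summation lemma (equation (1.13), with $b_j=1/j$, $a_j=x^j/(j+k+1)$) to absorb the harmonic weight into a double series, performs a second partial fraction, shifts the index $p+k+1\mapsto p$, and then splits into an infinite double sum (evaluated as $\tfrac12\log^2(1-x)$) plus a finite-by-infinite double sum, each step requiring an absolute-convergence justification to reorder summation. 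You instead write $x^{k+1}S_k=-\int_0^x t^k\frac{\log(1-t)}{1-t}\,dt$, use $\frac{t^k}{1-t}=\frac{1}{1-t}-\sum_{i=0}^{k-1}t^i$, and finish with integration by parts on each $\int_0^x t^i\log(1-t)\,dt$; the only analytic justification needed is term-by-term integration of a power series inside its radius of convergence, which is standard. Your route is shorter and more elementary, trading the paper's double-series bookkeeping for a one-line polynomial division; the paper's route has the virtue of reusing the same summation lemma (Lemma 1.1) that drives most of its other proofs, so it is more uniform with the rest of the article. Both proofs conclude with the same Abel-limit argument to pass from $|x|<1$ to $|x|\leq1$, and your observation that the coefficients of $\log^2(1-x)$ and $\log(1-x)$ vanish as $x\to1$ is exactly what makes that step legitimate at $x=1$.
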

\begin{proof} By partial fraction decomposition we get
\begin{align}\label{e:22}
&\sum_{j=1}^{\infty}\frac{H_jx^j}{(j+1)(j+k+1)}=\frac{1}{k}\sum_{j=1}^{\infty}H_j\left(\frac{1}{j+1}-\frac{1}{j+k+1}\right)x^j\nonumber\\
&=\frac{1}{k}\sum_{j=1}^{\infty}\frac{H_j x^j}{j+1}-\frac{1}{k}\sum_{j=1}^{\infty}\frac{H_j x^j}{j+k+1}.
\end{align}
Employing (\ref{e:13}) with $b_j=1/j$ and $a_j=\frac{x^j}{j+k+1}$, we get
\begin{align*}
&\sum_{j=1}^{\infty}\frac{H_jx^j}{j+k+1}=\sum_{p=0}^{\infty}\sum_{j=1}^{\infty}\frac{x^{p+j}}{j(p+j+k+1)}.
\end{align*}
From \cite{10} we know that
\begin{equation}\label{e:23}
\sum_{j=1}^{\infty}\frac{H_jx^j}{j}=\frac{1}{2}\log^2(1-x)+Li_2(x).
\end{equation}
By partial fraction decomposition, we obtain
\begin{align*}
&\sum_{j=1}^{\infty}\frac{H_jx^j}{j+k+1}=\sum_{p=0}^{\infty}\sum_{j=1}^{\infty}\frac{x^{p+j}}{p+k+1}\left(\frac{1}{j}-\frac{1}{p+j+k+1}\right)\nonumber\\
&=\sum_{p=0}^{\infty}\frac{x^p}{p+k+1}\left(\sum_{j=1}^{\infty}\frac{x^j}{j}-\sum_{j=1}^{\infty}\frac{x^j}{p+j+k+1}\right)\nonumber\\
&=-\log(1-x)\sum_{p=0}^{\infty}\frac{x^p}{p+k+1}-\sum_{p=0}^{\infty}\frac{x^p}{p+k+1}\sum_{j=1}^{\infty}\frac{x^j}{p+j+k+1}.
\end{align*}
Setting $p+k+1=k^\prime$ and then deleting the prime one gets
\begin{align}\label{A}
&\sum_{j=1}^{\infty}\frac{H_jx^j}{j+k+1}=-\frac{\log(1-x)}{x^{k+1}}\sum_{p=k+1}^{\infty}\frac{x^p}{p}-\frac{1}{x^{k+1}}\sum_{p=k+1}^{\infty}\frac{x^p}{p}\sum_{j=1}^{\infty}\frac{x^j}{p+j}\notag\\
&=-\frac{\log(1-x)}{x^{k+1}}\left(-\log(1-x)-\sum_{p=1}^{k}\frac{x^p}{p}\right)\notag\\
&-\frac{1}{x^{k+1}}\sum_{p=1}^{\infty}\sum_{j=1}^{\infty}\frac{x^{p+j}}{p(p+j)}+\frac{1}{x^{k+1}}\sum_{p=1}^{k}\sum_{j=1}^{\infty}\frac{x^{p+j}}{p(p+j)}.
\end{align}
Since $|x|<1$, we have
\begin{align*}
\sum_{p=1}^{\infty}\sum_{j=1}^{\infty}\left|\frac{x^{p+j}}{p(p+j)}\right|<\sum_{p=1}^{\infty}\sum_{j=1}^{\infty}\left|\frac{x^{p+j}}{p(j+1)}\right|=\sum_{p=1}^{\infty}\frac{|x|^p}{p}\sum_{j=1}^{\infty}\frac{|x|^j}{j+1}<\infty.
\end{align*}
We therefore can reverse the order of sums in the first sum in the last line of (\ref{A}) and obtain by making use of (\ref{e:13})
\begin{align*}
\sum_{p=1}^{\infty}\sum_{j=1}^{\infty}\frac{x^{p+j}}{p(p+j)}&=\sum_{j=1}^{\infty}\sum_{p=1}^{\infty}\frac{x^{p+j}}{p(p+j)}=\sum_{j=0}^{\infty}\sum_{p=1}^{\infty}\frac{x^{p+j+1}}{p(p+j+1)}\\
&=\sum_{k=1}^{\infty}\frac{x^{k+1}}{k+1}\sum_{j=1}^{k}\frac{1}{j}=\sum_{k=1}^{\infty}\frac{x^{k+1}H_k}{k+1}.
\end{align*}
Applying
\begin{equation}\label{e:25}
\sum_{k=1}^{\infty}\frac{H_kx^k}{k+1}=\frac{1}{2x}\log^2(1-x),
\end{equation}
which can easily be derived from (\ref{e:23}), we get
\begin{equation}\label{B}
\sum_{p=1}^{\infty}\sum_{j=1}^{\infty}\frac{x^{p+j}}{p(p+j)}=\frac{1}{2}\log^2(1-x).
\end{equation}
On the other hand, we have
\begin{align}\label{C}
\sum_{p=1}^{k}\sum_{j=1}^{\infty}\frac{x^{p+j}}{p(p+j)}=\sum_{p=1}^{k}\frac{1}{p}\sum_{j=p+1}^{\infty}\frac{x^{j}}{j}=-H_k\log(1-x)-\sum_{p=1}^{k}\frac{1}{p}\sum_{j=1}^{p}\frac{x^j}{j}.
\end{align}
Replacing (\ref{B}) and (\ref{C}) in (\ref{A}) we find
\begin{align}\label{D}
\sum_{j=1}^{\infty}\frac{H_jx^j}{j+k+1}&=\frac{\log^2(1-x)}{2x^{k+1}}+\frac{\log(1-x)}{x^{k+1}}\sum_{p=1}^{k}\frac{x^p}{p}\notag\\
&-\frac{H_k\log(1-x)}{x^{k+1}}-\frac{1}{x^{k+1}}\sum_{p=1}^{k}\frac{1}{p}\sum_{j=1}^{p-1}\frac{x^j}{j}.
\end{align}
Combining (\ref{e:22}), (\ref{e:25}) and (\ref{D}), we see that Lemma 1.8 is valid for $|x|<1$. The radius of convergence of the power series $\sum_{j=1}^{\infty}\frac{H_jx^j}{(j+1)(j+k+1)}$ is 1. Clearly, this power series is also convergent at $x=-1$ and $x=1$. So, by Abel's limit theorem,  Lemma 1.8 is also valid for $|x|\leq 1$. Lemma 1.8 provides a generalization of the first part of Corollary 1 in \cite{32} with $p=1$.
\end{proof}
\section{main results}
We prove the following theorems.
\begin{thm} For all $a\in\mathbb{R}\backslash (-2,0]$,and $s$ be a positive real number,  we have
\begin{align}\label{e:26}
\int_{0}^{1}\frac{1-x^s}{1-x}\log(x+a)dx&=(\gamma+\psi(s+1))\log(a+1)-Li_2\left(\frac{1}{a+1}\right)\nonumber\\
&+\sum\limits_{k=1}^\infty\frac{1}{k^2(a+1)^k\binom{s+k}{k}}.
\end{align}
\end{thm}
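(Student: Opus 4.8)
\emph{Proof plan.} The plan is to obtain this identity directly from Lemma 1.6 by peeling off a logarithmic factor. The key observation is the elementary simplification
\[
1-\frac{1-x}{1+a}=\frac{(1+a)-(1-x)}{1+a}=\frac{x+a}{a+1},
\]
so the logarithm occurring in Lemma 1.6 factors as
\[
\log\left(1-\frac{1-x}{1+a}\right)=\log(x+a)-\log(a+1).
\]
First I would verify that this splitting is legitimate throughout $\Real\setminus(-2,0]$. For $a>0$ both $x+a$ and $a+1$ are positive on $[0,1]$, so the identity holds with ordinary real logarithms. For $a\le-2$ both $x+a$ and $a+1$ are negative, so under the principal branch each picks up an additive term $i\pi$; these cancel in the difference, while $\frac{x+a}{a+1}>0$, so the identity survives. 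This also clarifies the exclusion of $a=0$: there $\log(x+a)=\log x$ develops an (integrable) singularity at $x=0$, whereas for $a>0$ the integrand stays bounded.

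Next I would multiply the displayed relation by $\frac{1-x^s}{1-x}$ and integrate over $[0,1]$, obtaining
\[
\int_{0}^{1}\frac{1-x^s}{1-x}\log(x+a)\,dx
=\log(a+1)\int_{0}^{1}\frac{1-x^s}{1-x}\,dx
+\int_{0}^{1}\frac{1-x^s}{1-x}\log\left(1-\frac{1-x}{1+a}\right)dx.
\]
The final integral is evaluated by Lemma 1.6, contributing exactly $\sum_{k=1}^{\infty}\frac{1}{k^2(a+1)^k\binom{s+k}{k}}-Li_2\!\left(\frac{1}{a+1}\right)$.

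The remaining ingredient is the classical digamma integral
\[
\int_{0}^{1}\frac{1-x^s}{1-x}\,dx=\gamma+\psi(s+1),
\]
which I would justify by writing $\frac{1}{n+1}-\frac{1}{n+s+1}=\int_{0}^{1}x^n(1-x^s)\,dx$, summing over $n\ge0$ via $\sum_{n\ge0}x^n=\frac{1}{1-x}$, and comparing with $\psi(s+1)+\gamma=\sum_{n\ge0}\left(\frac{1}{n+1}-\frac{1}{n+s+1}\right)$. Substituting both evaluations into the split identity and rearranging produces the claimed formula.

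The point requiring most care is the branch bookkeeping for $a\le-2$ together with the boundary case $a=-2$, where $a+1=-1$ and $Li_2(1/(a+1))=Li_2(-1)$; exactly as in the proof of Lemma 1.6, this endpoint is reached through Abel's limit theorem, since $\sum_{k}\frac{1}{k^2(a+1)^k\binom{s+k}{k}}$ still converges there. Everything else is the routine recombination of three convergent integrals, so I expect no genuine obstacle beyond confirming that the imaginary parts of the two logarithms cancel on the negative portion of the domain.
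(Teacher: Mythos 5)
Your proposal is correct and follows essentially the same route as the paper's own proof: both rest on the factorization $x+a=(a+1)\left(1-\frac{1-x}{1+a}\right)$, splitting the logarithm so that the integral decomposes into $\log(a+1)\int_{0}^{1}\frac{1-x^s}{1-x}\,dx$ plus the integral evaluated by Lemma 1.6, with the classical identity $\int_{0}^{1}\frac{1-x^s}{1-x}\,dx=\gamma+\psi(s+1)$ finishing the job. The only difference is one of detail: you supply the branch bookkeeping for $a\le-2$ and a derivation of the digamma integral, both of which the paper leaves implicit.
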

\begin{proof} We have
\begin{align*}
&\int_{0}^{1}\frac{1-x^s}{1-x}\log(x+a)dx=\int_{0}^{1}\frac{1-x^s}{1-x}\log\left[(a+1)\left(1-\frac{1-x}{1+a}\right)\right]dx\\
&=\log(a+1)\int_{0}^{1}\frac{1-x^s}{1-x}dx+\int_{0}^{1}\frac{1-x^s}{1-x}\log\left(1-\frac{1-x}{1+a}\right)dx.\\
\end{align*}
Using Lemma 1.6 and  the following  well-known identity, the proof is completed.
\begin{equation*}
\int_{0}^{1}\frac{1-x^s}{1-x}dx=\gamma+\psi(s+1),
\end{equation*}
where $\gamma=0.57721...$ is the Euler-Mascheroni constant.
\end{proof}
If we set $s=n\in\mathbb{N}\cup\{0\}$ in (\ref{e:26}), we get the following corollary
\begin{cor}For all $n\in\mathbb{N}\cup\{0\}$ and $a\in\mathbb{R}\backslash (-2,0)$ the following identity holds:
\begin{align}\label{e:27}
\int_{0}^{1}\frac{1-x^n}{1-x}\log(x+a)dx&=H_n\log(a+1)-Li_2\bigg(\frac{1}{a+1}\bigg)\notag\\
&+\sum_{k=1}^{\infty}\frac{1}{k^2(a+1)^k\binom{n+k}{k}}.
\end{align}
\end{cor}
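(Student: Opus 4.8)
The plan is to read off this corollary directly from Theorem 2.1 by specializing the real parameter $s$ to a nonnegative integer $n$. First I would set $s=n$ in the identity (\ref{e:26}): the integrand becomes $\frac{1-x^n}{1-x}\log(x+a)$, the dilogarithm term $Li_2\bigl(\frac{1}{a+1}\bigr)$ is unchanged, and the binomial coefficient $\binom{s+k}{k}$ in the series turns into $\binom{n+k}{k}$. The only piece that needs rewriting is the coefficient $\gamma+\psi(s+1)$ of $\log(a+1)$, which becomes $\gamma+\psi(n+1)$. Here I invoke the relation $\gamma+\psi(n+1)=H_n$ recalled in the introduction; for $n=0$ it also holds, since $\psi(1)=-\gamma$ and $H_0=0$. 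Substituting this collapses the coefficient to $H_n$ and produces exactly (\ref{e:27}).

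The one point requiring care is the domain. Theorem 2.1 is stated for $a\in\mathbb{R}\backslash(-2,0]$, whereas the corollary asserts (\ref{e:27}) on the larger set $\mathbb{R}\backslash(-2,0)$; thus I must adjoin the single endpoint $a=0$. At $a=0$ the factor $\log(a+1)$ vanishes, $Li_2\bigl(\frac{1}{a+1}\bigr)$ reduces to $Li_2(1)=\zeta(2)$, and the series $\sum_{k=1}^{\infty}\frac{1}{k^2\binom{n+k}{k}}$ converges, its terms being dominated by $1/k^2$. To pass to this endpoint I would argue by continuity as $a\to 0^{+}$ along $(0,\infty)$: the left-hand integral is continuous there by dominated convergence, since for $a\in(0,1]$ and $x\in(0,1)$ one has the integrable bound $|\log(x+a)|\leq|\log x|+\log 2$ against the bounded factor $(1-x^n)/(1-x)\leq n$, while on the right the power series in the variable $1/(a+1)$ is continuous up to $1/(a+1)=1$ by Abel's limit theorem. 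Hence both sides of (\ref{e:27}) are continuous at $a=0$, and the identity established on $\mathbb{R}\backslash(-2,0]$ extends to all of $\mathbb{R}\backslash(-2,0)$.

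I expect no genuine obstacle beyond this endpoint verification. Once Theorem 2.1 is in hand, the corollary is merely its restatement under the digamma--harmonic-number dictionary $\gamma+\psi(n+1)=H_n$, and the only substantive step is the boundary extension to $a=0$ described above.
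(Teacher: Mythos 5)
Your proposal is correct and follows essentially the same route as the paper, whose entire proof consists of setting $s=n$ in (\ref{e:26}) and rewriting $\gamma+\psi(n+1)$ as $H_n$. Your additional continuity/Abel argument adjoining the endpoint $a=0$ is a welcome extra step: the paper silently enlarges the domain from $\mathbb{R}\backslash(-2,0]$ in Theorem 2.1 to $\mathbb{R}\backslash(-2,0)$ in the corollary without justification, which your dominated-convergence bound on the integral and Abel's limit theorem on the series supply.
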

\begin{thm} For  all $n\in\mathbb{N}$ and $a\in\mathbb{R}$ we have
\begin{align}\label{e:28}
\int_{0}^{1}\frac{1-x^n}{1-x}\log(x+a)dx&=H_n\log(a+1)-\sum_{k=1}^{n}\frac{(-a)^k}{k}\sum_{j=1}^{k}\frac{(-1/a)^j}{j}\nonumber\\
&-\log(1+1/a)\sum_{k=1}^{n}\frac{(-a)^k}{k}.
\end{align}
\end{thm}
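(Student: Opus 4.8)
The plan is to reduce this statement to a purely termwise application of Lemma 1.7, so that the result follows by elementary bookkeeping with no analytic subtlety. First I would write the rational factor as a finite geometric sum,
$$\frac{1-x^n}{1-x}=\sum_{k=0}^{n-1}x^k,$$
valid for $x\in[0,1)$ and extending by continuity to $x=1$. By linearity of the integral this gives
$$\int_{0}^{1}\frac{1-x^n}{1-x}\log(x+a)\,dx=\sum_{k=0}^{n-1}\int_{0}^{1}x^k\log(x+a)\,dx.$$
Since the sum is finite, there are no convergence issues to verify and the whole problem becomes algebraic.

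Next I would substitute the closed form supplied by Lemma 1.7 for each integral $\int_{0}^{1}x^k\log(x+a)\,dx$, namely
$$\int_{0}^{1}x^k\log(x+a)\,dx=\frac{\log(a+1)}{k+1}-\frac{(-a)^{k+1}}{k+1}\sum_{j=1}^{k+1}\frac{(-1/a)^j}{j}-\frac{(-a)^{k+1}}{k+1}\log(1+1/a).$$
This produces three groups of terms indexed by $k=0,\dots,n-1$. After reindexing with $m=k+1$, so that $m$ runs from $1$ to $n$, each summand carries denominator $m$ and the powers $(-a)^{k+1}$ become $(-a)^{m}$. Collecting the three groups separately yields
$$\log(a+1)\sum_{m=1}^{n}\frac{1}{m}-\sum_{m=1}^{n}\frac{(-a)^m}{m}\sum_{j=1}^{m}\frac{(-1/a)^j}{j}-\log(1+1/a)\sum_{m=1}^{n}\frac{(-a)^m}{m}.$$

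Finally I would recognize $\sum_{m=1}^{n}\frac{1}{m}=H_n$, which turns the first group into $H_n\log(a+1)$ and, after renaming the summation index $m$ back to $k$, gives exactly the right-hand side of (\ref{e:28}). There is essentially no hard step: the argument is a mechanical combination of the three pieces delivered by Lemma 1.7, and the only point requiring a little attention is the index shift $m=k+1$ together with the identification of the resulting $\sum 1/m$ as the harmonic number $H_n$. The hypotheses on $a$ (real, with $a\neq0$ and $a\neq-1$ so that the logarithms are defined) are inherited directly from Lemma 1.7, so no further restriction is needed.
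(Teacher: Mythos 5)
Your proof is correct and takes essentially the same route as the paper: the paper likewise expands $\frac{1-x^n}{1-x}=\sum_{k=0}^{n-1}x^k$, applies Lemma 1.7 termwise, and reindexes so that $\sum_{k=1}^{n}\frac{1}{k}=H_n$ produces the stated right-hand side. There is nothing to add beyond this.
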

\begin{proof} Clearly, we have by using Lemma 1.7
\begin{align*}
&\int_{0}^{1}\frac{1-x^n}{1-x}\log(x+a)dx=\sum_{k=0}^{n-1}\int_{0}^{1}x^k\log(x+a)dx\\
&=\log(a+1)\sum_{k=1}^{n}\frac{1}{k}-\sum_{k=1}^{n}\frac{(-a)^{k}}{k}\sum_{j=1}^{k}\frac{(-1/a)^j}{j}\\
&-\log(1+1/a)\sum_{k=1}^{n}\frac{(-a)^k}{k}.
\end{align*}
This completes the proof.
\end{proof}
Equating the right-hand sides of (\ref{e:27}) and (\ref{e:28}), and taking into account Lemma 1.2, we arrive at the following conclusion.
\begin{cor} For all $n\in\mathbb{N}$ and $a\in\mathbb{R}\backslash (-2,0)$ the following equality holds:
\begin{align}\label{e:29}
\sum\limits_{k=1}^\infty\frac{1}{k^2(a+1)^k\binom{n+k}{k}}&=Li_2\left(\frac{1}{a+1}\right)-\log(1+1/a)\sum_{k=1}^{n}\frac{(-a)^k}{k}\notag\\
&-H_n^{(2)}-\sum_{k=1}^{n}\frac{(H_k+H_{n-k}-H_n)(-a)^k}{k}.
\end{align}
\end{cor}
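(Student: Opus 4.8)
The plan is to observe that Corollary 1.10 and Theorem 1.11 are two evaluations of the \emph{same} integral
$$
\int_{0}^{1}\frac{1-x^n}{1-x}\log(x+a)\,dx,
$$
so the corollary follows by equating their right-hand sides rather than by any fresh computation. First I would place (\ref{e:27}) and (\ref{e:28}) alongside one another. Both carry the term $H_n\log(a+1)$, which cancels on subtraction, and solving the resulting equation for the target series gives
\begin{align*}
\sum_{k=1}^{\infty}\frac{1}{k^2(a+1)^k\binom{n+k}{k}}&=Li_2\left(\frac{1}{a+1}\right)-\log(1+1/a)\sum_{k=1}^{n}\frac{(-a)^k}{k}\\
&\quad-\sum_{k=1}^{n}\frac{(-a)^k}{k}\sum_{j=1}^{k}\frac{(-1/a)^j}{j}.
\end{align*}

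The second step is a single substitution. Lemma 1.2 records the closed form
$$
\sum_{k=1}^{n}\frac{(-a)^k}{k}\sum_{j=1}^{k}\frac{(-1/a)^j}{j}=H_n^{(2)}+\sum_{k=1}^{n}\frac{(H_k+H_{n-k}-H_n)(-a)^k}{k}
$$
for the double sum appearing above. Inserting this into the previous display and collecting terms reproduces (\ref{e:29}) exactly.

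The only genuinely delicate point is bookkeeping of the admissible range of $a$, not any analytic estimate. Corollary 1.10 requires $a\in\mathbb{R}\backslash(-2,0)$ so that the dilogarithm series representing the integral converges, Theorem 1.11 is valid for every $a\in\mathbb{R}$, and Lemma 1.2 is an algebraic identity for any $a\neq0$; the intersection is the stated set $a\in\mathbb{R}\backslash(-2,0)$. I would flag that the endpoint $a=0$ lies in this set yet renders several individual terms on the right singular (via $\log(1+1/a)$ and $(-1/a)^j$); there the right-hand side must be read as the limit $a\to0$, which one checks collapses to $\zeta(2)-H_n^{(2)}$, in agreement with the value of $\sum_{k\geq1}1/(k^2\binom{n+k}{k})$. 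Apart from this caveat, the corollary contains no further content beyond Theorem 1.11, Corollary 1.10, and Lemma 1.2.
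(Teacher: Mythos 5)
Your proposal is correct and is essentially the paper's own argument: the paper obtains (\ref{e:29}) precisely by equating the right-hand sides of (\ref{e:27}) and (\ref{e:28}), cancelling the common term $H_n\log(a+1)$, and substituting Lemma 1.2 for the double sum $\sum_{k=1}^{n}\frac{(-a)^k}{k}\sum_{j=1}^{k}\frac{(-1/a)^j}{j}$. Your extra caveat that at $a=0$ the right-hand side must be interpreted as a limit (the singular $\log(1+1/a)$ terms combine to give $\zeta(2)-H_n^{(2)}$) is a sound refinement that the paper leaves implicit.
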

\begin{cor}Let $a\in\mathbb{R}\backslash (-2,0)$ and $n\in\mathbb{N}$. Then we have
\begin{align}\label{e:30}
\sum _{k=1}^{\infty } \frac{1}{k^2(a+1)^k \binom{n+k}{k}}&=-H_n\log(1+1/a)-Li_2\bigg(\frac{1}{a+1}\bigg)-Li_2(-1/a)\notag\\
&-\frac{1}{2}\log^2(1+1/a)-\sum_{k=1}^{\infty}\frac{H_{n+k}(-1/a)^k}{k}.
\end{align}
\end{cor}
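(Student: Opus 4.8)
The plan is to reduce the problem to a second evaluation of the integral $\int_0^1\frac{1-x^n}{1-x}\log(x+a)\,dx$ and to insert the result into Corollary 2.2. Rearranging (\ref{e:27}) gives
\[
\sum_{k=1}^{\infty}\frac{1}{k^2(a+1)^k\binom{n+k}{k}}=\int_0^1\frac{1-x^n}{1-x}\log(x+a)\,dx-H_n\log(a+1)+Li_2\left(\frac{1}{a+1}\right),
\]
so everything hinges on computing this integral in a shape that exposes the series $\sum_{k\ge1}H_{n+k}(-1/a)^k/k$.

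First I would write $\log(x+a)=\log a+\log(1+x/a)$ and expand $\log(1+x/a)=-\sum_{m\ge1}\frac{(-1/a)^m}{m}x^m$, which converges on $[0,1]$ as soon as $|a|>1$. As in Lemma 1.6, the Weierstrass $M$-test justifies swapping summation and integration; then, using $\frac{1-x^n}{1-x}=\sum_{i=0}^{n-1}x^i$ together with the elementary moment $\int_0^1\frac{(1-x^n)x^m}{1-x}\,dx=H_{m+n}-H_m$, I obtain
\[
\int_0^1\frac{1-x^n}{1-x}\log(x+a)\,dx=H_n\log a-\sum_{m=1}^{\infty}\frac{(H_{m+n}-H_m)(-1/a)^m}{m}.
\]
Separating the $H_m$ part and summing it through the Euler series (\ref{e:23}) at $x=-1/a$, namely $\sum_{m\ge1}\frac{H_m(-1/a)^m}{m}=\frac12\log^2(1+1/a)+Li_2(-1/a)$, and simplifying $H_n\log a-H_n\log(a+1)=-H_n\log(1+1/a)$, the displayed identity reduces the target sum to
\[
-H_n\log(1+1/a)+Li_2\left(\frac{1}{a+1}\right)+Li_2(-1/a)+\frac12\log^2(1+1/a)-\sum_{k=1}^{\infty}\frac{H_{n+k}(-1/a)^k}{k}.
\]

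To arrive at (\ref{e:30}) exactly I would then use the dilogarithm functional equation (\ref{e:10}), equivalently Landen's relation $Li_2\left(\frac{1}{a+1}\right)=-Li_2(-1/a)-\frac12\log^2(1+1/a)$, to recast the three transcendental constants into the combination recorded in (\ref{e:30}). I expect the main obstacle to be analytic rather than algebraic: the logarithmic expansion and the series $\sum_k H_{n+k}(-1/a)^k/k$ converge only for $|a|\ge1$, so the identity should be established first for $a>1$ and then carried to $a\le-2$ and the boundary $a=1$ by Abel's limit theorem and analytic continuation, precisely as the endpoint cases are treated in Lemmas 1.6 and 1.8. One must also keep track of the principal branch of $\log a$ and $\log(x+a)$ when $a<0$, whose imaginary parts $i\pi H_n$ occur on both sides and cancel, leaving the real identity (\ref{e:30}). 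The remaining bookkeeping, collecting $Li_2\left(\frac{1}{a+1}\right)$, $Li_2(-1/a)$ and $\frac12\log^2(1+1/a)$ by the functional equation, is routine once the convergence has been secured.
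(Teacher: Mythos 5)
Your proof is correct, but it takes a genuinely different route from the paper's. The paper derives (\ref{e:30}) from Corollary 2.3, i.e.\ from (\ref{e:29}) (itself obtained by equating the dilogarithm evaluation (\ref{e:27}) with the finite-sum evaluation (\ref{e:28}) via Lemma 1.7): it uses Lemma 1.2 to reassemble the finite double sum $\sum_{k=1}^{n}\frac{(-a)^k}{k}\sum_{j=1}^{k}\frac{(-1/a)^j}{j}$, recognizes $\log(1+1/a)+\sum_{j=1}^{k}\frac{(-1/a)^j}{j}$ as the tail $-\sum_{j>k}\frac{(-1/a)^j}{j}$, and swaps the order of summation to produce the $H_j$ and $H_{n+j}$ series before applying (\ref{e:23}). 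You instead stay with (\ref{e:27}) and evaluate the integral a second way, expanding $\log(1+x/a)$ in powers of $x$ (rather than in powers of $(1-x)/(1+a)$ as in Lemma 1.6), so the moments $\int_0^1\frac{(1-x^n)x^m}{1-x}\,dx=H_{m+n}-H_m$ hand you the shifted harmonic series at once; this bypasses Lemmas 1.2 and 1.7 and Corollary 2.3 entirely and is shorter. The price is that your natural output carries $+Li_2\left(\frac{1}{a+1}\right)+Li_2(-1/a)+\frac{1}{2}\log^2(1+1/a)$ where (\ref{e:30}) has these three terms negated, so you must invoke Landen's identity, which says exactly that this combination vanishes; note that this is \emph{not} equation (\ref{e:10}) (Euler's reflection formula), as you assert, but a different, equally standard identity (see Lewin \cite{15}), so your ``equivalently'' is a mislabel, though the relation you actually write and use is correct. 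The discrepancy is harmless and in fact illuminating: a careful run of the paper's own manipulation also lands on your $+$ form, so the printed signs in (\ref{e:30}) are correct only because the Landen combination is identically zero---a point the paper never makes explicit. Finally, your convergence analysis (prove it for $a>1$ and $a\le-2$, then get $a=1$ by Abel's theorem) is more honest than the paper's stated range $\mathbb{R}\setminus(-2,0)$, which wrongly includes $0<a<1$ where $\sum_{k\ge1}H_{n+k}(-1/a)^k/k$ diverges; and for $a\le-2$ you do not even need analytic continuation, since there $|x/a|\le 1/2$ makes your expansion converge outright and only the constants $\log a$ and $\log(a+1)$ acquire imaginary parts $i\pi H_n$, which cancel exactly as you say.
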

\begin{proof} Using the power series $\log(1+t)=\sum_{k=1}^{\infty}\frac{(-1)^{k-1}t^k}{k}$ with $t=1/a$, we get after some simplifications
\begin{align*}
&\log(1+1/a)\sum_{k=1}^{n}\frac{(-a)^k}{k}-H_n^{(2)}-\sum_{k=1}^{n}\frac{(H_k+H_{n-k}-H_n)(-a)^k}{k}\\
&=\log(1+1/a)\sum_{k=1}^{n}\frac{(-a)^k}{k}+\sum_{k=1}^{n}\frac{(-a)^k}{k}\sum_{j=1}^{k}\frac{(-1/a)^j}{j}\\
&=\sum_{k=1}^{n}\frac{(-a)^k}{k}\bigg(\log(1+1/a)+\sum_{j=1}^{k}\frac{(-1/a)^j}{j}\bigg)\\
&=-\sum_{k=1}^{n}\frac{(-a)^k}{k}\sum_{j={k+1}}^{\infty}\frac{(-1/a)^j}{j}.\\
\end{align*}
If we set $j-k=j'$ and drop the prime on $j$, and interchanging the order of the two sums, this is
\begin{align}\label{e:31}
&\log(1+1/a)\sum_{k=1}^{n}\frac{(-a)^k}{k}-H_n^{(2)}-\sum_{k=1}^{n}\frac{(H_k+H_{n-k}-H_n)(-a)^k}{k}\notag\\
&=-\sum_{j=1}^{\infty}\frac{(-1/a)^j}{j}\sum_{k=1}^{n}\bigg(\frac{1}{k}-\frac{1}{k+j}\bigg)\notag\\
&=-\sum_{j=1}^{\infty}\frac{(-1/a)^j}{j}\sum_{k=1}^{n}(H_n-H_{n+j}+H_j)\notag\\
&=-H_n\sum_{j=1}^{\infty}\frac{(-1/a)^j}{j}-\sum_{j=1}^{\infty}\frac{(-1/a)^jH_j}{j}+\sum_{j=1}^{\infty}\frac{(-1/a)^jH_{n+j}}{j}.
\end{align}
Using (\ref{e:23}) with $x=-1/a$ and combining   (\ref{e:29}) and (\ref{e:31}), the assertion follows.

Equating the right-hand sides of (\ref{e:29}) and (\ref{e:30}), we get the following conclusion.
\end{proof}
\begin{cor}Let $a\in\mathbb{R}\backslash [-1,1)$ and $n\in\mathbb{N}$. Then we have
\begin{align}\label{e:32}
&\sum_{k=1}^{\infty}\frac{(-1/a)^kH_{n+k}}{k}=H_n^{(2)}+\log(1+1/a)\sum_{k=1}^{n}\frac{(-a)^k}{k}+\frac{1}{2}\log^2(1+1/a)\notag\\
&+Li_2(-1/a)-H_n\log(1+1/a)+\sum _{k=1}^n \frac{(-a)^k (H_{n-k}+H_k-H_n)}{k}.
\end{align}
\end{cor}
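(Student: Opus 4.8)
The plan is to obtain (\ref{e:32}) by comparing the two evaluations of the single series
\[
S:=\sum_{k=1}^{\infty}\frac{1}{k^2(a+1)^k\binom{n+k}{k}}
\]
recorded in Corollaries (\ref{e:29}) and (\ref{e:30}). First I would equate the right-hand sides of (\ref{e:29}) and (\ref{e:30}) and solve for the series $\sum_{k=1}^{\infty}\frac{(-1/a)^kH_{n+k}}{k}$, which appears (with coefficient $-1$) only in (\ref{e:30}). Moving the remaining terms across, the finite pieces $H_n^{(2)}$, $\log(1+1/a)\sum_{k=1}^{n}\frac{(-a)^k}{k}$, $\sum_{k=1}^{n}\frac{(H_k+H_{n-k}-H_n)(-a)^k}{k}$ and the term $-H_n\log(1+1/a)$ emerge with precisely the signs demanded by (\ref{e:32}); the whole difficulty is concentrated in the transcendental (dilogarithm) terms.

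Indeed, this step leaves the combination $-2\,Li_2\!\left(\tfrac{1}{a+1}\right)-Li_2(-1/a)-\tfrac12\log^2(1+1/a)$, whereas (\ref{e:32}) requires $Li_2(-1/a)+\tfrac12\log^2(1+1/a)$. The crucial ingredient, and the point where care is needed, is Landen's dilogarithm identity
\[
Li_2\!\left(\frac{1}{a+1}\right)+Li_2\!\left(-\frac1a\right)=-\frac12\log^2\!\left(1+\frac1a\right),
\]
obtained by setting $z=\frac{1}{a+1}$ (so $\frac{z}{z-1}=-\frac1a$ and $1-z=\frac{a}{a+1}$) in the standard relation $Li_2(z)+Li_2\!\left(\frac{z}{z-1}\right)=-\tfrac12\log^2(1-z)$ from \cite{15}. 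Substituting $Li_2\!\left(\tfrac{1}{a+1}\right)=-Li_2(-1/a)-\tfrac12\log^2(1+1/a)$ eliminates the two spurious copies of $Li_2\!\left(\tfrac{1}{a+1}\right)$ and leaves exactly $Li_2(-1/a)+\tfrac12\log^2(1+1/a)$, the transcendental block of (\ref{e:32}).

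The main obstacle is thus the careful sign-tracking in the dilogarithm reduction rather than any new computation. One further point deserves attention: Corollaries (\ref{e:29}) and (\ref{e:30}) are stated only for $a\in\mathbb{R}\backslash(-2,0)$, so the equating argument directly establishes (\ref{e:32}) on $(-\infty,-2]\cup[1,\infty)$. To reach the full range $a\in\mathbb{R}\backslash[-1,1)$ I would note that, on the connected interval $(-\infty,-1)$, the series on the left of (\ref{e:32}) converges and each term on the right is real-analytic in $a$ (reading $Li_2$ as its analytic continuation off $[1,\infty)$); the identity theorem then propagates the equality from $(-\infty,-2]$ to all of $(-\infty,-1)$, while the endpoint $a=1$ is recovered by an Abel-type limiting argument as in the proofs of Lemmas 1.6 and 1.8.
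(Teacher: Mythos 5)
Your proof is correct and is essentially the paper's own argument: the paper's entire proof of this corollary is the sentence ``Equating the right-hand sides of (\ref{e:29}) and (\ref{e:30}), we get the following conclusion.'' What you add---the explicit Landen step $Li_2\left(\frac{1}{a+1}\right)+Li_2\left(-\frac{1}{a}\right)=-\frac{1}{2}\log^2\left(1+\frac{1}{a}\right)$, which is genuinely needed since raw equating produces the block $-2Li_2\left(\frac{1}{a+1}\right)-Li_2(-1/a)-\frac{1}{2}\log^2(1+1/a)$ rather than the stated $Li_2(-1/a)+\frac{1}{2}\log^2(1+1/a)$, together with the continuation/Abel argument extending the identity from $(-\infty,-2]\cup[1,\infty)$ (where the two quoted corollaries and their series are actually available) to all of $\mathbb{R}\backslash[-1,1)$---simply supplies, correctly, the details that the paper's one-line proof leaves implicit.
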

\begin{thm} Let $s\in\mathbb{R}$, which is not a negative integer, and $a>0$. Then we have
\begin{align}\label{e:33}
&\int_{0}^{1}\frac{1-x^s}{1-x}\log^2(x+a)dx=(\gamma+\psi(s+1))\log^2(a+1)+2\zeta(3)\notag\\
&-\log(a+1)\log^2\bigg(\frac{a}{a+1}\bigg)-2\log(a+1)Li_2\bigg(\frac{1}{a+1}\bigg)\notag\\
&+2\log\bigg(\frac{a}{a+1}\bigg)Li_2\bigg(\frac{a}{a+1}\bigg)-2Li_3\bigg(\frac{a}{a+1}\bigg)\notag\notag\\
&+2\log(a+1)\sum_{k=1}^{\infty}\frac{1}{k^2(a+1)^k\binom{s+k}{k}}-\sum_{k=1}^{\infty}\frac{2H_{k-1}}{k^2(a+1)^k\binom{s+k}{k}},
\end{align}
and for $a\geq 2$, and $n\in\mathbb{N}$
\begin{align}\label{e:34}
&\int_{0}^{1}\frac{1-x^n}{1-x}\log^2(a-x)dx=H_n\log^2(a-1)+2\zeta(3)\notag\\
&-2H_n^{(2)}\log(a-1)-2\log(a-1)\sum_{k=1}^{n}\frac{(H_k+H_{n-k}-H_n)a^k}{k}\notag\\
&-\log(a-1)\log^2\bigg(\frac{a}{a-1}\bigg)-\frac{2}{3}\log^3\bigg(\frac{a}{a-1}\bigg)-2Li_3\bigg(\frac{a-1}{a}\bigg)\notag\\
&-2\log\bigg(\frac{a}{a-1}\bigg)Li_2\bigg(\frac{a-1}{a}\bigg)-\sum_{k=1}^{\infty}\frac{2(-1)^kH_{k-1}}{k^2(a-1)^k\binom{n+k}{k}}\notag\\
&+2\log(a-1)\log\bigg(\frac{a}{a-1}\bigg)\sum_{k=1}^{n}\frac{a^k}{k}.
\end{align}
Here $\psi$ is the digamma function, and $\zeta$ is the Riemann zeta function.
\end{thm}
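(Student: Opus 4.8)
The plan is to reduce both identities to the single-logarithm results already obtained, by completing the square inside the logarithm and then expanding $\log^2(1-u)$ as a power series. For \eqref{e:33} I would first write $\log(x+a)=\log(a+1)+\log\bigl(1-\tfrac{1-x}{1+a}\bigr)$ and expand the square, splitting $\int_0^1\frac{1-x^s}{1-x}\log^2(x+a)\,dx$ into three pieces. The constant piece $\log^2(a+1)\int_0^1\frac{1-x^s}{1-x}\,dx$ equals $(\gamma+\psi(s+1))\log^2(a+1)$ by the identity established in the proof of Theorem 2.1, and the cross term $2\log(a+1)\int_0^1\frac{1-x^s}{1-x}\log\bigl(1-\tfrac{1-x}{1+a}\bigr)dx$ is given verbatim by Lemma 1.6, contributing $2\log(a+1)\sum_{k=1}^\infty\frac{1}{k^2(a+1)^k\binom{s+k}{k}}-2\log(a+1)Li_2\bigl(\tfrac{1}{a+1}\bigr)$.

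For the last piece $\int_0^1\frac{1-x^s}{1-x}\log^2\bigl(1-\tfrac{1-x}{1+a}\bigr)dx$ I would use the elementary expansion $\log^2(1-u)=2\sum_{k\ge1}\frac{H_{k-1}}{k}u^k$ (its coefficients being $\sum_{j=1}^{k-1}\frac{1}{j(k-j)}=\frac{2H_{k-1}}{k}$), justify term-by-term integration by a Weierstrass $M$-test exactly as in Lemma 1.6, and evaluate each term with the beta integrals \eqref{e:20}. This gives $2\sum_{k\ge1}\frac{H_{k-1}}{k^2(a+1)^k}-2\sum_{k\ge1}\frac{H_{k-1}}{k^2(a+1)^k\binom{s+k}{k}}$, the second sum being the binomial sum in \eqref{e:33}. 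To handle the first sum I would note that \eqref{e:23} and the definition of $Li_2$ give $\sum_{k\ge1}\frac{H_{k-1}t^k}{k}=\frac12\log^2(1-t)$, whence $\sum_{k\ge1}\frac{H_{k-1}z^k}{k^2}=\frac12\int_0^z\frac{\log^2(1-t)}{t}\,dt$; with $z=\frac{1}{a+1}$ and $1-z=\frac{a}{a+1}$ the standard evaluation $\int_0^z\frac{\log^2(1-t)}{t}dt=\log^2(1-z)\log z+2\log(1-z)Li_2(1-z)-2Li_3(1-z)+2\zeta(3)$ supplies precisely the terms $2\zeta(3)$, $-\log(a+1)\log^2\bigl(\tfrac{a}{a+1}\bigr)$, $2\log\bigl(\tfrac{a}{a+1}\bigr)Li_2\bigl(\tfrac{a}{a+1}\bigr)$ and $-2Li_3\bigl(\tfrac{a}{a+1}\bigr)$. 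Collecting the pieces yields \eqref{e:33}.

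For \eqref{e:34} I would run the same scheme with $\log(a-x)=\log(a-1)+\log\bigl(1+\tfrac{1-x}{a-1}\bigr)$, which is legitimate for $a\ge2$ since then $0\le\tfrac{1-x}{a-1}\le1$ on $[0,1]$. The square term contributes $H_n\log^2(a-1)$ because $\int_0^1\frac{1-x^n}{1-x}dx=H_n$. For the cross term I would, instead of Lemma 1.6, integrate directly using a $\log(a-x)$-analogue of Lemma 1.7, namely $\int_0^1 x^k\log(a-x)\,dx=\frac{\log(a-1)}{k+1}+\frac{a^{k+1}}{k+1}\log\bigl(\tfrac{a}{a-1}\bigr)-\frac{a^{k+1}}{k+1}\sum_{i=1}^{k+1}\frac{(1/a)^i}{i}$, obtained by the same integration by parts and geometric-series argument. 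Summing over $k=0,\dots,n-1$ and applying Lemma 1.2 with $a$ replaced by $-a$ to the resulting double sum gives $\int_0^1\frac{1-x^n}{1-x}\log\bigl(1+\tfrac{1-x}{a-1}\bigr)dx=\log\bigl(\tfrac{a}{a-1}\bigr)\sum_{k=1}^n\frac{a^k}{k}-H_n^{(2)}-\sum_{k=1}^n\frac{(H_k+H_{n-k}-H_n)a^k}{k}$; multiplying by $2\log(a-1)$ produces exactly the three finite-sum terms in \eqref{e:34}. It is precisely this use of the closed, integer-$n$ form (rather than an infinite binomial sum) that forces \eqref{e:34} to be stated for $n\in\mathbb{N}$, whereas \eqref{e:33} survives for real $s$.

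Finally, for the quadratic piece of \eqref{e:34} I would expand $\log^2(1+v)=2\sum_{k\ge1}\frac{(-1)^kH_{k-1}}{k}v^k$ with $v=\frac{1-x}{a-1}$, integrate termwise, and reach $2\sum_{k\ge1}\frac{(-1)^kH_{k-1}}{k^2(a-1)^k}$ minus the binomial sum $2\sum_{k\ge1}\frac{(-1)^kH_{k-1}}{k^2(a-1)^k\binom{n+k}{k}}$ displayed in \eqref{e:34}. The remaining non-binomial sum equals $\int_0^z\frac{\log^2(1-t)}{t}\,dt$ evaluated at $z=-\frac{1}{a-1}$, and this is where the genuine work lies: now $z<0$ and $1-z=\frac{a}{a-1}>1$, so the closed form used for \eqref{e:33} lands on di- and trilogarithms of argument exceeding $1$. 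The hard part will be to push these back into the unit interval by the inversion (Landen) functional equations for $Li_2$ and $Li_3$, thereby re-expressing the answer through $Li_3\bigl(\tfrac{a-1}{a}\bigr)$, $Li_2\bigl(\tfrac{a-1}{a}\bigr)$ and elementary logarithms and producing the terms $2\zeta(3)$, $-\log(a-1)\log^2\bigl(\tfrac{a}{a-1}\bigr)$, $-\tfrac23\log^3\bigl(\tfrac{a}{a-1}\bigr)$, $-2Li_3\bigl(\tfrac{a-1}{a}\bigr)$ and $-2\log\bigl(\tfrac{a}{a-1}\bigr)Li_2\bigl(\tfrac{a-1}{a}\bigr)$. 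Throughout, the convergence and boundary (Abel-limit) justifications are routine repetitions of those in Lemmas 1.6 and 1.8, so the single real obstacle is this branch-careful closed-form evaluation of the non-binomial Euler sum $\sum_{k\ge1}\frac{H_{k-1}z^k}{k^2}$ for negative $z$.
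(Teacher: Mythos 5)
Your treatment of (\ref{e:33}) coincides with the paper's: the same splitting $\log(x+a)=\log(a+1)+\log\bigl(1-\tfrac{1-x}{1+a}\bigr)$, the constant piece via $\int_0^1\frac{1-x^s}{1-x}dx=\gamma+\psi(s+1)$, the cross term via Lemma 1.6, the squared term by termwise integration of $\log^2(1-u)=2\sum_{k\ge1}H_{k-1}u^k/k$ (the paper's (\ref{e:36}), reindexed) against the beta integrals (\ref{e:20}), and the same closed form for $\sum_{k\ge1}H_{k-1}x^k/k^2$, which the paper simply quotes as (\ref{e:40}) rather than rederiving from $\tfrac12\int_0^z\log^2(1-t)\,t^{-1}dt$. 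For (\ref{e:34}), however, you take a genuinely different route. The paper substitutes $a\mapsto-a$, $s=n$ into (\ref{e:33}), so that both sides become complex, and then equates real parts ((\ref{e:41})--(\ref{e:42})) before invoking (\ref{e:43}) and the alternating Euler-sum formula (\ref{e:44}); this tacitly assumes that (\ref{e:33}), proved for real $a>0$, survives the complex substitution, an analytic-continuation step the paper does not spell out. You instead stay entirely real: decompose $\log(a-x)=\log(a-1)+\log\bigl(1+\tfrac{1-x}{a-1}\bigr)$ (legitimate for $a\ge2$), evaluate the cross term in closed form by the $\log(a-x)$ analogue of Lemma 1.7 followed by Lemma 1.2 with $a\mapsto-a$, and expand the squared logarithm termwise; your three pieces do reproduce exactly the terms of (\ref{e:34}), including the three finite sums multiplied by $\log(a-1)$. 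The one step you leave open, the closed form of $2\sum_{k\ge1}(-1)^kH_{k-1}x^k/k^2$ at $x=1/(a-1)$, which you propose to extract from the dilogarithm--trilogarithm inversion formulas, is not actually an obstacle: it is precisely the paper's equation (\ref{e:44}), quoted from \cite{10} for $0\le x\le1$ (this is exactly where the hypothesis $a\ge2$ enters), and the paper does not prove it either. Citing that identity closes your argument, so your proposal is correct and matches the paper's standard of rigor; what it buys is the elimination of the complex-logarithm detour, at the cost of redoing two elementary integral evaluations that the paper recycles from (\ref{e:33}).
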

\begin{proof} Clearly we have
\begin{align}\label{e:35}
&\int_{0}^{1}\frac{1-x^s}{1-x}\log^2(x+a)dx=\int_{0}^{1}\frac{1-x^s}{1-x}\log^2\left[(a+1)\left(1-\frac{1-x}{a+1}\right)\right]dx \nonumber\\
&=\log^2(a+1)\int_{0}^{1}\frac{1-x^s}{1-x}dx+\int_{0}^{1}\frac{1-x^s}{1-x}\log^2\left(1-\frac{1-x}{a+1}\right)dx\nonumber\\
&+2\log(a+1)\int_{0}^{1}\frac{1-x^s}{1-x}\log\left(1-\frac{1-x}{a+1}\right)dx.
\end{align}
Using the power series
\begin{equation}\label{e:36}
\log^2(1-t)=2\sum_{k=1}^{\infty}\frac{t^{k+1}H_k}{k+1},
\end{equation}
we get
\begin{align*}
\int_{0}^{1}\frac{1-x^s}{1-x}\log^2\left(1-\frac{1-x}{a+1}\right)dx&=2\int_{0}^{1}\frac{1-x^s}{1-x}\sum_{k=1}^{\infty}\frac{H_k}{k+1}\frac{(1-x)^{k+1}}{(a+1)^{k+1}}dx\\
&=2\int_{0}^{1}(1-x^s)\sum_{k=1}^{\infty}\frac{H_k}{k+1}\frac{(1-x)^{k}}{(a+1)^{k+1}}dx.
\end{align*}
Since
$$
\sum_{k=1}^{\infty}\bigg|\frac{H_k}{k+1}\frac{(1-x^s)(1-x)^{k+1}}{(a+1)^{k+1}}\bigg|\leq \sum_{k=1}^{\infty}\frac{H_k}{(k+1)(a+1)^{k+1}}<\infty,
$$
we can interchanging the order of the summation and integration, and upon simplification,
\begin{align}\label{e:37}
&\int_{0}^{1}\frac{1-x^s}{1-x}\log^2\left(1-\frac{1-x}{a+1}\right)dx\notag\\
&=2\sum_{k=1}^{\infty}\frac{H_k}{(k+1)(a+1)^{k+1}}\int_{0}^{1}(1-x^s)(1-x)^kdx.
\end{align}
Using (\ref{e:20}) with $k\to k+1$ and shifting the index $k$, and then upon simplifying, (\ref{e:37}) can be written as follows:
\begin{align}\label{e:38}
\int_{0}^{1}\frac{1-x^s}{1-x}\log^2\left(1-\frac{1-x}{a+1}\right)dx=\sum_{k=1}^{\infty}\frac{2H_{k-1}}{k^2(a+1)^{k}}-\sum_{k=1}^{\infty}\frac{2H_{k-1}}{k^2(a+1)^{k}\binom{s+k}{k}}.
\end{align}
Thus, using (\ref{e:38}) and Lemma 1.6 in (\ref{e:35}), and noting that $\int_{0}^{1}\frac{1-x^s}{1-x}dx =\gamma+\psi(s+1)$, we arrive at
\begin{align}\label{e:39}
&\int_{0}^{1}\frac{1-x^s}{1-x}\log^2(x+a)dx=(\gamma+\psi(s+1))\log^2(a+1)\notag\\
&+2\log(a+1)\sum_{k=1}^{\infty}\frac{1}{k^2(a+1)^k\binom{s+k}{k}}-2\log(a+1)Li_2\bigg(\frac{1}{a+1}\bigg)\notag\\
&+\sum_{k=1}^{\infty}\frac{2H_{k-1}}{k^2(a+1)^k}-\sum_{k=1}^{\infty}\frac{2H_{k-1}}{k^2(a+1)^k\binom{s+k}{k}}.
\end{align}
According to  \cite{4}, we have, for $0\leq x\leq 1$,
\begin{align}\label{e:40}
2\sum_{k=1}^{\infty}\frac{H_{k-1}}{k^2}x^k&=\log x\log^2(1-x)+2\log(1-x)Li_2(1-x)\notag\\
&-2Li_3(1-x)+2\zeta(3).
\end{align}
Hence, applying this formula with $x=1/(a+1)$ in equation (\ref{e:39}) we complete the proof of (\ref{e:33}). In order to prove (\ref{e:34}) replace $a$ by $-a$ in (\ref{e:33}) and put $s=n\in\mathbb{N}$. Then we obtain
\begin{align}\label{e:41}
&\int_{0}^{1}\frac{1-x^n}{1-x}\left[\log^2(a-x)-\pi^2+2\pi i\log(a-x)\right]dx\notag\\
&=H_n\left(\log^2(a-1)-\pi^2+2\pi i\log(a-1)\right)-2\sum_{k=1}^{\infty}\frac{H_{k-1}}{k^2(a-1)^k\binom{n+k}{k}}\notag\\
&+2(\log(a-1)+\pi i)\sum_{k=1}^{\infty}\frac{1}{k^2(a-1)^k\binom{n+k}{k}}\notag\\
&-2(\log(a-1)+\pi i)Li_2\bigg(\frac{1}{1-a}\bigg)+\sum_{k=1}^{\infty}\frac{2(-1)^kH_{k-1}}{k^2(a-1)^k}.\notag\\
\end{align}
Equating the real parts of each side of this equation one gets
\begin{align}\label{e:42}
&\int_{0}^{1}\frac{1-x^n}{1-x}\log^2(a-x)dx=H_n\log^2(a-1)+\sum_{k=1}^{\infty}\frac{2(-1)^kH_{k-1}}{k^2(a-1)^k}\notag\\
&-2\log(a-1)Li_2\left(\frac{1}{1-a}\right)-\sum_{k=1}^{\infty}\frac{2(-1)^kH_{k-1}}{k^2(a-1)^k\binom{n+k}{k}}\notag\\
&+2\log(a-1)\sum_{k=1}^{\infty}\frac{1}{k^2(1-a)^k\binom{n+k}{k}}.
\end{align}
Replacing $a$ by $-a$ in (\ref{e:29}) and using Lemma 1.2 we get for $a\in\mathbb{C}\backslash (0,2)$ and $n\in\mathbb{N}$
\begin{align}\label{e:43}
&\sum_{k=1}^{\infty}\frac{1}{k^2(1-a)^k\binom{n+k}{k}}=Li_2\bigg(\frac{1}{1-a}\bigg)+\log\bigg(\frac{a}{a-1}\bigg)\sum_{k=1}^{n}\frac{a^k}{k}\notag\\
&-H_n^{(2)}-\sum_{k=1}^{n}\frac{(H_k+H_{n-k}-H_n)a^k}{k}.
\end{align}
On the other hand, we know from \cite{10} that for $0\leq x\leq 1$

\begin{align}\label{e:44}
&\sum_{k=1}^{\infty}\frac{2(-1)^kH_{k-1}x^k}{k^2}=\log x\log^2(1+x)-\frac{2}{3}\log^3(1+x)\notag\\
&-2Li_3\bigg(\frac{1}{1+x}\bigg)-2\log(1+x)Li_2\left(\frac{1}{1+x}\right)+2\zeta(3).
\end{align}
Using (\ref{e:43}) and (\ref{e:44}) with $x=1/(1-a)$ in (\ref{e:42}) we see that  (\ref{e:34}) is valid.
\end{proof}
If we set $s=n\in\mathbb{N}$ in (\ref{e:33}) and use (\ref{e:10}) with $x=1/(a+1)$ and Corollary 2.4 and Lemma 1.2, we get the following conclusion.
\begin{cor}  For all $a\in\mathbb{R}\backslash (-2,0)$,and $n\in\mathbb{N}$, we have
\begin{align}\label{e:45}
&\int_{0}^{1}\frac{1-x^n}{1-x}\log^2(x+a)dx=2\bigg(\zeta(2)-Li_2\bigg(\frac{1}{a+1}\bigg)\bigg)\log\bigg(\frac{a}{a+1}\bigg)\notag\\
&+\log(a+1)\log^2\bigg(\frac{a}{a+1}\bigg)-2Li_3\bigg(\frac{a}{a+1}\bigg)+2\zeta(3)+H_n\log^2(a+1)\notag\\
&-2\log(a+1)\log(1+1/a)\sum_{k=1}^{n}\frac{(-a)^k}{k}-2H_n^{(2)}\log(a+1)\notag\\
&-2\sum_{k=1}^{\infty}\frac{H_{k-1}}{k^2(a+1)^{k}\binom{n+k}{k}}-2\log(a+1)\sum_{k=1}^{n}\frac{(H_k+H_{n-k}-H_n)(-a)^k}{k}.
\end{align}
\end{cor}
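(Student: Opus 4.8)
The plan is to start from the general identity (\ref{e:33}) of Theorem 2.6, specialize the real parameter to $s=n\in\mathbb{N}$, and then eliminate the two non-elementary ingredients on its right-hand side---namely the dilogarithm $Li_2\big(\frac{a}{a+1}\big)$ and the convergent binomial sum $\sum_{k\ge1}\frac{1}{k^2(a+1)^k\binom{n+k}{k}}$---in favour of the quantities appearing in the target formula (\ref{e:45}). Setting $s=n$ immediately replaces $\gamma+\psi(s+1)$ by $H_n$ via the relation $\gamma+\psi(n+1)=H_n$ recorded in the introduction, so the terms $H_n\log^2(a+1)$, $2\zeta(3)$, $-2Li_3\big(\frac{a}{a+1}\big)$ and the series $-\sum_{k\ge1}\frac{2H_{k-1}}{k^2(a+1)^k\binom{n+k}{k}}$ already match (\ref{e:45}) verbatim. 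It then remains only to recast the block $-\log(a+1)\log^2\big(\frac{a}{a+1}\big)-2\log(a+1)Li_2\big(\frac{1}{a+1}\big)+2\log\big(\frac{a}{a+1}\big)Li_2\big(\frac{a}{a+1}\big)+2\log(a+1)\sum_{k\ge1}\frac{1}{k^2(a+1)^k\binom{n+k}{k}}$.

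First I would substitute Corollary 2.4, that is (\ref{e:29}), for the binomial sum. Multiplying that identity by $2\log(a+1)$ produces a term $+2\log(a+1)Li_2\big(\frac{1}{a+1}\big)$ that exactly cancels the $-2\log(a+1)Li_2\big(\frac{1}{a+1}\big)$ already present; what survives from (\ref{e:29}) are precisely the three finite contributions $-2\log(a+1)\log(1+1/a)\sum_{k=1}^{n}\frac{(-a)^k}{k}$, $-2H_n^{(2)}\log(a+1)$ and $-2\log(a+1)\sum_{k=1}^{n}\frac{(H_k+H_{n-k}-H_n)(-a)^k}{k}$, which are exactly the third and fourth lines of (\ref{e:45}). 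Here Lemma 1.2 is what identifies the pair $H_n^{(2)}+\sum_{k=1}^n\frac{(H_k+H_{n-k}-H_n)(-a)^k}{k}$ with the single double sum underlying the derivation of (\ref{e:29}), so the two forms are freely interchangeable.

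Next I would apply the dilogarithm reflection formula (\ref{e:10}) with $x=\frac{1}{a+1}$, noting $1-x=\frac{a}{a+1}$ and $\log x=-\log(a+1)$, to obtain $Li_2\big(\frac{a}{a+1}\big)=\zeta(2)+\log(a+1)\log\big(\frac{a}{a+1}\big)-Li_2\big(\frac{1}{a+1}\big)$. Feeding this into $2\log\big(\frac{a}{a+1}\big)Li_2\big(\frac{a}{a+1}\big)$ and adding the leftover $-\log(a+1)\log^2\big(\frac{a}{a+1}\big)$ collapses the two squared-logarithm contributions into a single $+\log(a+1)\log^2\big(\frac{a}{a+1}\big)$, with sign reversed relative to the starting term, while the remaining pieces assemble into $2\big(\zeta(2)-Li_2\big(\frac{1}{a+1}\big)\big)\log\big(\frac{a}{a+1}\big)$---exactly the first two terms of (\ref{e:45}). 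Collecting everything then yields (\ref{e:45}).

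The argument is therefore pure bookkeeping once (\ref{e:33}), Corollary 2.4 and (\ref{e:10}) are in hand; the only genuinely delicate point is tracking signs. In particular one must verify that the coefficient of $\log(a+1)\log^2\big(\frac{a}{a+1}\big)$ flips from $-1$ to $+1$ after the reflection substitution, and that the three occurrences of $Li_2\big(\frac{1}{a+1}\big)$---one from the original expression, one from the binomial-sum substitution, and one from the reflection formula---combine with the right multiplicities, so that the first two cancel and the last survives only inside the clean factor $2\big(\zeta(2)-Li_2\big(\frac{1}{a+1}\big)\big)$. I would also confirm that the restriction $a\in\mathbb{R}\backslash(-2,0)$ keeps $\frac{1}{a+1}$ and $\frac{a}{a+1}$ within the ranges where (\ref{e:10}) and the series of Corollary 2.4 are valid.
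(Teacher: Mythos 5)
Your proposal is correct and follows essentially the same route as the paper, whose proof of (\ref{e:45}) is exactly this: set $s=n$ in (\ref{e:33}), substitute Corollary 2.4 (justified via Lemma 1.2) for the binomial sum, and apply the reflection formula (\ref{e:10}) with $x=1/(a+1)$. Your bookkeeping also checks out: the three occurrences of $Li_2\left(\frac{1}{a+1}\right)$ combine as you describe, and the coefficient of $\log(a+1)\log^2\left(\frac{a}{a+1}\right)$ indeed flips from $-1$ to $+1$ after the substitution.
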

\begin{thm} For all $n\in\mathbb{N}$ and $a\in\mathbb{R}\backslash[-1,1)$ the following equality holds:
\begin{align}\label{e:46}
&\int_{0}^{1}\frac{1-x^n}{1-x}\log^2(x+a)dx=H_n\log^2(a+1)-2\log(a+1)H_n^{(2)}\notag\\
&-2\log(a+1)\sum_{k=1}^{n}\frac{(H_k+H_{n-k}-H_n)(-a)^k}{k}\notag\\
&+2\log(1+1/a)\sum_{k=1}^{n}\frac{(-a)^kH_k}{k}+\left(\log^2a-\log^2(a+1)\right)\sum_{k=1}^{n}\frac{(-a)^k}{k}\notag\\
&+2\sum_{p=0}^{n-1}\bigg\{\sum_{k=1}^{n-p}\bigg(\frac{H_{p+k}-H_k}{(p+k)k}+\frac{1}{k^2(p+k)}\bigg)\bigg\}(-a)^p.
\end{align}
\end{thm}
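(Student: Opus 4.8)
The plan is to bypass the polylogarithmic formulas (\ref{e:33}) and (\ref{e:45}) and instead produce a purely finite expression, exactly as (\ref{e:28}) does for the single-logarithm integral. First I would expand the kernel as $\frac{1-x^n}{1-x}=\sum_{k=0}^{n-1}x^k$, so that
\[
\int_{0}^{1}\frac{1-x^n}{1-x}\log^2(x+a)\,dx=\sum_{k=0}^{n-1}\int_{0}^{1}x^k\log^2(x+a)\,dx,
\]
which reduces everything to a closed form for the elementary integral $\int_{0}^{1}x^k\log^2(x+a)\,dx$. The key tool is integration by parts combined with Lemma 1.7, which already evaluates $\int_{0}^{1}x^m\log(x+a)\,dx$.

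Next I would integrate by parts using the antiderivative $v(x)=\frac{x^{k+1}-(-a)^{k+1}}{k+1}$ of $x^k$, chosen so that $\frac{v(x)}{x+a}=\frac{1}{k+1}\sum_{j=0}^{k}(-a)^j x^{k-j}$ is a polynomial. This gives
\begin{align*}
\int_{0}^{1}x^k\log^2(x+a)\,dx&=\frac{1-(-a)^{k+1}}{k+1}\log^2(a+1)+\frac{(-a)^{k+1}}{k+1}\log^2 a\\
&\quad-\frac{2}{k+1}\sum_{j=0}^{k}(-a)^j\int_{0}^{1}x^{k-j}\log(x+a)\,dx,
\end{align*}
the boundary evaluation at $x=0$ producing the $\log^2 a$ term (read through the principal branch when $a<-1$). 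Applying Lemma 1.7 to each inner integral and using $(-a)^j(-a)^{k-j+1}=(-a)^{k+1}$, the right-hand side splits into a $\log(a+1)$-piece carrying the double sum $\sum_{m=0}^{k}\frac{(-a)^{k-m}}{m+1}$, a piece carrying the triple sum $\sum_{m=0}^{k}\frac{1}{m+1}\sum_{i=1}^{m+1}\frac{(-1/a)^i}{i}$, and a $\log(1+1/a)$-piece with factor $H_{k+1}$.

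Finally I would sum over $k=0,\dots,n-1$ and reindex with $K=k+1$. The boundary terms collapse to $H_n\log^2(a+1)+\left(\log^2 a-\log^2(a+1)\right)\sum_{k=1}^{n}\frac{(-a)^k}{k}$, and the $\log(1+1/a)$-piece collapses to $2\log(1+1/a)\sum_{k=1}^{n}\frac{(-a)^k H_k}{k}$. The two genuinely combinatorial pieces are matched to the earlier lemmas. After reindexing, the $\log(a+1)$ double sum becomes the symmetric sum
\[
\sum_{k=0}^{n-1}\frac{1}{k+1}\sum_{m=0}^{k}\frac{(-a)^{k-m}}{m+1}=\sum_{1\le M\le K\le n}\frac{(-a)^{K-M}}{KM}=\sum_{K=1}^{n}\frac{(-a)^K}{K}\sum_{M=1}^{K}\frac{(-1/a)^M}{M},
\]
which Lemma 1.2 rewrites as $H_n^{(2)}+\sum_{k=1}^{n}\frac{(H_k+H_{n-k}-H_n)(-a)^k}{k}$; likewise the triple sum becomes $\sum_{K=1}^{n}\frac{(-a)^K}{K}\sum_{j=1}^{K}\frac{1}{j}\sum_{i=1}^{j}\frac{(-1/a)^i}{i}$, which is exactly the left-hand side of Lemma 1.4, and hence equals the right-hand side of (\ref{e:14}). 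Collecting all contributions (with the correct factors of $2$) yields the claimed identity (\ref{e:46}). I expect the main obstacle to be organizational rather than conceptual: keeping the several reindexings consistent and, in particular, verifying the symmetric double-sum identity displayed above so that Lemma 1.2 applies, and matching the triple sum to the precise shape demanded by Lemma 1.4.
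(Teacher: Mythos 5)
Your proposal is correct. I checked the key steps: the antiderivative $v(x)=\frac{x^{k+1}-(-a)^{k+1}}{k+1}$ does make $v(x)/(x+a)$ equal to the polynomial $\frac{1}{k+1}\sum_{j=0}^{k}(-a)^jx^{k-j}$, the boundary terms at $x=1$ and $x=0$ are as you state, the three reindexed pieces (the $\log(a+1)$ double sum, the triple sum, and the $\log(1+1/a)$ piece with factor $H_{k+1}$) come out exactly as claimed, and the final applications of Lemma 1.2 and Lemma 1.4 reproduce every term of (\ref{e:46}). However, your route is genuinely different from the paper's. The paper expands $\log^2(x+a)=\left(\log a+\log(1+x/a)\right)^2$, evaluates the cross term by Lemma 1.7, and is then left with $\sum_{k=0}^{n-1}\int_0^1 x^k\log^2(1+x/a)\,dx$, which it handles by expanding $\log^2$ into the harmonic-number power series (\ref{e:36}), integrating term by term, and invoking the infinite-series identity of Lemma 1.8 with $x=-1/a$ --- a lemma whose own proof requires rearranging double series, uniform convergence estimates, and Abel's limit theorem. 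Your integration by parts replaces the squared logarithm by single logarithms at the outset, so the entire computation stays inside finite sums: Lemma 1.7, then Lemmas 1.2 and 1.4, with no convergence arguments at all. What the paper's route buys is Lemma 1.8 itself, which is stated as a generalization of a corollary from \cite{32} and has independent interest; what your route buys is a shorter, purely algebraic proof that parallels the paper's own treatment of the single-logarithm integral (\ref{e:28}) and bypasses the series machinery entirely. One caveat common to both arguments, which you correctly flag: for $a<-1$ the quantities $\log(x+a)$, $\log a$, and $\log(a+1)$ must be read through the principal branch (Lemma 1.7 already requires this), and the imaginary parts then cancel consistently in either approach.
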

\begin{proof} Clearly we have
\begin{align}\label{e:47}
&\int_{0}^{1}\frac{1-x^n}{1-x}\log^2(x+a)dx=\sum_{k=0}^{n-1}\int_{0}^{1}x^k\log^2(x+a)dx\notag\\
&=\sum_{k=0}^{n-1}\int_{0}^{1}x^k\left(\log a+\log(1+x/a)\right)^2dx.
\end{align}
Expanding the last term in the brackets, after a simple computation, we get
\begin{align}\label{e:48}
&\int_{0}^{1}\frac{1-x^n}{1-x}\log^2(x+a)dx=2\log a\sum_{k=0}^{n-1}\int_{0}^{1}x^k\log(x+a)dx\notag\\
&-H_n\log^2a+\sum_{k=0}^{n-1}\int_{0}^{1}x^k\log^2\bigg(1+\frac{x}{a}\bigg)dx.
\end{align}
From Lemma 1.7 it follows that
\begin{align*}
\sum_{k=0}^{n-1}\int_{0}^{1}x^k\log (x+a)dx &=\sum_{k=0}^{n-1}\bigg[\frac{\log(a+1)}{k+1}-\frac{(-a)^{k+1}}{k+1}\sum_{j=0}^{k+1}\frac{(-1/a)^j}{j}\\
&-\frac{(-a)^{k+1}}{k+1}\log(1+1/a)\bigg].
\end{align*}
After simplifying, this becomes
\begin{align}\label{e:49}
\sum_{k=0}^{n-1}\int_{0}^{1}x^k\log (x+a)dx&=H_n\log(a+1)-\sum_{k=1}^{n}\frac{(-a)^{k}}{k}\sum_{j=1}^{k}\frac{(-1/a)^j}{j}\notag\\
&-\log(1+1/a)\sum_{k=1}^{n}\frac{(-a)^{k}}{k}.
\end{align}
By the help of (\ref{e:36}), we can easily show that
\begin{align*}
\int_{0}^{1}x^k\log^2\left(1+\frac{x}{a}\right)dx=2\int_{0}^{1}x^k\sum_{j=1}^{\infty}\frac{(-x/a)^{j+1}H_j}{j+1}dx.
\end{align*}
Reversing the order of the summation and integration, we get, after an easy computation
\begin{align}\label{e:50}
\int_{0}^{1}x^k\log^2\left(1+\frac{x}{a}\right)dx=2\sum_{j=1}^{\infty}\frac{(-1/a)^{j+1}H_j}{(j+1)(k+j+2)}.
\end{align}
Summing both sides of this equation from $k=0$ to $k=n-1$, and using Lemma 1.8 with $x=-1/a$ and $k\to k+1$, we get
\begin{align}\label{e:51}
&\sum_{k=0}^{n-1}\int_{0}^{1}x^k\log^2\left(1+\frac{x}{a}\right)dx=2\sum_{k=0}^{n-1}\sum_{j=1}^{\infty}\frac{(-1/a)^{j+1}H_j}{(j+1)(k+j+2)}\notag\\
&=\bigg(H_n-\sum_{k=1}^{n}\frac{(-a)^k}{k}\bigg)\log^2(1+1/a)\notag\\
&+2\log(1+1/a)\bigg(\sum_{k=1}^{n}\frac{(-a)^kH_k}{k}-\sum_{k=1}^{n}\frac{(-a)^k}{k}\sum_{j=1}^{k}\frac{(-1/a)^j}{j}\bigg)\notag\\
&+2\sum_{k=1}^{n}\frac{(-a)^k}{k}\sum_{j=1}^{k}\frac{1}{j}\sum_{p=1}^{j}\frac{(-1/a)^p}{p}.
\end{align}
Substituting (\ref{e:49}) and (\ref{e:51}) in (\ref{e:48}), and taking into account Lemma 1.2 and Lemma 1.4 the proof of Theorem 2.9 follows.
\end{proof}
Replacing $a$ by $-a$ in (\ref{e:45}), and equating the real parts of each side of the resulting equation we get
\begin{cor} Let $n\in\mathbb{N}$ and $a\geq 2$. Then we have
\begin{align}\label{e:52}
&\int_{0}^{1}\frac{1-x^n}{1-x}\log^2(a-x)dx=H_n\log^2(a-1)-2 H_n^{(2)}\log(a-1)\notag\\
&-2\log(a-1)\sum_{k=1}^{n}\frac{(H_k+H_{n-k}-H_n)a^k}{k}+2\log(1-1/a)\sum_{k=1}^{n}\frac{H_ka^k}{k}\notag\\
&+2\sum_{p=0}^{n-1}\bigg\{\sum_{k=1}^{n-p}\bigg(\frac{H_{p+k}-H_k}{(p+k)k}+\frac{1}{k^2(p+k)}\bigg)\bigg\}a^p\notag\\
&+\left(\log^2a-\log^2(a-1)\right)\sum_{k=1}^{n}\frac{a^k}{k}.
\end{align}
\end{cor}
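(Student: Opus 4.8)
The plan is to derive (\ref{e:52}) from the polynomial evaluation (\ref{e:46}) of Theorem 2.9, since its right-hand side already has precisely the shape of (\ref{e:52}) and involves no polylogarithmic or zeta terms to be disposed of. I would start from (\ref{e:46}), which evaluates $\int_{0}^{1}\frac{1-x^{n}}{1-x}\log^{2}(x+a)\,dx$ for $a\in\mathbb{R}\backslash[-1,1)$, and substitute $a\mapsto -a$. For $a\geq 2$ the new parameter $-a\leq -2$ still lies in $\mathbb{R}\backslash[-1,1)$, so the identity stays valid; moreover each power $(-a)^{k}$ turns into the real quantity $a^{k}$, the factor $\log(1+1/a)$ becomes $\log(1-1/a)$ (which is real because $a>1$), and the double sum in the last line of (\ref{e:46}) passes over to $2\sum_{p=0}^{n-1}\{\cdots\}a^{p}$ with its form intact.

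The only genuinely analytic point is that several logarithms now carry negative real arguments, which I would handle through the principal branch. For $x\in[0,1]$ and $a\geq 2$ we have $x-a<0$, so $\log(x-a)=\log(a-x)+i\pi$ and hence
\begin{equation*}
\log^{2}(x-a)=\log^{2}(a-x)-\pi^{2}+2\pi i\,\log(a-x).
\end{equation*}
The same rule governs the right-hand side: $\log(a+1)\mapsto\log(1-a)=\log(a-1)+i\pi$, while $\log a\mapsto\log(-a)=\log a+i\pi$, so that $\log^{2}a\mapsto\log^{2}a-\pi^{2}+2\pi i\,\log a$. After these replacements the transformed identity is an equation between complex numbers, and the proof is completed by equating real parts.

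The delicate part, and the step I expect to be the main obstacle, is the bookkeeping of the $\pi^{2}$ contributions. On the left the constant $-\pi^{2}$ inside $\log^{2}(x-a)$ integrates against $\frac{1-x^{n}}{1-x}$ and, by $\int_{0}^{1}\frac{1-x^{n}}{1-x}\,dx=H_{n}$, produces $-\pi^{2}H_{n}$. On the right the leading term $H_{n}\log^{2}(1-a)$ likewise produces $-\pi^{2}H_{n}$, whereas inside $\log^{2}(-a)-\log^{2}(1-a)$ the two $-\pi^{2}$ pieces cancel against each other, so no further $\pi^{2}$ survives. Hence the $-\pi^{2}H_{n}$ terms occur identically on both sides and drop out, leaving exactly (\ref{e:52}). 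The remainder is routine collection of real parts: the real part of $H_{n}\log^{2}(1-a)$ is $H_{n}(\log^{2}(a-1)-\pi^{2})$, each term linear in $\log(1-a)$ contributes only its $\log(a-1)$ part (the purely imaginary pieces, such as $-2\pi i\,H_{n}^{(2)}$, being discarded), and the combination $\log^{2}(-a)-\log^{2}(1-a)$ contributes its real part $\log^{2}a-\log^{2}(a-1)$; assembling these against the already-real sums yields the stated formula.
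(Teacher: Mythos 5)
Your proof is correct and is essentially the paper's own argument: the paper derives (\ref{e:52}) by replacing $a$ with $-a$ in the polynomial evaluation of Theorem 2.9 and equating real parts (the paper's text cites (\ref{e:45}), but this is evidently a typo for (\ref{e:46}), since only the latter yields (\ref{e:52}) free of polylogarithmic terms). Your principal-branch bookkeeping, including the cancellation of the $-\pi^{2}H_{n}$ contributions on both sides via $\int_{0}^{1}\frac{1-x^{n}}{1-x}\,dx=H_{n}$, matches the intended derivation.
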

Equating the right-hand sides of  (\ref{e:45}) and  (\ref{e:46}), and setting $t=1/(a+1)$ we obtain, in the light of Lemma 1.4, the following conclusion.
\begin{thm} For $n\in\mathbb{N}$ and $0<t\leq 1$, we have
\begin{align}\label{e:53}
&\sum_{k=1}^{\infty}\frac{H_{k-1}t^k}{k^2\binom{n+k}{k}}=\zeta(3)-Li_3(1-t)-\frac{1}{2}\log^2(1-t)\sum_{k=1}^{n}\frac{1}{k}\bigg(\frac{t-1}{t}\bigg)^k\notag\\
&+(\zeta(2)-Li_2(t))\log(1-t)+\log(1-t)\sum_{k=1}^{n}\frac{H_k}{k}\bigg(\frac{t-1}{t}\bigg)^k\notag\\
&-\sum_{p= 0}^{n-1}\bigg\{\sum_{k=1}^{n-p}\bigg(\frac{H_{p+k}-H_k}{(p+k)k}+\frac{1}{k^2(p+k)}\bigg)\bigg\}\bigg(\frac{t-1}{t}\bigg)^p\notag\\
&-\frac{1}{2}\log t\log^2(1-t).
\end{align}
\end{thm}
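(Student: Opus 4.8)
The plan is to read off the identity by matching two independent closed forms for one and the same integral. Corollary 2.7 (equation (\ref{e:45})) and Theorem 2.9 (equation (\ref{e:46})) both evaluate $\int_0^1\frac{1-x^n}{1-x}\log^2(x+a)\,dx$, and the target series $\sum_{k=1}^\infty\frac{H_{k-1}}{k^2(a+1)^k\binom{n+k}{k}}$ occurs in exactly one of them, namely in (\ref{e:45}) with coefficient $-2$, and not at all in (\ref{e:46}). Equating the two right-hand sides therefore produces a linear equation from which that series can be isolated. First I would cancel the three terms common to both sides, namely $H_n\log^2(a+1)$, $-2H_n^{(2)}\log(a+1)$, and $-2\log(a+1)\sum_{k=1}^n\frac{(H_k+H_{n-k}-H_n)(-a)^k}{k}$. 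Solving the remainder for the series (coefficient $-2$) then expresses it in terms of $a$; the double sum $\sum_{p=0}^{n-1}\{\ldots\}(-a)^p$, which is the evaluation of the triple sum furnished by Lemma 1.4 and which enters through (\ref{e:46}), is carried over with coefficient $-1$.

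The second step is the substitution $t=1/(a+1)$, so that $a+1=1/t$, $a=(1-t)/t$, $-a=(t-1)/t$, together with the reductions $\log(a+1)=-\log t$, $\log(1+1/a)=-\log(1-t)$, $\frac{a}{a+1}=1-t$, $\frac{1}{a+1}=t$, and $\log a=\log(1-t)-\log t$. Under these, the factor $(\zeta(2)-Li_2(1/(a+1)))\log\frac{a}{a+1}$ becomes $(\zeta(2)-Li_2(t))\log(1-t)$, the term $-Li_3\frac{a}{a+1}$ becomes $-Li_3(1-t)$, the term $\frac12\log(a+1)\log^2\frac{a}{a+1}$ becomes $-\frac12\log t\log^2(1-t)$, and $-\log(1+1/a)\sum_{k=1}^n\frac{(-a)^kH_k}{k}$ becomes $\log(1-t)\sum_{k=1}^n\frac{H_k}{k}\bigl(\frac{t-1}{t}\bigr)^k$, matching the corresponding terms of (\ref{e:53}) one by one, while $+\zeta(3)$ survives untouched.

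The decisive simplification is the treatment of the two remaining $\sum_{k=1}^n\frac1k(-a)^k$-type terms. The term $-\log(a+1)\log(1+1/a)\sum_{k=1}^n\frac{(-a)^k}{k}$ contributes $-\log t\log(1-t)\sum_{k=1}^n\frac1k\bigl(\frac{t-1}{t}\bigr)^k$, whereas expanding $-\frac12\bigl(\log^2a-\log^2(a+1)\bigr)$ through $\log^2a-\log^2(a+1)=\log^2(1-t)-2\log t\log(1-t)$ splits its contribution into $+\log t\log(1-t)\sum_{k=1}^n\frac1k\bigl(\frac{t-1}{t}\bigr)^k$ and $-\frac12\log^2(1-t)\sum_{k=1}^n\frac1k\bigl(\frac{t-1}{t}\bigr)^k$. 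The two cross terms in $\log t\log(1-t)$ cancel, leaving precisely the term $-\frac12\log^2(1-t)\sum_{k=1}^n\frac1k\bigl(\frac{t-1}{t}\bigr)^k$ of (\ref{e:53}); the double sum becomes $-\sum_{p=0}^{n-1}\{\ldots\}\bigl(\frac{t-1}{t}\bigr)^p$. Assembling the pieces reproduces (\ref{e:53}) verbatim.

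The main obstacle I anticipate is not the algebra, which apart from careful sign bookkeeping through the cross-term cancellation is routine, but the range of validity. Equation (\ref{e:46}) is stated only for $a\ge1$, i.e. $t\in(0,1/2]$, whereas the claim covers $0<t\le1$. To bridge this I would observe that both sides of Theorem 2.9 are real-analytic in $a$ on the whole half-line $a>0$ (the integrand $\log^2(x+a)$ and each logarithm and polylogarithm on the right are analytic there), so the identity, being valid on $a\ge1$, persists for all $a>0$ by the identity theorem; equating with (\ref{e:45}) then yields (\ref{e:53}) for every $t\in(0,1)$. Finally, since the series $\sum_{k=1}^\infty\frac{H_{k-1}t^k}{k^2\binom{n+k}{k}}$ converges at $t=1$ and the right-hand side is continuous there, the endpoint $t=1$ follows from Abel's limit theorem, exactly as in the proofs of Lemma 1.6 and Lemma 1.8.
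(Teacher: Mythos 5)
Your proof is correct and follows essentially the paper's own route: the paper obtains Theorem 2.11 in one line by equating the right-hand sides of (\ref{e:45}) and (\ref{e:46}) and setting $t=1/(a+1)$, which is exactly your computation, including the cancellation of the cross terms in $\log t\log(1-t)$. The one point where you go beyond the paper is the range of validity: the paper asserts $0<t\le 1$ even though the overlap of (\ref{e:45}) (needing $a\ge 0$) and (\ref{e:46}) (stated for $a\in\mathbb{R}\backslash[-1,1)$, hence $a\ge 1$ for the relevant positive $a$) only covers $t\in(0,1/2]$ directly, and your analyticity-plus-Abel argument supplies the missing extension that the paper leaves implicit.
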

\begin{thm}For $n\in\mathbb{N}$ and $0 <t\leq1$ the following identity holds.
\begin{align}\label{e:54}
&\sum_{k=1}^{\infty}\frac{H_{k-1}t^k}{k\binom{n+k}{k}}=\frac{1}{2}\log^2(1-t)+\frac{t\log(1-t)}{1-t}\sum_{k=1}^{n}\frac{1}{k}\left(\frac{t-1}{t}\right)^k\notag\\
&+\frac{\log^2(1-t)}{2(1-t)}\sum_{k=1}^{n}\left(\frac{t-1}{t}\right)^k-\frac{t}{1-t}\sum_{k=1}^{n}\frac{H_k}{k}\left(\frac{t-1}{t}\right)^k\notag\\
&+\frac{1}{1-t}\sum_{p=0}^{n-1}\bigg\{\sum_{k=1}^{n-p}\bigg(\frac{H_{p+k}-H_k}{(p+k)k}+\frac{1}{k^2(p+k)}\bigg)\bigg\}p\bigg(\frac{t-1}{t}\bigg)^p\notag\\
&-\frac{\log(1-t)}{1-t}\sum_{k=1}^{n}H_k\left(\frac{t-1}{t}\right)^k.
\end{align}
\end{thm}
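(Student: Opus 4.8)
The plan is to obtain (\ref{e:54}) from the already established identity (\ref{e:53}) by differentiation. The key observation is that the series on the left of (\ref{e:54}) is the image of the series on the left of (\ref{e:53}) under the operator $t\,\frac{d}{dt}$. Writing $F(t)=\sum_{k=1}^{\infty}\frac{H_{k-1}t^k}{k^2\binom{n+k}{k}}$, the estimate $\binom{n+k}{k}\sim k^n/n!$ shows that this power series has radius of convergence $1$, so on $(0,1)$ it may be differentiated term by term to give
$$
t\,F'(t)=\sum_{k=1}^{\infty}\frac{H_{k-1}t^{k}}{k\binom{n+k}{k}},
$$
which is exactly the left-hand side of (\ref{e:54}). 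Hence I would apply $t\,\frac{d}{dt}$ to the closed form on the right of (\ref{e:53}), which is equal to $F(t)$ and differentiable on $(0,1)$, and simplify.

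Concretely, I would differentiate the right-hand side of (\ref{e:53}) term by term using $\frac{d}{dx}Li_3(x)=Li_2(x)/x$, $\frac{d}{dx}Li_2(x)=-\log(1-x)/x$ and $\frac{d}{dt}\log(1-t)=-\frac{1}{1-t}$. For the finite sums it is convenient to set $u=\frac{t-1}{t}$, so that $\frac{du}{dt}=\frac{1}{t^2}$ and, crucially, $\frac{1}{t}\,u^{k-1}=-\frac{u^{k}}{1-t}$. This last algebraic identity is what converts the powers $u^{k-1}$ produced by the differentiation back into the powers $u^{k}$ (and, in the triple-sum term, $u^{p-1}$ into $u^{p}$, the vanishing $p=0$ contribution being harmless) that appear on the right of (\ref{e:54}). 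Carrying this out matches, one by one, the two finite logarithmic sums, the sum $\sum H_k u^{k}/k$, and the triple sum of (\ref{e:54}) after multiplication by $t$.

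The decisive simplification concerns the transcendental part. Differentiating $-Li_3(1-t)$, $(\zeta(2)-Li_2(t))\log(1-t)$ and $-\tfrac12\log t\,\log^2(1-t)$ and multiplying by $t$ leaves, after all finite-sum pieces are accounted for, the three leftover terms
$$
\frac{t}{1-t}\Big[Li_2(1-t)+Li_2(t)-\zeta(2)+\log t\,\log(1-t)\Big],
$$
whose bracket vanishes identically by the dilogarithm reflection formula (\ref{e:10}) taken at $x=t$. At the same time the two genuine $\log^2(1-t)$ contributions that survive, namely $\log^2(1-t)$ coming from $-Li_2(t)$ through the term $\tfrac{\log^2(1-t)}{t}$ and $-\tfrac12\log^2(1-t)$ coming from differentiating $\log t$ in $-\tfrac12\log t\,\log^2(1-t)$, combine to the single term $\tfrac12\log^2(1-t)$ recorded in (\ref{e:54}). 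I expect this bookkeeping, keeping track of exactly which $\log^2(1-t)$, $\log(1-t)$ and dilogarithm pieces are generated and recognizing the reflection-formula cancellation, to be the only genuine obstacle; the rest is routine differentiation.

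Finally, the identity so obtained holds a priori only on the open interval $0<t<1$. Since $\binom{n+k}{k}$ grows polynomially in $k$, the series on the left of (\ref{e:54}) converges at $t=1$ and is therefore continuous there by Abel's limit theorem; as the identity holds throughout $(0,1)$, the right-hand side must share the same one-sided limit, and the identity extends to $t=1$. This is precisely the Abel-continuity argument already invoked for Lemma 1.6 and Lemma 1.8, so it causes no additional difficulty, and the proof is complete.
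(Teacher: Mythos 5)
Your proposal is correct and is essentially the paper's own proof: the paper likewise justifies term-by-term differentiation of (\ref{e:53}) on $(0,1)$ via the radius of convergence, applies $t\,\frac{d}{dt}$ to both sides, and extends the resulting identity to $t=1$ by Abel's limit theorem. The only difference is that you spell out the bookkeeping of the polylogarithm derivatives and the reflection-formula cancellation, which the paper leaves implicit.
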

\begin{proof}
The radius of convergence of the power series $\sum_{k=0}^{\infty}\frac{H_{k-1}t^k}{k^2\binom{n+k}{k}}$ is 1. Let $t\in(0,1)$. Then this power series is uniformly convergent on $[0,t]$. Therefore, it can be differentiated term-by-term at $t$ (see \cite[Theorem 7.4.4(5)]{Ter}). Differentiating both sides of (\ref{e:53}) with respect to $t$, and then multiplying the result by $t$, we get (\ref{e:54}). But since this series is also convergent at $t=1$, (\ref{e:54}) is valid for $t=1$ by Abel's limit theorem. The proof is completed.
\end{proof}
\begin{thm} For $0<t<1$ if $n=1$ and $0<t\leq1$ if $n\geq2$ the following identity holds.
\begin{align}\label{e:55}
&\sum_{k=1}^{\infty}\frac{H_{k-1}t^k}{\binom{n+k}{k}}=f_0(t)+f_1(t)\sum_{k=1}^{n}\frac{1}{k}\left(\frac{t-1}{t}\right)^k-f_2(t)\sum_{k=1}^{n}\left(\frac{t-1}{t}\right)^k\notag\\
&-f_3(t)\sum_{k=1}^{n}k\left(\frac{t-1}{t}\right)^k+f_4(t)\sum_{k=1}^{n}H_k\left(\frac{t-1}{t}\right)^k\notag\\
&+f_5(t)\sum_{k=1}^{n}kH_k\left(\frac{t-1}{t}\right)^k-\frac{t}{(1-t)^2}\sum_{k=1}^{n}\frac{H_k}{k}\left(\frac{t-1}{t}\right)^k\notag\\
&+\frac{t}{(1-t)^2}\sum_{p=0}^{n-1}\bigg\{\sum_{k=1}^{n-p}\bigg(\frac{H_{p+k}-H_k}{(p+k)k}+\frac{1}{k^2(p+k)}\bigg)\bigg\}p\bigg(\frac{t-1}{t}\bigg)^p\notag\\
&-\frac{1}{(1-t)^2}\sum_{p=0}^{n-1}\bigg\{\sum_{k=1}^{n-p}\bigg(\frac{H_{p+k}-H_k}{(p+k)k}+\frac{1}{k^2(p+k)}\bigg)\bigg\}p^2\bigg(\frac{t-1}{t}\bigg)^p.
\end{align}
Here
\begin{align*}
&f_0(t)=-\frac{t\log(1-t)}{1-t},\quad f_1(t)=\frac{t\log(1-t)-t^2}{(1-t)^2}\\
&f_2(t)=\frac{4t\log(1-t)-t\log^2(1-t)}{2(1-t)^2},\quad f_3(t)=\frac{\log^2(1-t)}{2(1-t)^2},\\
&f_4(t)=\frac{2t-t\log(1-t)}{(1-t)^2},\quad f_5(t)=\frac{\log(1-t)}{(1-t)^2}.
\end{align*}
\begin{proof}
Differentiating both sides of (\ref{e:54}) with respect to $t$, which can be justified in exactly the same way with the proof of Theorem 2.12, and then multiplying the resulting identity by $t$, we complete the proof.
\end{proof}
\end{thm}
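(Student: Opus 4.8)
The plan is to obtain (\ref{e:55}) from the already-established $p=1$ formula (\ref{e:54}) by applying the same operator $t\,\frac{d}{dt}$ that took us from $p=2$ to $p=1$. Since $t\,\frac{d}{dt}t^{k}=k\,t^{k}$, this operator sends $\sum_{k=1}^{\infty}\frac{H_{k-1}t^{k}}{k\binom{n+k}{k}}$ to exactly $\sum_{k=1}^{\infty}\frac{H_{k-1}t^{k}}{\binom{n+k}{k}}$, which is the left-hand side of (\ref{e:55}). Thus the entire content is to differentiate the right-hand side of (\ref{e:54}) and simplify. First I would justify the term-by-term differentiation exactly as in the proof of Theorem 2.12: the series $\sum_{k=1}^{\infty}\frac{H_{k-1}t^{k}}{k\binom{n+k}{k}}$ has radius of convergence $1$, hence converges uniformly on every $[0,t_{0}]\subset[0,1)$ and may be differentiated termwise there. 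This legitimizes differentiating (\ref{e:54}) for $0<t<1$; multiplying through by $t$ then yields the stated identity on $(0,1)$.

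The computational heart is differentiating the finite sums in (\ref{e:54}), all of which are built from the quantity $u:=\frac{t-1}{t}=1-\frac1t$. The key elementary fact is
\begin{equation*}
t\,\frac{d}{dt}\,u^{k}=t\cdot k\,u^{k-1}\cdot\frac{1}{t^{2}}=\frac{k}{t}\,u^{k-1}=-\frac{k}{1-t}\,u^{k},
\end{equation*}
the last equality because $u^{k-1}=u^{k}\cdot\frac{t}{t-1}$. Consequently, for any coefficients $c_{k}$ and any smooth prefactor $g$,
\begin{equation*}
t\,\frac{d}{dt}\Big(g(t)\sum_{k}c_{k}u^{k}\Big)=t\,g'(t)\sum_{k}c_{k}u^{k}-\frac{g(t)}{1-t}\sum_{k}c_{k}\,k\,u^{k}.
\end{equation*}
This is precisely the mechanism that produces, from the single sums $\sum_{k}\frac1k u^{k}$, $\sum_{k}\frac{H_{k}}{k}u^{k}$ and $\sum_{k}H_{k}u^{k}$ in (\ref{e:54}), the richer family $\sum_{k}u^{k}$, $\sum_{k}k\,u^{k}$, $\sum_{k}H_{k}u^{k}$, $\sum_{k}kH_{k}u^{k}$ appearing in (\ref{e:55}), and that turns the triple-sum term carrying a factor $p$ into one carrying $p^{2}$. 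The prefactors $f_{0},\dots,f_{5}$ are then exactly the combinations of $\log(1-t)$, $\log^{2}(1-t)$, $t$ and $(1-t)^{-1},(1-t)^{-2}$ obtained by applying $t\frac{d}{dt}$, via the product rule, to the coefficient functions $\tfrac12\log^{2}(1-t)$, $\tfrac{t\log(1-t)}{1-t}$, $\tfrac{\log^{2}(1-t)}{2(1-t)}$, $\tfrac{t}{1-t}$, $\tfrac{1}{1-t}$ and $\tfrac{\log(1-t)}{1-t}$ of (\ref{e:54}); I would simply compute each derivative and collect like sums.

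The only genuinely delicate point is the endpoint $t=1$, which explains the asymmetric hypothesis. For $n\ge2$ the series $\sum_{k=1}^{\infty}\frac{H_{k-1}}{\binom{n+k}{k}}$ converges, since $\binom{n+k}{k}\sim k^{n}/n!$ and $H_{k-1}\sim\log k$ make the general term of order $k^{-n}\log k$; Abel's limit theorem then extends the identity from $(0,1)$ to $t=1$. For $n=1$ the general term is of order $k^{-1}\log k$, the series diverges, and $t=1$ must be excluded --- hence the split $0<t<1$ for $n=1$ versus $0<t\le1$ for $n\ge2$. I expect the bulk of the effort, and the main source of error, to be the bookkeeping in gathering the differentiated pieces into the stated $f_{i}(t)$ and matching the finite-sum structure of (\ref{e:55}); the analytic steps (termwise differentiation and the Abel continuation) are routine given the proof of Theorem 2.12.
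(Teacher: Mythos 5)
Your proposal is correct and follows exactly the paper's own route: apply $t\,\frac{d}{dt}$ to the $p=1$ identity (\ref{e:54}), justify termwise differentiation by uniform convergence on $[0,t_0]\subset[0,1)$ as in the proof of Theorem 2.12, and recover the endpoint $t=1$ (for $n\ge 2$) via Abel's limit theorem. Your added explanation of the asymmetric hypothesis --- divergence of $\sum_{k\ge1}H_{k-1}/\binom{n+k}{k}$ when $n=1$ since the general term is of order $k^{-1}\log k$ --- is a detail the paper leaves implicit, but it does not change the method.
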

Equating the right-hand sides of (\ref{e:34}) and (\ref{e:52}) and setting $t=1/(a-1)$ we get:
\begin{thm} For $n\in\mathbb{N}$ and $0 <t\leq 1$ we have
\begin{align}\label{e:56}
&\sum_{k=1}^{\infty}\frac{(-t)^kH_{k-1}}{k^2\binom{n+k}{k}}=\frac{1}{2}\log t\log^2(t+1)-\frac{1}{3}\log^3(1+t)\notag\\
&-\log(1+t)Li_2\bigg(\frac{1}{1+t}\bigg)-Li_3\bigg(\frac{1}{1+t}\bigg)+\zeta(3)\notag\\
&+\log(1+t)\sum_{k=1}^{n}\frac{H_k}{k}\bigg(\frac{t+1}{t}\bigg)^k-\frac{1}{2}\log^2(1+t)\sum_{k=1}^{n}\frac{1}{k}\bigg(\frac{t+1}{t}\bigg)^k\notag\\
&-\sum_{p=0}^{n-1}\bigg\{\sum_{k=1}^{n-p}\bigg(\frac{H_{p+k}-H_k}{(p+k)k}+\frac{1}{k^2(p+k)}\bigg)\bigg\}\bigg(\frac{t+1}{t}\bigg)^p.
\end{align}
\end{thm}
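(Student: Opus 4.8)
The plan is to exploit the fact that the single integral $\int_{0}^{1}\frac{1-x^n}{1-x}\log^2(a-x)\,dx$ has already been evaluated twice, in (\ref{e:34}) (Theorem 2.5) and in (\ref{e:52}) (Corollary 2.11), both valid for $a\geq 2$ and $n\in\mathbb{N}$. Equating the two right-hand sides produces an identity in which the only infinite series present is $\sum_{k=1}^{\infty}\frac{2(-1)^kH_{k-1}}{k^2(a-1)^k\binom{n+k}{k}}$; isolating that series and rewriting everything in the variable $t=1/(a-1)$ will give (\ref{e:56}). First I would subtract (\ref{e:52}) from (\ref{e:34}). Three blocks are literally common to both expressions, namely $H_n\log^2(a-1)$, the term $-2H_n^{(2)}\log(a-1)$, and $-2\log(a-1)\sum_{k=1}^n\frac{(H_k+H_{n-k}-H_n)a^k}{k}$, so these cancel at once, and the equation can be solved for the series.

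Next I would set $t=1/(a-1)$, so that $a-1=1/t$ and $a=(t+1)/t$; as $a$ ranges over $[2,\infty)$ this is exactly $t\in(0,1]$, which matches the claimed range, and the endpoint $t=1$ (i.e. $a=2$) is covered directly because both (\ref{e:34}) and (\ref{e:52}) are asserted at $a=2$. Under this substitution the translation dictionary is $\frac{a}{a-1}=t+1$, $\frac{a-1}{a}=\frac{1}{t+1}$, $\log(a-1)=-\log t$, $\log(1-1/a)=-\log(t+1)$, and $\frac{(-1)^k}{(a-1)^k}=(-t)^k$, so the isolated series becomes $2\sum_{k=1}^{\infty}\frac{(-t)^kH_{k-1}}{k^2\binom{n+k}{k}}$, twice the left side of (\ref{e:56}). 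The polylogarithms transform cleanly as $Li_3\!\left(\frac{a-1}{a}\right)=Li_3\!\left(\frac{1}{t+1}\right)$ and $Li_2\!\left(\frac{a-1}{a}\right)=Li_2\!\left(\frac{1}{t+1}\right)$, while the finite sums $\sum_{k=1}^n\frac{a^k}{k}$, $\sum_{k=1}^n\frac{H_ka^k}{k}$ and the Lemma 1.4 block $\sum_{p=0}^{n-1}\{\cdots\}a^p$ become the corresponding $\left(\frac{t+1}{t}\right)$-sums on the right of (\ref{e:56}). In particular, moving $+2\log(1-1/a)\sum_{k=1}^n\frac{H_ka^k}{k}$ to the other side and dividing by $2$ turns it into $+\log(t+1)\sum_{k=1}^n\frac{H_k}{k}\left(\frac{t+1}{t}\right)^k$.

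The main (and essentially only) obstacle is the bookkeeping for the coefficient of the finite sum $\sum_{k=1}^n\frac1k\left(\frac{t+1}{t}\right)^k$, which receives contributions from two distinct places. The cross-term $2\log(a-1)\log\!\left(\frac{a}{a-1}\right)\sum_{k=1}^n\frac{a^k}{k}$ coming from (\ref{e:34}) contributes $-2\log t\log(t+1)$, whereas the term $(\log^2 a-\log^2(a-1))\sum_{k=1}^n\frac{a^k}{k}$ coming from (\ref{e:52}) must first be expanded using $\log a=\log(t+1)-\log t$, giving $\log^2 a-\log^2(a-1)=\log^2(t+1)-2\log t\log(t+1)$. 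The two $\mp 2\log t\log(t+1)$ pieces cancel, leaving a clean $-\log^2(t+1)$, and after dividing the whole identity by $2$ this yields the coefficient $-\frac12\log^2(t+1)$ recorded in (\ref{e:56}). Tracking these logarithmic prefactors correctly is the only delicate point; once it is done, every remaining term (the $\zeta(3)$, the $\frac12\log t\log^2(t+1)-\frac13\log^3(t+1)$ block, and the two polylogarithms) is a direct term-by-term translation, completing the proof.
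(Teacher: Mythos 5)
Your proposal is correct and is precisely the paper's own route: the paper derives (\ref{e:56}) by equating the right-hand sides of (\ref{e:34}) and (\ref{e:52}) and substituting $t=1/(a-1)$, exactly as you describe. Your bookkeeping of the logarithmic prefactors, in particular the cancellation of the $\mp 2\log t\log(t+1)$ pieces leaving the coefficient $-\tfrac{1}{2}\log^2(1+t)$, correctly fills in the computation the paper leaves implicit.
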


\begin{thm}For $n\in\mathbb{N}$ and $0 <t\leq 1$ we have
\begin{align}\label{e:57}
&\sum_{k=1}^{\infty}\frac{(-t)^kH_{k-1}}{k\binom{n+k}{k}}=\frac{1}{2} \log ^2(t+1)+\frac{t}{t+1}
\sum_{k=1}^n\frac{H_k}{k}\left(\frac{t+1}{t}\right)^k\notag\\
&-\frac{t\log (t+1)}{t+1} \sum _{k=1}^n \frac{1}{k}\left(\frac{t+1}{t}\right)^k+\frac{\log^2(t+1)}{2(t+1)} \sum _{k=1}^n \left(\frac{t+1}{t}\right)^k\notag\\
&+\frac{1}{t+1}\sum_{p=0}^{n-1}\bigg\{\sum_{k=1}^{n-p}\bigg(\frac{H_{p+k}-H_k}{(p+k)k}+\frac{1}{k^2(p+k)}\bigg)\bigg\}p\bigg(\frac{t+1}{t}\bigg)^p\notag\\
&-\frac{\log (t+1)}{t+1}\sum _{k=1}^n H_k \left(\frac{t+1}{t}\right)^k.
\end{align}
\end{thm}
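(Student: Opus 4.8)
The plan is to obtain (\ref{e:57}) from (\ref{e:56}) by exactly the term-by-term differentiation device already used to pass from (\ref{e:53}) to (\ref{e:54}) in the proof of Theorem 2.12. The power series $\sum_{k=1}^{\infty}\frac{(-t)^k H_{k-1}}{k^2\binom{n+k}{k}}$ has radius of convergence $1$ and converges uniformly on every compact subinterval of $(-1,1)$, so it may be differentiated term by term on $(0,1)$ (see \cite[Theorem 7.4.4(5)]{Ter}). Writing $S(t)$ for this series, the elementary fact $\frac{d}{dt}(-t)^k=-k(-t)^{k-1}$ gives $t\,S'(t)=\sum_{k=1}^{\infty}\frac{(-t)^k H_{k-1}}{k\binom{n+k}{k}}$, which is precisely the left-hand side of (\ref{e:57}). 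Thus it suffices to differentiate the right-hand side of (\ref{e:56}) and multiply the result by $t$, and to extend to $t=1$ by Abel's limit theorem since the left-hand series also converges there.

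First I would differentiate the purely transcendental block $\frac12\log t\log^2(t+1)-\frac13\log^3(1+t)-\log(1+t)Li_2\!\big(\tfrac{1}{1+t}\big)-Li_3\!\big(\tfrac{1}{1+t}\big)+\zeta(3)$. Using $\frac{d}{dt}Li_2\!\big(\tfrac{1}{1+t}\big)=\frac{\log t-\log(1+t)}{1+t}$ and $\frac{d}{dt}Li_3\!\big(\tfrac{1}{1+t}\big)=-\frac{1}{1+t}Li_2\!\big(\tfrac{1}{1+t}\big)$, the dilogarithm contributions, the $\log t\log(1+t)$ contributions, and the $\log^2(1+t)$ contributions all cancel in pairs, leaving only $\frac{\log^2(t+1)}{2t}$. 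Multiplying by $t$ yields the leading term $\frac12\log^2(t+1)$ of (\ref{e:57}); this collapse is the cleanest part of the argument and is what makes the final formula so short.

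Next I would differentiate the two finite sums carried by $\log(1+t)$ and $\log^2(1+t)$. Here the product rule produces, besides the expected logarithmic prefactors, terms in which $\big(\tfrac{t+1}{t}\big)^k$ is itself differentiated; since $\frac{d}{dt}\big(\tfrac{t+1}{t}\big)^k=-\frac{k}{t^2}\big(\tfrac{t+1}{t}\big)^{k-1}$ together with the reindexing identity $\frac{1}{t}\big(\tfrac{t+1}{t}\big)^{k-1}=\frac{1}{t+1}\big(\tfrac{t+1}{t}\big)^{k}$, multiplication by $t$ converts every such term back into a sum of the same shape $\big(\tfrac{t+1}{t}\big)^{k}$ carrying the factor $\frac{1}{t+1}$. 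Tracking this bookkeeping reproduces the four finite sums $\frac{t}{t+1}\sum\frac{H_k}{k}(\cdot)^k$, $-\frac{t\log(t+1)}{t+1}\sum\frac1k(\cdot)^k$, $\frac{\log^2(t+1)}{2(t+1)}\sum(\cdot)^k$, and $-\frac{\log(t+1)}{t+1}\sum H_k(\cdot)^k$ appearing in (\ref{e:57}).

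Finally I would differentiate the nested double sum $-\sum_{p=0}^{n-1}\{\cdots\}\big(\tfrac{t+1}{t}\big)^p$; only the factor $\big(\tfrac{t+1}{t}\big)^p$ depends on $t$, so the same two identities introduce a factor $p$ and the prefactor $\frac{1}{t+1}$, producing exactly the term $\frac{1}{t+1}\sum_{p=0}^{n-1}\{\cdots\}\,p\,\big(\tfrac{t+1}{t}\big)^p$ of (\ref{e:57}) (note that, unlike the passage to (\ref{e:55}), no $p^2$ term arises because the nested sum in (\ref{e:56}) carries no prior factor of $p$). Collecting all contributions establishes (\ref{e:57}) on $(0,1)$, and Abel's limit theorem then yields the case $t=1$. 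I expect the main obstacle to be purely organizational: keeping the many product-rule contributions and the $\big(\tfrac{t+1}{t}\big)^{k-1}\!\to\!\big(\tfrac{t+1}{t}\big)^{k}$ reindexing straight so that the finite-sum terms assemble without sign or index error, whereas the transcendental part essentially collapses by itself.
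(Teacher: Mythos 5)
Your proposal is correct and follows exactly the paper's own route: the paper proves Theorem 2.15 by differentiating (\ref{e:56}) term by term (justified as in Theorem 2.12), multiplying by $t$, and invoking Abel's limit theorem at $t=1$. Your explicit verification of the derivative identities for $Li_2\bigl(\tfrac{1}{1+t}\bigr)$ and $Li_3\bigl(\tfrac{1}{1+t}\bigr)$, the collapse of the transcendental block to $\tfrac{1}{2t}\log^2(t+1)$, and the reindexing $\tfrac{1}{t}\bigl(\tfrac{t+1}{t}\bigr)^{k-1}=\tfrac{1}{t+1}\bigl(\tfrac{t+1}{t}\bigr)^{k}$ are all accurate and simply spell out the computation the paper leaves implicit.
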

\begin{proof}
Differentiating both sides of (\ref{e:56}) with respect to $t$, which can be justified in exactly the same way with the proof of Theorem 2.12, and then multiplying by $t$, we complete the proof.
\end{proof}
\begin{thm}For $n\in\mathbb{N}$ and $0 <t\leq 1$ we have
\begin{align}\label{e:58}
&\sum_{k=1}^{\infty}\frac{(-t)^kH_{k-1}}{\binom{n+k}{k}}=\frac{t\log (t+1)}{t+1}+\frac{t}{(t+1)^2}\sum _{k=1}^n \frac{H_k}{k} \left(\frac{t+1}{t}\right)^k\notag\\
&-\frac{t^2+t\log (t+1)}{(t+1)^2}\sum _{k=1}^n \frac{1}{k}\left(\frac{t+1}{t}\right)^k-\frac{\log ^2(t+1)}{2(t+1)^2}\sum _{k=1}^n k \left(\frac{t+1}{t}\right)^k\notag\\
&-\frac{t}{(t+1)^2}\sum _{p=0}^{n-1}\bigg\{\sum _{k=1}^{n-p} \left(\frac{H_{k+p}-H_k}{k (k+p)}+\frac{1}{k^2 (k+p)}\right)\bigg\}p\left(\frac{t+1}{t}\right)^p\notag\\
&-\frac{1}{(t+1)^2}\sum _{p=0}^{n-1}\bigg\{\sum _{k=1}^{n-p} \left(\frac{H_{k+p}-H_k}{k (k+p)}+\frac{1}{k^2 (k+p)}\right)\bigg\}p^2\left(\frac{t+1}{t}\right)^p\notag\\
&-\frac{2t-t\log (t+1)}{(t+1)^2}\sum _{k=1}^n H_k \left(\frac{t+1}{t}\right)^k+\frac{\log (t+1)}{(t+1)^2}\sum _{k=1}^n k H_k \left(\frac{t+1}{t}\right)^k\notag\\
&+\frac{4 t\log (t+1)-t\log ^2(t+1)}{2 (t+1)^2}\sum _{k=1}^n \left(\frac{t+1}{t}\right)^k.
\end{align}
\end{thm}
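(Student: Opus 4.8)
The plan is to derive (\ref{e:58}) from (\ref{e:57}) by the same differentiate-and-multiply device that produced Theorems 2.12, 2.13 and 2.15 from their predecessors. The starting point is the observation that differentiating the left-hand side of (\ref{e:57}) term-by-term and then multiplying by $t$ reproduces exactly the sum we want:
\begin{equation*}
t\,\frac{d}{dt}\sum_{k=1}^{\infty}\frac{(-t)^kH_{k-1}}{k\binom{n+k}{k}}
=t\sum_{k=1}^{\infty}\frac{(-1)^kt^{k-1}H_{k-1}}{\binom{n+k}{k}}
=\sum_{k=1}^{\infty}\frac{(-t)^kH_{k-1}}{\binom{n+k}{k}}.
\end{equation*}
Hence it suffices to differentiate the right-hand side of (\ref{e:57}) with respect to $t$, multiply by $t$, and collect terms, once the term-by-term differentiation has been justified.

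For that justification I would argue exactly as in the proof of Theorem 2.12: the power series $\sum_{k=1}^{\infty}\frac{(-t)^kH_{k-1}}{k\binom{n+k}{k}}$ has radius of convergence $1$, so for each fixed $t\in(0,1)$ it converges uniformly on $[0,t]$ and may be differentiated term-by-term there (see \cite[Theorem 7.4.4(5)]{Ter}). This establishes (\ref{e:58}) on $(0,1)$, and since for $n\in\mathbb{N}$ the series $\sum_{k=1}^{\infty}\frac{(-t)^kH_{k-1}}{\binom{n+k}{k}}$ remains convergent at $t=1$ (its terms alternate in sign with absolute values $H_{k-1}/\binom{n+k}{k}$ tending monotonically to $0$), the case $t=1$ follows by Abel's limit theorem.

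The real content is the differentiation of the right-hand side of (\ref{e:57}), and I expect this to be the only obstacle, a matter of careful bookkeeping rather than any new idea. The governing computation is
\begin{equation*}
\frac{d}{dt}\left(\frac{t+1}{t}\right)^{k}=-\frac{k}{t(t+1)}\left(\frac{t+1}{t}\right)^{k},
\end{equation*}
which shows that differentiating any finite sum $\sum_{k}c_k\left(\frac{t+1}{t}\right)^{k}$ from (\ref{e:57}) produces a new finite sum whose summand carries an extra factor $k$, with the scalar $-\frac{1}{t(t+1)}$ pulled outside. This is precisely the mechanism responsible for the $\sum_{k=1}^{n}k(\cdots)$ and $\sum_{k=1}^{n}kH_k(\cdots)$ terms, and for the $p^{2}$-weighted inner double sum, that appear in (\ref{e:58}) but not in (\ref{e:57}). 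The remaining terms come from the product and quotient rules applied to the coefficient functions $\frac{t}{t+1}$, $\frac{t\log(t+1)}{t+1}$, $\frac{\log^2(t+1)}{2(t+1)}$, and the like; each such differentiation produces a further $\frac{1}{t+1}$ or $\frac{\log(t+1)}{t+1}$ factor, and after multiplying the entire expression by $t$ these regroup into the coefficient functions with denominators $(t+1)^2$ displayed in (\ref{e:58}). Finally I would gather the resulting terms according to which finite sum they multiply, taking care to keep the $p$- and $p^{2}$-weighted double sums distinct and to track the signs introduced by $-\frac{1}{t(t+1)}$; simplifying the elementary prefactors then yields (\ref{e:58}).
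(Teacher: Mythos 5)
Your proposal is correct and coincides with the paper's own proof: the paper likewise obtains (\ref{e:58}) by differentiating both sides of (\ref{e:57}) with respect to $t$, justifying the term-by-term differentiation exactly as in the proof of Theorem 2.12 (uniform convergence on $[0,t]$ for $t<1$, then Abel's limit theorem at $t=1$), and multiplying the result by $t$. Your additional bookkeeping remarks (the derivative of $\left(\frac{t+1}{t}\right)^k$ producing the $k$- and $p^2$-weighted sums) only spell out the computation the paper leaves implicit.
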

\begin{proof}
Differentiating both sides of (\ref{e:57}) with respect to $t$, which can be justified in exactly the same way with the proof of Theorem 2.12, and then multiplying the resulting identity by $t$, we complete the proof.
\end{proof}
Setting $t=1/(a+1)$ in (\ref{e:29}), we get:
\begin{thm} For $n\in\mathbb{N}$ and $|t|\leq 1$ we have
\begin{align}\label{e:59}
\sum_{k=1}^{\infty}\frac{t^k}{k^2\binom{n+k}{k}}&=Li_2(t)-H_n^{(2)}+\log(1-t)\sum_{k=1}^{n}\frac{1}{k}\left(\frac{t-1}{t}\right)^k\notag\\
&-\sum_{k=1}^{n}\frac{H_k+H_{n-k}-H_n}{k}\left(\frac{t-1}{t}\right)^k.
\end{align}
\end{thm}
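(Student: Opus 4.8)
The plan is to derive (\ref{e:59}) directly from the identity (\ref{e:29}) of Corollary~2.4 by the single change of variable $t=1/(a+1)$, so that no new integral evaluation or series rearrangement is needed; the entire argument is an algebraic substitution combined with careful tracking of the domain and of the logarithm's branch. First I would record the elementary consequences of $t=1/(a+1)$: we have $a+1=1/t$, hence $a=(1-t)/t$, $-a=(t-1)/t$, and $1/(a+1)^k=t^k$. In particular the left-hand side of (\ref{e:29}), namely $\sum_{k=1}^{\infty}1/(k^2(a+1)^k\binom{n+k}{k})$, turns verbatim into $\sum_{k=1}^{\infty}t^k/(k^2\binom{n+k}{k})$, which is exactly the left-hand side of (\ref{e:59}).

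Next I would transform the four terms on the right of (\ref{e:29}) one at a time. The dilogarithm becomes $Li_2(1/(a+1))=Li_2(t)$; the constant term $-H_n^{(2)}$ is unchanged; and in both finite sums the factor $(-a)^k$ becomes $((t-1)/t)^k$, so the last sum of (\ref{e:29}) reproduces the last sum of (\ref{e:59}). The only term demanding a moment of care is the logarithmic one: from $1+1/a=(a+1)/a=1/(1-t)$ one obtains $\log(1+1/a)=-\log(1-t)$, so that $-\log(1+1/a)\sum_{k=1}^{n}(-a)^k/k$ becomes $+\log(1-t)\sum_{k=1}^{n}\frac1k\bigl(\frac{t-1}{t}\bigr)^k$, which is precisely the third term of (\ref{e:59}). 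Assembling these pieces yields (\ref{e:59}) termwise.

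Since the substitution is exact, there is no deep analytic obstacle; the only genuine point of care, and hence the ``hard part,'' is the bookkeeping of the domain and the correctness of the branch identity $\log(1+1/a)=-\log(1-t)$ on the whole range. As $a$ runs through $\mathbb{R}\setminus(-2,0)$, the map $a\mapsto 1/(a+1)$ sends $[0,\infty)$ onto $(0,1]$ and $(-\infty,-2]$ onto $[-1,0)$, so $t$ sweeps out $[-1,1]\setminus\{0\}$; thus (\ref{e:59}) is immediate for real $t$ with $0<|t|\le 1$. I would then check that both logarithms are real with the correct sign on each half of the domain: for $a\ge 0$ one has $1-t\in[0,1)$, while for $a\le -2$ one has $1+1/a\in[\tfrac12,1)$ and $1-t\in(1,2]$, so the identity holds as written in both cases. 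Finally, the endpoint $t=1$ (equivalently $a=0$) is handled exactly as in (\ref{e:29}): there $\bigl(\frac{t-1}{t}\bigr)^k$ vanishes for every $k\ge 1$ and $(1-t)\log(1-t)\to 0$, so the potentially singular product tends to $0$ and the identity persists by Abel's limit theorem, matching the convention already in force in Corollary~2.4. This completes the plan.
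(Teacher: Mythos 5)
Your proposal is correct and is precisely the paper's own argument: the theorem is obtained there by the single substitution $t=1/(a+1)$ in Corollary 2.4, identity (\ref{e:29}), exactly as you do. Your additional bookkeeping — the branch identity $\log(1+1/a)=-\log(1-t)$ on both components of $\mathbb{R}\setminus(-2,0)$, the image of the domain being $0<|t|\le 1$, and the degenerate product $\log(1-t)\sum_{k=1}^{n}\frac{1}{k}\bigl(\frac{t-1}{t}\bigr)^k\to 0$ at $t=1$ — is care the paper leaves implicit, not a deviation from its route.
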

\begin{thm} For $n\in\mathbb{N}$ and $|t| \leq 1$, we have
\begin{align}\label{e:60}
\sum_{k=1}^{\infty}\frac{t^k}{k\binom{n+k}{k}}&=\frac{1}{1-t} \sum _{k=1}^n \left(H_k+H_{n-k}-H_n\right)\bigg(\frac{t-1}{t}\bigg)^{k}\notag\\
&+\sum _{k=1}^n \frac{1}{k}\bigg(\frac{t-1}{t}\bigg)^{k-1}-\bigg(\frac{t-1}{t}\bigg)^n \log (1-t).
\end{align}
\end{thm}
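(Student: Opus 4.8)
The plan is to obtain (\ref{e:60}) from (\ref{e:59}) by the device already used in Theorem 2.12: since $t\frac{d}{dt}\frac{t^k}{k^2}=\frac{t^k}{k}$, the series in (\ref{e:60}) is exactly $t$ times the derivative of the series in (\ref{e:59}). The power series $\sum_{k=1}^{\infty}\frac{t^k}{k^2\binom{n+k}{k}}$ has radius of convergence $1$ and converges uniformly on compact subsets of $(-1,1)$, so it may be differentiated term by term there (see \cite[Theorem 7.4.4(5)]{Ter}). Thus I would differentiate both sides of (\ref{e:59}) with respect to $t$, multiply the result by $t$, and check that the right-hand side collapses to the claimed expression; since the series $\sum_{k=1}^{\infty}\frac{t^k}{k\binom{n+k}{k}}$ also converges at $t=\pm1$ (because $1/\binom{n+k}{k}=O(k^{-n})$), the identity then extends to $|t|\le1$ by Abel's limit theorem.

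For the computation I would abbreviate $u=\frac{t-1}{t}$ and $c_k=H_k+H_{n-k}-H_n$, and record the elementary relations $u'=\frac{1}{t^2}$, $1-u=\frac1t$, $\frac{t}{1-t}=-\frac1u$, and $\frac{1}{1-t}=-\frac{1}{tu}$; these are precisely what convert the output of differentiation into the prefactors appearing in (\ref{e:60}). Differentiating the four terms of (\ref{e:59}): the term $Li_2(t)$ contributes $-\frac{\log(1-t)}{t}$; the constant $-H_n^{(2)}$ drops out; for $\sum_{k=1}^{n}\frac1k u^k$ one uses $\frac{d}{dt}\sum_{k=1}^{n}\frac1k u^k=u'\sum_{k=1}^{n}u^{k-1}=\frac{1}{t^2}\cdot\frac{1-u^n}{1-u}=\frac{1-u^n}{t}$, and likewise $\frac{d}{dt}\sum_{k=1}^{n}\frac{c_k}{k}u^k=\frac{1}{t^2}\sum_{k=1}^{n}c_k u^{k-1}$, while the product rule on $\log(1-t)\sum_{k=1}^{n}\frac1k u^k$ produces the extra factor $-\frac{1}{1-t}$ times that sum.

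Multiplying through by $t$, the two logarithmic pieces combine as $-\log(1-t)+(1-u^n)\log(1-t)=-u^n\log(1-t)$, which is exactly the last term $-\left(\frac{t-1}{t}\right)^n\log(1-t)$ of (\ref{e:60}). The prefactor $-\frac{t}{1-t}$ on $\sum_{k=1}^{n}\frac1k u^k$ becomes, via $\frac{t}{1-t}=-\frac1u$, the term $\sum_{k=1}^{n}\frac1k u^{k-1}$; and the piece $-\frac1t\sum_{k=1}^{n}c_k u^{k-1}$ becomes $\frac{1}{1-t}\sum_{k=1}^{n}c_k u^{k}$ by $\frac{1}{1-t}=-\frac{1}{tu}$. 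Collecting these three surviving contributions reproduces (\ref{e:60}) exactly. The only real obstacle is the bookkeeping: one must track the shifts between $u^{k}$ and $u^{k-1}$ and repeatedly trade the factors $\frac{t}{1-t}$ and $\frac{1}{1-t}$ against powers of $u$ so that every term lands in the stated normalization, there being no analytic difficulty beyond the convergence justification noted above.
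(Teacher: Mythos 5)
Your proposal is correct and matches the paper's approach: the paper actually leaves the proof body of this theorem empty, but its surrounding chain of results (Theorems 2.12, 2.13, 2.15, 2.16, 2.19) obtains each successive identity by exactly this device—term-by-term differentiation of the preceding identity, here (\ref{e:59}), multiplication by $t$, and Abel's limit theorem at the endpoints—so your argument supplies precisely the intended proof. Your algebra also checks out: with $u=\frac{t-1}{t}$, the relations $1-u=\frac{1}{t}$ and $\frac{t}{1-t}=-\frac{1}{u}$ collapse the differentiated right-hand side to the three terms of (\ref{e:60}).
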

\begin{proof}
\end{proof}
\begin{thm}For $-1\leq t<1$ if $n=1$ and $|t|\leq 1$ if $n\geq 2$ we have
\begin{align}\label{e:61}
&\sum_{k=1}^{\infty}\frac{t^k}{\binom{n+k}{k}}=\frac{1}{(1-t)^2}\sum _{k=1}^n(t-k)\left(H_k+H_{n-k}-H_{n}\right)\left(\frac{t-1}{t}\right)^k\notag\\
&-\frac{1}{t}\sum _{k=0}^{n-2} \frac{1}{k+2}\left(\frac{t-1}{t}\right)^k+ \frac{2t+n\log(1-t)}{1-t}\left(\frac{t-1}{t}\right)^n+1.
\end{align}
\end{thm}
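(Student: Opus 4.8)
The plan is to derive (\ref{e:61}) as the $p=0$ member of the plain binomial family in exactly the way the other $p=0$ identities were obtained from their $p=1$ predecessors: by differentiating the already-established $p=1$ identity and multiplying by $t$. The mechanism rests on the elementary relation
\[
t\,\frac{d}{dt}\,\frac{t^k}{k\binom{n+k}{k}}=\frac{t^k}{\binom{n+k}{k}},
\]
so that, applied term-by-term,
\[
t\,\frac{d}{dt}\sum_{k=1}^{\infty}\frac{t^k}{k\binom{n+k}{k}}=\sum_{k=1}^{\infty}\frac{t^k}{\binom{n+k}{k}}.
\]
Hence the left-hand side of (\ref{e:61}) is produced simply by differentiating the right-hand side of the $p=1$ identity (\ref{e:60}) and multiplying through by $t$; no new integral or series representation is required.

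First I would justify the termwise differentiation. The power series $\sum_{k\ge 1}t^k/\binom{n+k}{k}$ has radius of convergence $1$, so on every interval $[0,t_0]$ with $0<t_0<1$ both it and its formal derivative converge uniformly; it may therefore be differentiated term-by-term on $(0,1)$, which can be justified in exactly the same way as in the proof of Theorem 2.12 (see \cite[Theorem 7.4.4(5)]{Ter}). Differentiating the closed form (\ref{e:60}) is then mechanical once one records
\[
\frac{d}{dt}\left(\frac{t-1}{t}\right)^{k}=\frac{k}{t^{2}}\left(\frac{t-1}{t}\right)^{k-1},\qquad \left(\frac{t-1}{t}\right)^{k-1}=\frac{t}{t-1}\left(\frac{t-1}{t}\right)^{k},
\]
together with $\frac{d}{dt}\log(1-t)=-1/(1-t)$ and the product rule applied to the prefactor $1/(1-t)$.

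The hard part will be the bookkeeping that reassembles the differentiated expression into the stated form. Applying the product rule to $\frac{1}{1-t}\sum_{k=1}^{n}(H_k+H_{n-k}-H_n)\bigl((t-1)/t\bigr)^{k}$ yields one piece carrying $\frac{1}{(1-t)^{2}}$ and a chain-rule piece carrying a factor $k$; after multiplying by $t$ and using the second displayed relation, these combine into the single sum $\frac{1}{(1-t)^{2}}\sum_{k=1}^{n}(t-k)(H_k+H_{n-k}-H_n)\bigl((t-1)/t\bigr)^{k}$, the first line of (\ref{e:61}). Differentiating the middle sum $\sum_{k=1}^{n}\frac1k\bigl((t-1)/t\bigr)^{k-1}$ produces a factor $(k-1)/k=1-1/k$: the $-1/k$ part, after relabelling the index, gives $-\frac1t\sum_{k=0}^{n-2}\frac{1}{k+2}\bigl((t-1)/t\bigr)^{k}$, while the leftover geometric sum from the $1$ part evaluates in closed form to $1-\bigl((t-1)/t\bigr)^{n-1}$, supplying both the constant $+1$ and a term $\frac{t}{1-t}\bigl((t-1)/t\bigr)^{n}$. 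The remaining boundary contributions come from $-\bigl((t-1)/t\bigr)^{n}\log(1-t)$, whose derivative (times $t$) contributes $\frac{n\log(1-t)+t}{1-t}\bigl((t-1)/t\bigr)^{n}$; together with the leftover $\frac{t}{1-t}\bigl((t-1)/t\bigr)^{n}$ this yields the full boundary coefficient $\frac{2t+n\log(1-t)}{1-t}$. Collecting like powers of $(t-1)/t$ then produces (\ref{e:61}); this reindexing and cancellation is lengthy but entirely routine, and I expect it to be the only real obstacle.

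Finally I would settle the range of validity. The calculation is valid for $0<t<1$. Since $\binom{n+k}{k}$ grows polynomially of degree $n$, the series on the left converges at $t=1$ precisely when $n\ge 2$ and at $t=-1$ for all $n\ge1$; Abel's limit theorem therefore extends (\ref{e:61}) to $|t|\le 1$ when $n\ge2$ and to $-1\le t<1$ when $n=1$, matching the stated hypotheses.
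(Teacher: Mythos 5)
Your proposal is correct and follows the paper's own route exactly: the paper proves (\ref{e:61}) by differentiating (\ref{e:60}) with respect to $t$ (justified as in the proof of Theorem 2.12) and multiplying by $t$, with the endpoint cases handled by Abel's limit theorem. Your working out of the bookkeeping—combining the product- and chain-rule pieces into $\frac{1}{(1-t)^2}\sum_{k=1}^{n}(t-k)(H_k+H_{n-k}-H_n)\left(\frac{t-1}{t}\right)^k$ and assembling the boundary coefficient $\frac{2t+n\log(1-t)}{1-t}$—is accurate and simply makes explicit what the paper leaves to the reader.
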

\begin{proof}
Differentiating both sides of (\ref{e:60}) with respect to $t$, which can be justified in exactly the same way with the proof of Theorem 2.12, and then multiplying the resulting identity by $t$, we complete the proof.
\end{proof}
In the next theorem we provide a generalization of identity  (\ref{e:11}).
\begin{thm}Let $n\in\mathbb{N}$ and $t\in\mathbb{R}$, which is not a negative integer. Then
\begin{equation}\label{e:62}
\psi^{(n)}(t+1)=(-1)^{n+1}n!\sum_{k=1}^{\infty}\frac{|s(k,n)|}{kk!\binom{t+k}{k}},
\end{equation}
where $s(n,k)$ are Stirling numbers of the first kind.
\end{thm}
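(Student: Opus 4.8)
The plan is to convert the series on the right of (\ref{e:62}) into a single logarithmic integral and to recognise that integral, through (\ref{e:3}), as $\psi^{(n)}(t+1)$. First I would simplify the general term. By (\ref{e:20}) with $s=t$ (valid for $t>-1$),
\begin{equation*}
\frac{1}{k\binom{t+k}{k}}=\int_{0}^{1}u^{t}(1-u)^{k-1}\,du,\qquad\text{so}\qquad \frac{1}{k\,k!\,\binom{t+k}{k}}=\frac{1}{k!}\int_{0}^{1}u^{t}(1-u)^{k-1}\,du.
\end{equation*}
Substituting this into the right-hand side of (\ref{e:62}) turns it into
\begin{equation*}
(-1)^{n+1}n!\sum_{k=1}^{\infty}\frac{|s(k,n)|}{k!}\int_{0}^{1}u^{t}(1-u)^{k-1}\,du.
\end{equation*}

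Next I would interchange summation and integration. For $t>-1$ each of $|s(k,n)|$, $u^{t}$ and $(1-u)^{k-1}$ is nonnegative on $(0,1)$, so the interchange is justified immediately by Tonelli's theorem, with no delicate uniform-convergence estimate required. Writing $(1-u)^{k-1}=(1-u)^{k}/(1-u)$ and using $|s(k,n)|=0$ for $k<n$ (see (\ref{e:9})), the inner sum is evaluated by the classical exponential generating function of the unsigned Stirling numbers of the first kind (Comtet \cite{9}),
\begin{equation*}
\sum_{k=0}^{\infty}\frac{|s(k,n)|}{k!}x^{k}=\frac{1}{n!}\bigl(-\log(1-x)\bigr)^{n}.
\end{equation*}
Taking $x=1-u$ collapses the sum to $\tfrac{1}{n!}(-\log u)^{n}$, so the whole right-hand side of (\ref{e:62}) becomes
\begin{equation*}
(-1)^{n+1}\int_{0}^{1}\frac{u^{t}\,(-\log u)^{n}}{1-u}\,du.
\end{equation*}
Here the lowest surviving power $(1-u)^{n-1}$ removes the apparent pole at $u=1$, and near $u=0$ the factor $u^{t}(-\log u)^{n}$ is integrable because $t>-1$.

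Finally I would identify this integral with $\psi^{(n)}(t+1)$ by means of (\ref{e:3}). Denoting the integration variable there by $\tau$ and substituting $u=e^{-\tau}$ (so $\tau=-\log u$, $d\tau=-du/u$, and $\tau:0\to\infty$ corresponds to $u:1\to0$) transforms
\begin{equation*}
\psi^{(n)}(t+1)=(-1)^{n+1}\int_{0}^{\infty}\frac{\tau^{n}e^{-(t+1)\tau}}{1-e^{-\tau}}\,d\tau
\end{equation*}
into exactly $(-1)^{n+1}\int_{0}^{1}u^{t}(-\log u)^{n}/(1-u)\,du$, which matches the expression above and proves (\ref{e:62}) for $t>-1$. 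As a consistency check, at $t=0$ one has $\binom{k}{k}=1$, and by (\ref{e:4}) the identity reduces to $\zeta(n+1)=\sum_{k=1}^{\infty}|s(k,n)|/(k\,k!)$, i.e. to (\ref{e:11}).

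The main issue is not the interchange of sum and integral (Tonelli disposes of it) but the domain of validity. The asymptotics $|s(k,n)|\sim (k-1)!(\log k)^{n-1}/(n-1)!$ together with $1/\bigl(k!\binom{t+k}{k}\bigr)=\Gamma(t+1)/\Gamma(t+k+1)\sim \Gamma(t+1)/(k!\,k^{t})$ show that the general term behaves like $(\log k)^{n-1}k^{-(2+t)}$, so the series converges precisely when $t>-1$. Thus $t>-1$ is the natural range in which (\ref{e:62}) holds as a convergent series; for the remaining non-integer reals $t$ the right-hand side must be read via analytic continuation, both sides being meromorphic in $t$ with poles only at the negative integers and agreeing on $t>-1$ by the identity theorem.
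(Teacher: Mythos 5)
Your proof is correct and is essentially the paper's own argument run in reverse: the paper starts from the integral representation (\ref{e:3}), substitutes $x=e^{-\tau}$ to get $\psi^{(n)}(t+1)=(-1)^{n+1}\int_0^1 x^t\log^n\bigl(1-(1-x)\bigr)/(1-x)\,dx$, expands the logarithm via the Stirling generating function, and integrates term by term using the beta function (\ref{e:5}), which is exactly the chain you traverse in the opposite direction, from the series to the integral. If anything, your write-up is tighter on the two points the paper treats loosely: Tonelli's theorem replaces the paper's defective uniform-convergence bound (its inequality $|x|^{k-1}<|u|^{k-1}$ fails for $x\in(u,1]$), and your observation that the series converges only for $t>-1$---so that the stated validity for all non-negative-integer real $t$ requires the analytic-continuation reading you supply---fixes a genuine gap that the paper covers only with a vague remark about changing variables in the case $t\le 0$.
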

\begin{proof} We assume that $t>0$. Making the change of variable $x=e^{-t}$ in (\ref{e:3}) yields for $t>-1$
\begin{equation}\label{e:63}
\psi^{(n)}(t+1)=(-1)^{n+1}\int_{0}^{1}\frac{x^t\left(\log(1-(1-x))\right)^n}{1-x}dx,
\end{equation}
Using (see \cite[pg. 351]{12})
$$
\left(\log(1-x)\right)^n=(-1)^nn!\sum_{k=1}^{\infty}\frac{|s(k,n)|x^k}{k!},\quad |x|<1,
$$
we conclude from  (\ref{e:63}) for $t>0$   and $|x|<1$
\begin{align}\label{e:64}
\psi^{(n)}(t+1)=(-1)^{n+1}n!\int_{0}^{1}\sum_{k=1}^{\infty}\frac{|s(k,n)|x^{t}(1-x)^{k-1}}{k!}dx\notag\\
=(-1)^{n+1}n!\int_{0}^{1}\sum_{k=1}^{\infty}\frac{|s(k,n)|(1-x)^{t}x^{k-1}}{k!}dx
\end{align}
Let $u\in(0,1)$. Then for all $x\in[0,u]$, we have
$$
|(1-x)^tx^{k-1}|\leq|x|^{k-1}<|u|^{k-1}.
$$
But since this is also valid for $x=0$ and $x=1$, we have, for all $x\in[0,1]$,
$$
|(1-x)^tx^{k-1}|\leq|x|^{k-1}<|u|^{k-1}.
$$
We therefore get for all $x\in[0,1]$
\begin{align*}
\sum_{k=1}^{\infty}\bigg|\frac{s(k,n)(1-x)^{t}x^{k-1}}{k!}\bigg|\leq \sum_{k=1}^{\infty}\frac{|s(k,n)|u^{k-1}}{k!}<\infty,
\end{align*}
since $|u|<1$. So, by Weierstrass $M$-test the series $\sum_{k=1}^{\infty}\frac{|s(k,n)|x^{t}(1-x)^{k-1}}{k!}$ is uniformly convergent on $[0,1]$. Hence, we can reverse the order of integration and summation in (\ref{e:64}) (see \cite[Theorem 7.2.4, p.192]{Ter}), and
\begin{align}\label{e:65}
\psi^{(n)}(t+1)=(-1)^{n+1}n!\sum_{k=1}^{\infty}\frac{|s(k,n)|}{k!}\int_{0}^{1}x^{t}(1-x)^{k-1}dx.
\end{align}
Thus, by the help of (\ref{e:5}), we get
\begin{align*}
\psi^{(n)}(t+1)&=(-)^{n+1}n!\sum_{k=0}^{\infty}\frac{|s(k,n)|}{k!}B(t+1,k)\\
&=(-)^{n+1}n!\sum_{k=0}^{\infty}\frac{|s(k,n)|}{k!}\frac{\Gamma(t+1)\Gamma(k)}{\Gamma(t+k+1)}\\
&=(-)^{n+1}n!\sum_{k=0}^{\infty}\frac{|s(k,n)|}{kk!\binom{t+k}{k}},
\end{align*}
which is the desired result for $t>0$. If $t\leq 0$, but not an integer, making the change of variable $y=e^t$ in (\ref{e:64}), we complete the proof by proceeding exactly as in the case $t>0$.
\end{proof}
\begin{rem}
Formula (\ref{e:62}) is a new series representation for the polygamma functions and it  establishes  an interesting relation between the polygamma functions and the Stirling numbers of the first kind.
\end{rem}
\section{applications}
In this section we offer some illustrative examples based on our main results.
%***************************************************************************************************Example 1
\begin{exam}Setting $t=1$ in (\ref{e:59}) gives for $n\in\mathbb{N}$
\begin{align*}
\sum_{k=1}^{\infty}\frac{1}{k^2\binom{n+k}{k}}=\zeta(2)-H_n^{(2)}.
\end{align*}
\end{exam}
%***************************************************************************************************Example 2
\begin{exam}Setting $t=1$ in (\ref{e:53}) gives for $n\in\mathbb{N}$
\begin{align*}
\sum_{k=1}^{\infty}\frac{H_{k-1}}{k^2\binom{n+k}{k}}=\zeta(3)-H_n^{(3)}.
\end{align*}
\end{exam}
%***************************************************************************************************Example 3
Setting $t=1/2$ in (\ref{e:53}) and using the identities (see \cite{15})

\begin{equation}\label{e:66}
Li_2(1/2)=\frac{\zeta(2)}{2}-\frac{\log^22}{2},
\end{equation}
and
\begin{equation}\label{e:67}
Li_3(1/2)=\frac{1}{24}\left(-12\zeta(2)\log2+4\log^32+21\zeta(3)\right)
\end{equation}
we obtain for $n\in\mathbb{N}$
\begin{exam}
\begin{align}\label{e:68}
&\sum_{k=1}^{\infty}\frac{H_{k-1}}{2^kk^2\binom{n+k}{k}}=-\frac{\log^32}{6}+\frac{\zeta(3)}{8}-\frac{\log ^22}{2}\sum _{k=1}^n \frac{(-1)^k}{k}\notag\\
&-\sum _{p=0}^{n-1} \bigg\{\sum _{k=1}^{n-p} \left(\frac{H_{k+p}-H_k}{k (k+p)}+\frac{1}{k^2 (k+p)}\right)\bigg\}(-1)^p-\log 2 \sum _{k=1}^n \frac{(-1)^k H_k}{k}.\notag\\
\end{align}
\end{exam}
For the particular value  $n=5$ in (\ref{e:68}) we get
%***************************************************************************************************Example 4
\begin{exam}
\begin{align*}
&\sum_{k=1}^{\infty}\frac{H_{k-1}}{2^kk^2\binom{k+5}{k}}=\frac{\zeta (3)}{8}-\frac{\log ^32}{6}+\frac{47 \log ^22}{120}+\frac{2869 \log 2}{3600}-\frac{179693}{216000}.
\end{align*}
\end{exam}
If we let  $n\to \infty$ in (\ref{e:53}), the left-hand side of it goes to zero. Thus, we get for $\frac{1}{2}\leq t<1$:
%***************************************************************************************************Example 5
\begin{exam}Let $n$ be a positive integer. Then
\begin{align}\label{e:69}
&\sum _{p=0}^{\infty}\sum _{k=1}^{\infty} \left(\frac{H_{k+p}-H_k}{k (k+p)}+\frac{1}{k^2 (k+p)}\right)\bigg(\frac{t-1}{t}\bigg)^p=\zeta (2) \log (1-t)\notag\\
&+Li_2\left(\frac{t-1}{t}\right) \log (1-t)-Li_3(1-t)-Li_2(t) \log (1-t)+\zeta(3)\notag\\
&-\log t \log ^2(1-t)-\frac{1}{2} \log ^2t \log (1-t).
\end{align}
\end{exam}
%***************************************************************************************************Example 6
For the particular value  $t=1/2$ of (\ref{e:69}) we get
\begin{exam}Let $n$ be a positive integer. Then
\begin{align*}
\sum _{p=0}^{\infty}\sum _{k=1}^{\infty}\bigg(\frac{H_{k+p}-H_k}{k (k+p)}+\frac{1}{k^2 (k+p)}\bigg)(-1)^p=\frac{5 \log ^32}{6}+\frac{\zeta(2)\log 2}{2}-\frac{\zeta (3)}{8}.
\end{align*}
\end{exam}
For $n\in\mathbb{N}$ and $t=1$ in (\ref{e:54}) we get after straightforward  but lengthly calculation:
%***************************************************************************************************Example 7
\begin{exam}Let $n$ be a positive integer. Then
\begin{align}\label{e:70}
&\sum_{k=1}^{\infty}\frac{H_{k-1}}{k\binom{n+k}{k}}=\frac{1}{n^2}.
\end{align}
\end{exam}
Dividing both sides of (\ref{e:70}) by $n^{m}$ and then summing the resulting equation from $n=1$ to $n=\infty$, one gets
%***************************************************************************************************Example 8
\begin{exam} For $m=0,1,2,3,..$ we have
\begin{align*}
&\zeta(m+2)=\sum_{n=1}^{\infty}\sum_{k=1}^{\infty}\frac{H_{k-1}}{kn^{m}\binom{n+k}{k}},
\end{align*}
which is a new double series representation for the Riemann zeta function $\zeta$.
\end{exam}
For $n\in\mathbb{N}$ and $t=1/2$ in (\ref{e:54}) we get after straightforward  but lengthly calculations:
%***************************************************************************************************Example 9
\begin{exam} Let $n$ be a positive integer. Then
\begin{align*}
&\sum_{k=1}^{\infty}\frac{H_{k-1}}{k2^k\binom{n+k}{k}}=\frac{\log^22}{2}-\log 2\sum_{k=1}^{n}\frac{(-1)^k}{k}+\log^22\sum_{k=1}^{n}(-1)^k\notag\\
&-\sum_{k=1}^{n}\frac{H_k(-1)^k}{k}+2\sum_{p=0}^{n-1}\bigg\{\sum_{k=1}^{n-p}\bigg(\frac{H_{p+k}-H_k}{(p+k)k}+\frac{1}{k^2(p+k)}\bigg)\bigg\}p(-1)^p\notag\\
&+2\log2\sum_{k=1}^{n}H_k(-1)^k.
\end{align*}
\end{exam}
For $n\geq 2$ and $t=1$ in (\ref{e:55}) we get after straightforward  but lengthly calculation:
%***************************************************************************************************Example 10
\begin{exam}Let $n$ be a positive integer. Then
\begin{align*}
&\sum_{k=1}^{\infty}\frac{H_{k-1}}{\binom{n+k}{k}}=\frac{2 n-1}{n (n-1)^2}.
\end{align*}
\end{exam}
Setting $t=1$ in (\ref{e:56}) and using the identities (\ref{e:66}) and (\ref{e:67}) we get for $n\in\mathbb{N}$
%***************************************************************************************************Example 11
\begin{exam}
\begin{align}\label{e:71}
&\sum_{k=1}^{\infty}\frac{(-1)^kH_{k-1}}{k^2\binom{n+k}{k}}=\frac{\zeta(3)}{8}+\log2\sum_{k=1}^{n}\frac{H_k2^k}{k}-\frac{\log^22}{2}\sum_{k=1}^{n}\frac{2^k}{k}\notag\\
&-\sum_{p=0}^{n-1}\bigg\{\sum_{k=1}^{n-p}\bigg(\frac{H_{p+k}-H_k}{(p+k)k}+\frac{1}{k^2(p+k)}\bigg)\bigg\}2^p.
\end{align}
\end{exam}
%***************************************************************************************************Example 12
\begin{exam} For $n=5$ in (\ref{e:71}) we arrive at the following result:
\begin{align*}
&\sum_{k=1}^{\infty}\frac{(-1)^kH_{k-1}}{k^2\binom{k+5}{k}}=\frac{\zeta (3)}{8}-\frac{4060943}{216000}-\frac{128\log ^22}{15}+\frac{7388 \log 2}{225}.
\end{align*}
\end{exam}
%***************************************************************************************************Example 13
\begin{exam} For $n\in\mathbb{N}$ and $t=1/2$ in (\ref{e:56}) we get
\begin{align*}
&\sum_{k=1}^{\infty}\frac{(-1)^kH_{k-1}}{k^22^k\binom{n+k}{k}}=-\frac{\log2\log^2(3/2)}{2}-\frac{\log^3(3/2)}{3}\notag\\
&-\log(3/2)Li_2(2/3)+\zeta(3)-Li_3(2/3)-\frac{\log^2(3/2)}{2}\sum_{k=1}^{n}\frac{3^k}{k}\notag\\
&-\sum_{p=0}^{n-1}\bigg\{\sum_{k=1}^{n-p}\bigg(\frac{H_{p+k}-H_k}{(p+k)k}+\frac{1}{k^2(p+k)}\bigg)\bigg\}3^p.
\end{align*}
\end{exam}
%***************************************************************************************************Example 14
\begin{exam} For $n\in\mathbb{N}$ and $t=1$ in (\ref{e:57}) we find
\begin{align}\label{e:72}
&\sum_{k=1}^{\infty}\frac{(-1)^kH_{k-1}}{k\binom{n+k}{k}}=\frac{\log^22}{2}-\frac{\log 2}{2}\sum_{k=1}^{n}\frac{2^k}{k}+\frac{\log^22}{4}\sum_{k=1}^{n}2^k\notag\\
&+\frac{1}{2}\sum_{k=1}^{n}\frac{H_k2^k}{k}+\frac{1}{2}\sum_{p=0}^{n-1}\bigg\{\sum_{k=1}^{n-p}\bigg(\frac{H_{p+k}-H_k}{(p+k)k}+\frac{1}{k^2(p+k)}\bigg)\bigg\}p2^p\notag\\
&-\frac{\log2}{2}\sum_{k=1}^{n}H_k2^k.
\end{align}
\end{exam}
%***************************************************************************************************Example 15
\begin{exam} Setting $n=5$ in (\ref{e:72})
\begin{align*}
&\sum_{k=1}^{\infty}\frac{(-1)^kH_{k-1}}{k\binom{k+5}{k}}=16 \log ^22-\frac{1096 \log 2}{15}+\frac{77357}{1800}.
\end{align*}
\end{exam}
%***************************************************************************************************Example 16
\begin{exam} For $n\in\mathbb{N}$ and $t=1/2$ in (\ref{e:57}) we obtain
\begin{align*}
&\sum_{k=1}^{\infty}\frac{(-1)^kH_{k-1}}{k2^k\binom{n+k}{k}}=\frac{\log^2(3/2)}{2}+\frac{1}{3}\sum_{k=1}^{n}\frac{H_k3^k}{k}\notag\\
&-\frac{\log(3/2)}{3}\sum_{k=1}^{n}\frac{3^k}{k}+\frac{\log^2(3/2)}{3}\sum_{k=1}^{n}3^k-\frac{2\log(3/2)}{3}\sum_{k=1}^{n}H_k3^k\notag\\
&+\frac{2}{3}\sum_{p=0}^{n-1}\bigg\{\sum_{k=1}^{n-p}\bigg(\frac{H_{p+k}-H_k}{(p+k)k}+\frac{1}{k^2(p+k)}\bigg)\bigg\}3^p.
\end{align*}
\end{exam}
%***************************************************************************************************Example 17
\begin{exam} For $n\in\mathbb{N}$ and $t=1$ in (\ref{e:58}) we get
\begin{align*}
&\sum_{k=1}^{\infty}\frac{(-1)^kH_{k-1}}{\binom{n+k}{k}}=\frac{1}{4}\sum_{k=1}^{n}\frac{H_k2^k}{k}-\frac{1+\log2}{4}\sum_{k=1}^{n}\frac{2^k}{k}-\frac{\log^22}{8}\sum_{k=1}^{n}k2^k\notag\\
&+\frac{\log2}{2}-\frac{1}{4}\sum_{p=1}^{n-1}\bigg\{\sum_{k=1}^{n-p}\bigg(\frac{H_{p+k}-H_k}{(p+k)k}+\frac{1}{k^2(p+k)}\bigg)\bigg\}(p+p^2)2^p\notag\\
&-\frac{2-\log2}{4}\sum_{k=1}^{n}H_k2^k+\frac{\log2}{4}\sum_{k=1}^{n}kH_k2^k+\frac{4\log2-\log^22}{8}\sum_{k=1}^{n}2^k.
\end{align*}
\end{exam}
%***************************************************************************************************Example 18
\begin{exam} For $n=2,3,4,...$ and $t=1$ in (\ref{e:54}) and (\ref{e:55}) we get, respectively
\begin{align*}
&\sum_{k=1}^{\infty}\frac{1}{k\binom{n+k}{k}}=\frac{1}{n}\quad\mbox{}and\quad \sum_{k=1}^{\infty}\frac{1}{\binom{n+k}{k}}=\frac{1}{n-1}
\end{align*}
\end{exam}
%***************************************************************************************************Example 19
\begin{exam} For $n\in\mathbb{N}$  and $t=-1$ in (\ref{e:60})  we get
\begin{align*}
&\sum_{k=1}^{\infty}\frac{(-1)^k}{k\binom{n+k}{k}}=\frac{1}{2}\sum_{k=1}^{n}\{H_k+H_{n-k}-H_n\}2^k+\sum_{k=1}^{n}\frac{2^{k-1}}{k}-2^n\log2.
\end{align*}
\end{exam}
%***************************************************************************************************Example 20
\begin{exam} For $n\in\mathbb{N}$  and $t=-1$ in (\ref{e:61})  we get
\begin{align*}
\sum_{k=1}^{\infty}\frac{(-1)^k}{\binom{n+k}{k}}&=-\frac{1}{4}\sum _{k=1}^n(k+1)\left(H_k+H_{n-k}-H_{n}\right)2^k\notag\\
&+\sum _{k=0}^{n-2} \frac{2^k}{k+2}+(n\log2-2)2^{n-1}+1.
\end{align*}
\end{exam}
\begin{rem}
Using the simple relation $H_k=H_{k-1}+\frac{1}{k}$, and Theorems 2.11-2.19 it is possible to give closed form identities for the sums
$$
\sum_{k=1}^{\infty}\frac{H_kt^k}{k^p\binom{n+k}{k}}
$$
for $p=0$ and $1$, $n\in\mathbb{N}$, and $|t|\leq 1$.
\end{rem}
%***************************************************************************************************Example 21
\begin{exam} For $n=2$ and $t=0$ in (\ref{e:63}), we have by (\ref{e:3}) and (\ref{e:4})
\begin{equation*}
\zeta(3)=\sum_{k=1}^{\infty}\frac{H_{k-1}}{k^2}.
\end{equation*}
\end{exam}
\begin{rem}This identity was discovered by Euler and has a long history; see \cite[p. 252]{6}, and also \cite{8}.
\end{rem}
%***************************************************************************************************Example 22
\begin{exam} For $n=1$ and $t=1/2$ in (\ref{e:62}), we have by (\ref{e:1}) and (\ref{e:6})
\begin{equation*}
\zeta(2)=\frac{4}{3}+\frac{1}{3}\sum_{k=1}^{\infty}\frac{4^k}{k^2(2k+1)\binom{2k}{k}}.
\end{equation*}
\end{exam}
%***************************************************************************************************Example 23
\begin{exam} For $n=1$ and $t=-1/2$ in (\ref{e:62}), we have by (\ref{e:3}) and (\ref{e:4})
\begin{equation*}
\zeta(2)=\frac{1}{3}\sum_{k=1}^{\infty}\frac{4^k}{k^2\binom{2k}{k}}.%DOĞRU
\end{equation*}
\end{exam}
%***************************************************************************************************Example 24
\begin{exam} For $n=2$ and $t=-1/2$ in (\ref{e:62}), we have by (\ref{e:1}) and (\ref{e:6})
\begin{equation*}
\zeta(3)=\frac{1}{7}\sum_{k=1}^{\infty}\frac{H_{k-1}4^k}{k^2\binom{2k}{k}},
\end{equation*}
which is a new series representation for the Ap\`{e}ry constant $\zeta(3)$.
\end{exam}
%***************************************************************************************************Example 25
\begin{exam} For $n=2$ and $t=1/2$ in (\ref{e:62}), we have by (\ref{e:1}) and (\ref{e:6})
\begin{equation*}
\zeta(3)=\frac{8}{7}+\frac{1}{7}\sum_{k=1}^{\infty}\frac{H_{k-1}4^k}{k^2(2k+1)\binom{2k}{k}}.
\end{equation*}
\end{exam}
%***************************************************************************************************Example 26
\begin{exam} For $n=3$ and $t=-1/2$ in (\ref{e:62}), we have by (\ref{e:1}) and (\ref{e:6})
\begin{equation*}
\zeta(4)=\frac{1}{30}\sum_{k=1}^{\infty}\frac{\left(H_{k-1}^2-H_{k-1}^{(2)}\right)4^k}{k^2\binom{2k}{k}}.
\end{equation*}
\end{exam}
%***************************************************************************************************Example 27
\begin{exam} For $n=3$ and $t=0$ in (\ref{e:62}), we have by (\ref{e:3}) and (\ref{e:4})
\begin{equation*}
\zeta(4)=\frac{1}{2}\sum_{k=1}^{\infty}\frac{H_{k-1}^2-H_{k-1}^{(2)}}{k^2}.%DOĞRU
\end{equation*}
\end{exam}
%***************************************************************************************************Example 28
\begin{exam} For $n=3$ and $t=1/2$ in (\ref{e:62}), we have by (\ref{e:1}) and (\ref{e:6})
\begin{equation*}
\zeta(4)=\frac{16}{15}+\frac{1}{30}\sum_{k=1}^{\infty}\frac{\left(H_{k-1}^2-H_{k-1}^{(2)}\right)4^k}{k^2(2k+1)\binom{2k}{k}}.%DOĞRU
\end{equation*}
\end{exam}
%***************************************************************************************************Example 29
\begin{exam} For $n=4$ and $t=0$ in (\ref{e:62}), we have by (\ref{e:3}) and (\ref{e:4})
\begin{equation*}
\zeta(5)=\frac{1}{6}\sum_{k=1}^{\infty}\frac{H_{k-1}^3-3H_{k-1}H_{k-1}^{(2)}+2H_{k-1}^{(3)}}{k^2}.
\end{equation*}
\end{exam}
%***************************************************************************************************Example 30
\begin{exam} For $n=4$ and $t=-1/2$ in (\ref{e:62}), we have by (\ref{e:1}) and (\ref{e:6})
\begin{equation*}
\zeta(5)=\frac{1}{186}\sum_{k=1}^{\infty}\frac{\left(H_{k-1}^3-3H_{k-1}H_{k-1}^{(2)}+2H_{k-1}^{(3)}\right)4^k}{k^2\binom{2k}{k}}.
\end{equation*}
\end{exam}
\begin{rem}
Examples 21, 27 and 29 are not new and they can be deduced from formula (\ref{e:11}), but it seems to us that Examples 22-26, 28 and 30 are new.
\end{rem}
\textbf{\large{Acknowledgments.}} We are grateful to the anonymous referees for their constructive suggestions.
\bibliographystyle{amsplain}

\begin{thebibliography}{1}
\bibitem{1}V. Adamchik, On Stirling numbers and Euler sums, J. Comput. Appl. Math., 79(1997), 119--130.
\bibitem{2} H. Alzer, A fundamental inequality for the polygamma functions, Bull. Austral. Math. Soc 72, 2005, 455--459.
\bibitem{3} S. Amghibech, On sums involving binomial coefficients., J. Integer Sequences, 10 (2007),
Article 07.2.1.
\bibitem{4} M. R. Andrew, Sums of the inverses of binomial coefficients., Fib. Quart. 19 (1981),
433-437.
\bibitem{5} N. Batir, Remarks on V\u{a}lean's master theorem of series, J. Class. Anal., 1(23), 2018, 79--82.
\bibitem{6} B. C. Bendt, Ramanujan's Notebooks, Part I, Springer-Verlag, Heidelberg, 1985.
\bibitem{7} I. V. Blagouchine, Two series expansions for the logarithm of the gamma function involving Stirling numbers and containing only
rational coefficients for certain arguments related to $\pi^{-1}$, J. Math. Anal., 442, 2016, 404--434.
\bibitem{8} W.E. Briggs, S. Chowle, A.J. Kempner and W.E. Mientka, On some infinite series, Scripta Math., 21(1955), 28--30.
\bibitem{9} L. Comtet, Advanced Combinatorics, R. Reidel Publishing Company, 1974.
\bibitem{Dag} M. C. Da\v{g}l{\i}, Closed formulas and determinental expressions for higher-order Bernoulli and Euler polynomials in terms of Stirling numbers, Rev. R. Acad. Cienc. Exactas Fis. Nat. Ses. A Mat. RACSAM 115(1) 2021, Paper No. 32.
\bibitem{10} P. J. De Doelder, On some series containing  $\psi(x)-\psi(y)$ and $(\psi(x)-\psi(y))^2$ for certain values of $x$ and $y$, J. Comput. Appl. Math., 37 (1991), 125--141.
\bibitem{11} P. Duren, Invitation to Classical Analysis, Amer. Math. Soc., 2012.
\bibitem{12} R. L. Graham, D. E. Knuth, and O. Patashnik, Concrete Mathematics, 2nd Ed., Addison-Wesley, New York, 1994.
\bibitem{Gun}D. Gun and Y. \c{S}im\c{s}ek, Some new identities and inequalities for Bernoulli polynomials and numbers of higher order related to the Stirling and Catalan numbers, Rev. R. Acad. Cienc. Exactas Fis. Nat. Ses. A Mat. RACSAM 114(4) 2020, Paper No. 167.
\bibitem{13} M. Hassani and S. Rahimpour, $L$-summing method, RGMIA, Research Report Collection, 7 (4), 2004, Article 10.
\bibitem{14} C. Jordan, The Calculus of Finite Differences, Chelsa Publishing Company, USA, 1947.
\bibitem{15} L. Lewin, Polylogarithms and Associated Functions, Elsevier, 1981.
\bibitem{Kim} T. Kim, D. S. Kim, Extended Stirling numbers of the first kind associated with Daehee numbers and polynomials, Rev. R. Acad. Cienc. Exactas Fis. Nat. Ses. A Mat. RACSAM 113(2) 2019, 1159--1171.
\bibitem{16} H. Sato, On a relation between the Riemann zeta function and the Stirling numbers, Integers, 2008, \#A53.
\bibitem{17} L. C. Shen, Remarks on some integrals and series involving the Stirling numbers and $\zeta(n)$, Trans. Amer. Math. Soc., 4(347), 1995, 1391--1399.
\bibitem{18} A. Sofo, New families of alternating harmonic number sums, Tbilisi Math. J., 8(2),2005, 195--209.
\bibitem{19} A. Sofo, Identities for alternating inverse squared binomial and harmonic number sums, Mediterr. J. Math., 13(2006), 1407--1418.
\bibitem{20} A. Sofo, Quadratic harmonic number sums, J. Number Theory, 154(2015), 144-159.
\bibitem{21} A. Sofo, Harmonic numbers and double binomial coefficients, Integral Transforms Spec. Funct., 20(11), 2009, 847--857.
\bibitem{22} A. Sofo, Harmonic number sums in closed form, Math. Commun., 16(2011), 335--345.
\bibitem{23} A. Sofo, Summation formulas involving harmonic numbers, Analysis Mathematica, 37(2011), 51-54.
\bibitem{24} A. Sofo, Second order alternating harmonic number sums, Filomat, 30(13), 2016, 3551--3524.
\bibitem{25} A. Sofo, harmonic numbers of order two, Miscolc. Math. Notes, 13(2012), no.2, 499--514.
\bibitem{26} A. Sofo, and H. M. Srivastava, A family of shifted harmonic sums, Ramanujan J., 37(2015), 89--108.
\bibitem{27} A. Sofo, Closed form representations  of harmonic numbers, Hacet. J. Math. Statis., 39(2), 2010, 255--263.
\bibitem{28} A. Sofo, General properties involving reciprocals of binomial coefficients, J. Integer Sequences, 9(2006), Article 06.4.5.
\bibitem{29} J. Sondow and E. W. Weisstein, Harmonic number, From Math World: A WolframWeb resources;
https://mathworld.wolfram.com/HarmonicNumber.html.
\bibitem{30} B. Sury, T. Wang and F-Z. Zhao, Identities Involving Reciprocals of Binomial Coefficients, J. Integer Sequences, Vol. 7
(2004), Article 04.2.8
\bibitem{31} B. Sury, Sums of the reciprocal binomial coefficients, European J. Combin., 14 (19993), 351--353.
\bibitem{Ter} W. J. Terrell, A Passage to Modern Analysis, Amer. Math. Soc., Providence, Rhode Island, 2019.
\bibitem{32} C. I. V\v{a}lean, A master theorem of series and evaluation of cubic harmonic series, J. Class. Anal., 2(10), 2017, 97--107.
\bibitem{33} C. Xu, Identities for the shifted harmonic numbers and binomial coefficients, Filomat, 31(19), 2017, 6071--6086.
\bibitem{34} J.H. Yang and F. Z. Zhao, Certain sums involving inverses of binomial coefficients and some integrals, J. Integer Sequences, 10
(2007), Article 07.8.7.
\bibitem{35} F. Zhao and T. Wang, Some results for sums of the inverse binomial coefficients, Integer, 5(11), 2005, $\#22$.
\end{thebibliography}

\end{document}